\DeclarePairedDelimiter{\ceil}{\lceil}{\rceil}
\DeclarePairedDelimiter{\floor}{\lfloor}{\rfloor}
\numberwithin{equation}{section}
\theoremstyle{plain}
\newtheorem{theorem}{Theorem}[section]
\newtheorem{lemma}[theorem]{Lemma}
\newtheorem{corollary}[theorem]{Corollary}
\theoremstyle{definition}
\newtheorem{definition}[theorem]{Definition}
\newtheorem{remark}[theorem]{Remark}
\newcommand\R{\mathbb R}
\newcommand\M{\mathbb M}
\newcommand\N{\mathbb N}
\newcommand\Z{\mathbb{Z}}
\newcommand\iO{\int_\Omega}
\newcommand{\iOQ}{\int_{\Omega}\int_Q}
\newcommand\iQ{\int_Q}
\newcommand\ep{\varepsilon}
\newcommand{\dist}{\mathrm{dist}}
\newcommand\wk{\rightharpoonup}
\newcommand\lrightharpoonup{-\hspace{-0.2cm}\rightharpoonup}
\newcommand\swkts{\overset{2-s}{\lrightharpoonup}}
\newcommand\oscz{\overset{osc, Z}{\lrightharpoonup}}
\newcommand\oscy{\overset{osc, Y}{\lrightharpoonup}}
\newcommand\iv{\big(-\tfrac12,\tfrac12\big)}
\newcommand\wks{\overset{dr-3-s}{\lrightharpoonup}}
\newcommand\stts{\overset{dr-3-s}{-\hspace{-0.2cm}\to}}
\newcommand\swks{\overset{3-s}{\lrightharpoonup}}
\newcommand\sstts{\overset{3-s}{-\hspace{-0.2cm}\to}}
\newcommand{\mthree}{\M^{3\times 3}}
\newcommand\nh{\nabla_h}
\newcommand\intoyz[1]{\iO\int_Q\int_Q#1\,dz\,dy\,dx}
\newcommand\intoy[1]{\iO\int_Q #1\,dy\,dx}
\newcommand\tv{\tilde{v}}
\newcommand\ttv{\tilde{\tilde{v}}}
\newcommand\intomyz[1]{\int_{\omega}\int_Q\int_Q #1\,dz\,dy\,dx'}
\newcommand\np{(\nabla')^{\perp}}
\newcommand\npp{\nabla^{\perp}}
\newcommand\ze{\Z^{\ep}}
\newcommand\zhe{\hat{\Z}^{\ep}}
\newcommand\zeb{\Z^{\ep}_b}
\newcommand\zeg{\Z^{\ep}_g}
\newcommand\szeb{\sum_{\lambda\in\zeb}}
\newcommand\sle{\sum_{\lambda\in (\zeb\cup\zeg)}}
\newcommand\slet{\sum_{\lambda\in \zhe}}
\newcommand\szeg{\sum_{\lambda\in\zeg}}
\newcommand\qel{Q(\ep^2(h)\lambda,\ep^2(h))}
\newcommand\be[1]{\begin{equation}\label{#1}}
\newcommand\ee{\end{equation}}
\newcommand\bm[1]{\begin{align}\label{#1}}
\newcommand\bmm{\begin{align*}}
\title [Multiscale homogenization in Kirchhoff's nonlinear plate theory]
{Multiscale homogenization in Kirchhoff's nonlinear plate theory}
\author[L. Bufford] {Laura Bufford} 
\address[Laura Bufford]{Department of Mathematics\\ Carnegie Mellon University\\Forbes Avenue\\Pittsburgh PA 15213}
\email[L. Bufford] {lbufford@andrew.cmu.edu} 
\author[E. Davoli] {Elisa Davoli} 
\address[Elisa Davoli]{Department of Mathematics\\ Carnegie Mellon University\\Forbes Avenue\\Pittsburgh PA 15213}
\email[E. Davoli]{edavoli@andrew.cmu.edu}
\author[I. Fonseca] {Irene Fonseca} 
\address[Irene Fonseca]{Department of Mathematics\\ Carnegie Mellon University\\Forbes Avenue\\Pittsburgh PA 15213}
\email[I. Fonseca]{fonseca@andrew.cmu.edu}
\subjclass[2010]{35B27,49J45, 74B20, 74E30\\
\phantom{ab\,}\emph{Keywords}: dimension reduction, homogenization, Kirchhoff's nonlinear plate theory, nonlinear elasticity, multiscale convergence}
\begin{document} 
\vskip .2truecm
\begin{abstract}
\small{
The interplay between multiscale homogenization and dimension reduction for nonlinear elastic thin plates is analyzed in the case in which the scaling of the energy corresponds to Kirchhoff's nonlinear bending theory for plates. Different limit models are deduced depending on the relative ratio between the thickness parameter $h$ and the two homogenization scales $\ep$ and $\ep^2$.}
\end{abstract}
\maketitle
\section{Introduction}
The search for lower dimensional models describing thin three-dimensional structures is a classical problem in mechanics of materials. Since the early '90s it has been tackled successfully by means of variational tecniques, and starting from the seminal papers \cite{acerbi.buttazzo.percivale, friesecke.james.muller, friesecke.james.muller2, ledret.raoult} hierarchies of limit models have been deduced by $\Gamma$-convergence, depending on the scaling of the elastic energy with respect to the thickness parameter. 

The first homogenization results in nonlinear elasticity have been proved in \cite{braides} and \cite{muller}. In these two papers, A. Braides and S. M\"uller assume p-growth of a stored energy density $W$ that oscillates periodically in the in-plane direction. They show that as the periodicity scale goes to zero, the elastic energy $W$ converges to a homogenized energy, whose density is obtained by means of an infinite-cell homogenization formula. 

In \cite{babadjian.baia, braides.fonseca.francfort} the authors treat simultaneously homogenization and dimension reduction for thin plates, in the membrane regime and under p-growth assumptions of the stored energy density. More recently, in \cite{hornung.neukamm.velcic}, \cite{neukamm.velcic}, and \cite{velcic} models for homogenized plates have been derived under physical growth conditions for the energy density. We briefly describe these results. 

Let
$$\Omega_h:=\omega\times(-\tfrac h2,\tfrac h2)$$
be the reference configuration of a nonlinearly elastic thin plate, where $\omega$ is a bounded domain in $\R^2$, and $h>0$ is the thickness parameter. Assume that the physical structure of the plate is such that an in-plane homogeneity scale $\ep(h)$ arises, where $\{h\}$ and $\{\ep(h)\}$ are monotone decreasing sequences of positive numbers,  $h\to 0$, and $\ep(h)\to 0$ as $h\to 0$. In \cite{hornung.neukamm.velcic, neukamm.velcic, velcic} the rescaled nonlinear elastic energy associated to a deformation $v\in W^{1,2}(\Omega_h;\R^3)$ is given by
$$\mathcal{I}^h(v):=\frac{1}{h}\int_{\Omega_h}W\Big(\frac{x'}{\ep(h)},\nabla v(x)\Big)\,dx,$$
where $x':=(x_1,x_2)\in\omega$, and the stored energy density $W$ is periodic in its first argument and satisfies the commonly adopted assumptions in nonlinear elasticity, as well as a nondegeneracy condition in a neighborhood of the set of proper rotations.

In \cite{neukamm.velcic} the authors focus on the scaling of the energy corresponding to Von K\'arm\'an plate theory, that is they consider deformations $v^h\in W^{1,2}(\Omega_h;\R^3)$ such that
$$\limsup_{h\to 0}\frac{\mathcal{I}^h(v^h)}{h^4}<+\infty.$$
Under the assumption that the limit
$$\gamma_1:=\lim_{h\to 0}\frac{h}{\ep(h)}$$
exists, different homogenized limit models are identified, depending on the value of $\gamma_1\in[0,+\infty]$.

 A parallel analysis is carried in \cite{hornung.neukamm.velcic}, where the scaling of the energy associated to Kirchhoff's plate theory is studied, i.e., the deformations under consideration satisfy
 $$\limsup_{h\to 0}\frac{\mathcal{I}^h(v^h)}{h^2}<+\infty.$$
 In this situation a lack of compactness occurs when $\gamma_1=0$ (the periodicity scale tends to zero much more slowly than the thickness parameter). A partial solution to this problem, in the case in which
$$\gamma_2:=\lim_{h\to 0}\frac{h}{\ep^2(h)}=+\infty,$$ 
is proposed in \cite{velcic}, by means of a careful application of Friesecke, James and M\"uller's quantitative rigidity estimate, and a construction of piecewise constant rotations (see \cite[Theorem 4.1]{friesecke.james.muller} and \cite[Theorem 6]{friesecke.james.muller2} and \cite[Lemma 3.11]{velcic}). The analysis of simultaneous homogenization and dimension reduction for Kirchhoff's plate theory in the remaining regimes is still an open problem.\\

In this paper we deduce a multiscale version of the results in \cite{hornung.neukamm.velcic} and \cite{velcic}. We focus on the scaling of the energy which corresponds to Kirchhoff's plate theory, and we assume that the plate undergoes the action of two homogeneity scales - a coarser one and a finer one - i.e.,
the rescaled nonlinear elastic energy is given by
$$\mathcal{J}^h(v):=\frac{1}{h}\int_{\Omega_h}W\Big(\frac{x'}{\ep(h)},\frac{x'}{\ep^2(h)},\nabla v(x)\Big)\,dx$$
for every deformation $v\in W^{1,2}(\Omega_h;\R^3)$, where the stored energy density $W$ is periodic in its first two arguments and, again, satisfies the usual assumptions in nonlinear elasticity, as well as the nondegeneracy condition (see Section \ref{section:setting}) adopted in \cite{hornung.neukamm.velcic, neukamm.velcic, velcic}. We consider sequences of deformations $\{v^h\}\subset W^{1,2}(\Omega_{h};\R^3)$ verifying
\be{eq:intro-en-est}\limsup_{h\to 0}\frac{\mathcal{J}^h(v^h)}{h^2}<+\infty,\ee
and we seek to identify the effective energy associated to the rescaled elastic energies $\big\{\frac{\mathcal{J}^h(v^h)}{h^2}\big\}$ for different values of $\gamma_1$ and $\gamma_2$, i.e. depending on the interaction of the homogeneity scales with the thickness parameter.

As in \cite{hornung.neukamm.velcic}, a sequence of deformations satisfying \eqref{eq:intro-en-est} converges, up to the extraction of a subsequence, to a limit deformation $u\in W^{1,2}(\omega;\R^3)$ satisfying the isometric constraint
\be{eq:intro-normal1}\partial_{x_\alpha} u(x')\cdot\partial_{x_\beta} u(x')=\delta_{\alpha,\beta}\quad\text{for a.e. }x'\in\omega,\quad\,\alpha,\beta\in\{1,2\}. \ee
We will prove that the effective energy is given by
$$\mathcal{E}^{\gamma_1}(u):=\begin{cases}\frac{1}{12}\int_{\omega}\overline{\mathscr{Q}}_{\rm hom}^{\gamma_1}(\Pi^u(x'))\,dx'&\text{if }u\text{ satisfies }\eqref{eq:intro-normal1},\\
+\infty&\text{otherwise},\end{cases}$$
where $\Pi^u$ is the second fundamental form associated to $u$ (see \eqref{eq:def-Pi}), and $\overline{\mathscr{Q}}_{\rm hom}^{\gamma_1}$ is a quadratic form dependent on the value of $\gamma_1$, with explicit characterization provided in \eqref{eq:def-q-hom-bar-gamma1-infty}--\eqref{eq:def-q-hom-gamma1-infty}.
To be precise, our main result is the following.
\begin{theorem}
\label{thm:main-result}
Let $\gamma_1\in [0,+\infty]$ and let $\gamma_2=+\infty$. Let $\{v^h\}\subset W^{1,2}(\Omega_h;\R^3)$ be a sequence of deformations satisfying the uniform energy estimate \eqref{eq:intro-en-est}. There exists a map $u\in W^{2,2}(\omega;\R^3)$ verifying \eqref{eq:intro-normal1} such that, up to the extraction of a (not relabeled) subsequence, there holds
\begin{align}
\nonumber v^h(x',hx_3)-\fint_{\Omega_1} v^h(x',hx_3)\,dx\to u\quad\text{strongly in }L^2(\Omega_1;\R^3),\\
\nonumber \nh v^h(x',hx_3)\to (\nabla'u|n_u)\quad\text{strongly in }L^2(\Omega_1;\mthree),
\end{align}
with 
$$n_u(x'):=\partial_{x_1}u(x')\wedge\partial_{x_2}u(x')\quad\text{for a.e. }x'\in\omega,$$
and
\be{eq:intro-liminf}\liminf_{h\to 0}\frac{\mathcal{J}^h(v^h)}{h^2}\geq \mathcal{E}^{\gamma_1}(u).\ee
Moreover, for every $u\in W^{2,2}(\omega;\R^3)$ satisfying \eqref{eq:intro-normal1}, there exists a sequence $\{v^h\}\subset W^{1,2}(\Omega_h;\R^3)$ such that
\be{eq:intro-limsup}\limsup_{h\to 0}\frac{\mathcal{J}^h(v^h)}{h^2}\leq \mathcal{E}^{\gamma_1}(u).\ee
\end{theorem}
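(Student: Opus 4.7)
The plan is to follow the standard three-step $\Gamma$-convergence strategy for Kirchhoff-type dimension reduction as in \cite{hornung.neukamm.velcic, velcic}, coupled with three-scale convergence tools (in the sense of Allaire--Briane) to handle the two oscillation scales $\ep$ and $\ep^2$ simultaneously.

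For the compactness part, I would apply the Friesecke--James--M\"uller quantitative rigidity estimate on a grid of squares of a suitably chosen intermediate scale $\delta(h)$, exploiting $\gamma_2=+\infty$ as in \cite[Lemma 3.11]{velcic}, to construct a piecewise constant rotation field $R^h:\omega\to SO(3)$ with $\|\nh v^h - R^h\|_{L^2(\Omega_1;\mthree)}\lesssim h$. A discrete-gradient argument à la Velčić then yields $R^h\to R=(\nabla' u\,|\,n_u)$ strongly in $L^2$, for some $u\in W^{2,2}(\omega;\R^3)$ satisfying the isometric constraint \eqref{eq:intro-normal1}; the claimed strong convergences of $v^h$ and $\nh v^h$ follow from Poincaré--Wirtinger together with the identification of $R$.

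For the liminf inequality, I would set $G^h:=h^{-1}\big((R^h)^T\nh v^h-I\big)\in L^2(\Omega_1;\mthree)$, which is bounded by construction. The nondegeneracy of $W$ near $SO(3)$ together with a Lipschitz truncation argument allows one to replace $W$ by its quadratic Taylor expansion $\mathscr{Q}(y_1,y_2,F)=\tfrac12 F^T D^2W(y_1,y_2,I)F$. I would extract a three-scale limit $G(x,y_1,y_2)$ of $G^h$, prove that its $x_3$-dependence is affine with slope encoding $\Pi^u$ (modulo skew-symmetric corrections forced by the tangency to $SO(3)$), and decompose $G=x_3\,\Pi^u+\text{correctors at scales }\ep,\ep^2$. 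Weak lower semicontinuity of $\int_\Omega\int_Q\int_Q\mathscr{Q}(y_1,y_2,G)\,dy_2\,dy_1\,dx$ with respect to three-scale convergence, followed by minimization over the admissible correctors, recovers exactly the cell-problem formulas \eqref{eq:def-q-hom-bar-gamma1-infty}--\eqref{eq:def-q-hom-gamma1-infty} defining $\overline{\mathscr{Q}}_{\rm hom}^{\gamma_1}$. The three regimes $\gamma_1=0$, $\gamma_1\in(0,+\infty)$, $\gamma_1=+\infty$ correspond to distinct couplings between the $x_3$ direction and the $y_1$-oscillations in the cell problem.

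For the limsup, starting from a smooth isometric $u$ I would take
$$v^h(x',hx_3)=u(x')+hx_3\,n_u(x')+h^2\psi^h(x',x_3),$$
where $\psi^h$ is built from the optimal correctors realizing the cell problems for $\overline{\mathscr{Q}}_{\rm hom}^{\gamma_1}$, regularized and truncated so that $(\nh v^h)^T\nh v^h=I+o(h)$. A Taylor expansion of $W$ around $I$ then matches the rescaled energy to $\frac{1}{12}\int_\omega\overline{\mathscr{Q}}_{\rm hom}^{\gamma_1}(\Pi^u(x'))\,dx'$, and Pakzad's density theorem for $W^{2,2}$-isometric immersions combined with a diagonal argument extends the construction to arbitrary $u\in W^{2,2}(\omega;\R^3)$ satisfying \eqref{eq:intro-normal1}. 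The principal obstacle will be the identification of the three-scale limit of $G^h$ in the intermediate regime $\gamma_1\in(0,+\infty)$, where the coarse oscillation scale $\ep$ is commensurate with the plate thickness and forces a genuinely joint cell problem in the $(y_1,x_3)$ variables rather than the decoupled problems arising at $\gamma_1\in\{0,+\infty\}$; constructing a recovery sequence that achieves the sharp bound in that regime while preserving near-isometry will in turn require careful cutoff and gluing of the $\ep$- and $\ep^2$-correctors on scales adapted to $h$.
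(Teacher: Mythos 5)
Your overall blueprint—FJM rigidity, piecewise constant rotation fields, three-scale convergence of the linearized strain $G^h=h^{-1}((R^h)^T\nabla_h v^h-\mathrm{Id})$, Taylor expansion of $W$ around the identity, lower semicontinuity, and a recovery sequence built from optimal correctors followed by a density argument—matches the paper's strategy (Theorems \ref{thm:compactness-def}, \ref{thm:limit stresses}, \ref{thm:liminf}, \ref{thm:limsup}). However, there are two concrete gaps where the proposal glosses over the actual work.

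First, the crucial step in identifying the limit stress is not the extraction of a three-scale limit of $G^h$ (which is routine) but the identification of its structure, and this fails by standard three-scale theory alone. The difficulty is a scale mismatch: for $\gamma_1\in(0,+\infty]$ the rotation field $R^h$ produced by Lemma \ref{lemma:rotations1} is piecewise constant on cubes of size $\ep(h)$, yet one must test against oscillations at scale $\ep^2(h)$ (e.g.\ to prove \eqref{eq:to-study}); for $\gamma_1=0$ the opposite mismatch occurs. The paper resolves this via the ``good cubes''/``bad cubes'' decomposition in Step~1 of the proof of Theorem \ref{thm:limit stresses}, showing that the measure of the bad region is $O(\ep(h))$ and controlling the oscillation of $R^h$ across bad cubes through the translation estimate in Lemma \ref{lemma:rotations1}. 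Your proposal simply asserts the decomposition $G=x_3\Pi^u+\text{correctors}$; without this counting argument the assertion is unjustified, and the statement ``affine in $x_3$ with slope $\Pi^u$'' omits the additional free matrix $B(x')$ which is essential to the cell formula \eqref{eq:def-q-hom-bar-gamma1-infty}.

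Second, the limsup construction as written has the wrong scaling and invokes the wrong density result. The $B$-realizing displacement (the $\alpha n_u$ and $(g\cdot\nabla')u$ terms in the paper's ansatz) must appear at order $h$ to contribute to the strain at order $1$, and the fast correctors must enter at orders $h\ep(h)$ and $h\ep^2(h)$ respectively; writing them collectively as $h^2\psi^h(x',x_3)$ either misses these orders or hides factors of $\ep(h)/h$ that are unbounded when $\gamma_1=0$. More importantly, realizing the optimal $B$ by a near-isometry requires that $B={\rm sym}\,\nabla'g+\alpha\Pi^u$ for some smooth $g,\alpha$, which is precisely the structural condition defining $\mathcal{A}(\omega)$. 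Pakzad's density theorem gives density of smooth isometries but does not give this additional solvability; the paper instead relies on Lemma \ref{lemma:approx-B} (density of $\mathcal{A}(\omega)$, from \cite[Lemma 4.3]{hornung.neukamm.velcic}), combined with Lemma \ref{lemma:Q-relaxed-continuous} which guarantees that the optimal $B$ depends linearly on $\Pi^u$ and hence vanishes where $\Pi^u$ vanishes. Without these two ingredients the diagonal argument you sketch does not close.
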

We remark that our main theorem is consistent with the results proved in \cite{hornung.neukamm.velcic} and \cite{velcic}. Indeed, in the presence of a single homogeneity scale, it follows directly from \eqref{eq:def-q-hom-bar-gamma1-infty}--\eqref{eq:def-q-hom-gamma1-infty} that $\overline{\mathscr{Q}}_{\rm hom}^{\gamma_1}$ reduces to the effective energy identified in \cite{hornung.neukamm.velcic} and \cite{velcic} for $\gamma_1\in (0,+\infty]$ and $\gamma_1=0$, respectively. The main difference with respect to \cite{hornung.neukamm.velcic} and \cite{velcic} is in the structure of the homogenized energy density $\overline{\mathscr{Q}}_{\rm hom}^{\gamma_1}$, which is obtained by means of a double pointwise minimization, first with respect to the faster periodicity scale, and then with respect to the slower one and the $x_3$ variable (see \eqref{eq:def-q-hom-bar-gamma1-infty}--\eqref{eq:def-q-hom-gamma1-infty}).\\

The quadratic behavior of the energy density around the set of proper rotations together with the linearization occurring due to the high scalings of the elastic energy yield a convex behavior for the homogenization problem, so that, despite the nonlinearity of the three-dimensional energies, the effective energy does not have an infinite-cell structure, in contrast with \cite{muller}. The main techniques for the proof of the liminf inequality \eqref{eq:intro-liminf} are
the notion of multiscale convergence introduced in \cite{allaire.briane}, and its adaptation to dimension reduction (see \cite{neukammPHD}). The proof of the limsup inequality \eqref{eq:intro-limsup} follows that of \cite[Theorem 2.4]{hornung.neukamm.velcic}.

The crucial part of the paper is the characterization of the three-scale limit of the sequence of linearized elastic stresses (see Section \ref{section:E}). We deal with sequences having unbounded $L^2$ norms but whose oscillations on the scale $\ep$ or $\ep^2$ are uniformly controlled. As in \cite[Lemmas 3.6--3.8]{hornung.neukamm.velcic}, to enhance their multiple-scales oscillatory behavior we work with suitable oscillatory test functions having vanishing average in their periodicity cell. 

The presence of three scales increases the technicality of the problem in all scaling regimes. For $\gamma_1\in(0,+\infty]$, Friesecke, James and M\"uller's rigidity estimate (\cite[Theorem 4.1]{friesecke.james.muller}) leads us to work with sequences of rotations that are piecewise constant on cubes of size $\ep(h)$ with centers in $\ep(h)\Z^2$. However, in order to identify the three-scale limit of the linearized stresses, we must consider sequences oscillating on a scale $\ep^2(h)$. This problem is solved in Step 1 of the proof of Theorem \ref{thm:limit stresses}, by subdividing the cubes of size $\ep^2(h)$, with centers in $\ep^2(h)\Z^2$, into ``good cubes" lying completely within a bigger cube of size $\ep(h)$ and center in $\ep(h)\Z^2$ and ``bad cubes", and by showing that the measure of  the intersection between $\omega$ and the set of ``bad cubes" converges to zero faster than or comparable to $\ep(h)$, as $h\to 0$.

The opposite problem arises in the case in which $\gamma_1=0$. By Friesecke, James and M\"uller's rigidity estimate (\cite[Theorem 4.1]{friesecke.james.muller}), it is natural to work with sequences of piecewise constant rotations which are constant on cubes of size $\ep^2(h)$ having centers in the grid $\ep^2(h)\Z^2$, whereas in order to identify the limit multiscale stress we need to deal with oscillating test functions with vanishing averages on a scale $\ep(h)$. The identification of ``good cubes" and ``bad cubes"  of size $\ep^2(h)$ is thus not helpful in this latter framework as the contribution of the oscillating test functions on cubes of size $\ep^2(h)$ is not negligible anymore. Therefore, we are only able to perform an identification of the multiscale limit in the case $\gamma_2=+\infty$, extending to the multiscale setting the results in \cite{velcic}. The identification of the effective energy in the case in which $\gamma_1=0$ and $\gamma_2\in [0,+\infty)$ remains an open question.

The paper is organized as follows: in Section \ref{section:setting} we set the problem and introduce the assumptions on the energy density. In Section \ref{section:compactness} we recall a few compactness results and the definition and some properties of multiscale convergence. 
Sections \ref{section:E} and \ref{section:liminf} are devoted to the identification of the limit linearized stress and to the proof of the liminf inequality \eqref{eq:intro-liminf}. In Section \ref{section:limsup} we show the optimality of the lower bound deduced in Section \ref{section:liminf}, and we exhibit a recovery sequence satisfying \eqref{eq:intro-limsup}.\\

\subsection{Notation}
In what follows, $Q:=\big(-\tfrac12,\tfrac12\big)^2$ denotes the unit cube in $\R^2$ centered at the origin and with sides parallel to the coordinate axes. We will write a point $x\in\R^3$ as
$$x=(x',x_3),\qquad\text{where }x'\in\R^2\text{ and }x_3\in\R,$$
and we will use the notation $\nabla'$ to denote the gradient with respect to $x'$. For every $r\in\R$, $\ceil{r}$ is its greatest integer part. With a slight abuse of notation, for every $x'\in\R^2$, $\ceil{x'}$ and $\floor{x'}$ are the points in $\R^2$ whose coordinates are given by the greatest and least integer parts of the coordinates of $x'$, respectively. Given a map $\phi\in W^{1,2}(\R^2)$, $(y\cdot\nabla')\phi(x')$ stands for
$$(y\cdot\nabla')\phi(x'):=y_1\partial_{x_1}\phi(x')+y_2\partial_{x_2}\phi(x')\quad\text{for a.e. }x'\in\R^2\text{ and }y\in Q.$$
We write $\np \phi$ to indicate the map
$$\np \phi(x'):=(-\partial_{x_2}\phi,\partial_{x_1}\phi)\quad\text{for a.e. }x'\in\R^2.$$ 

We denote by $\M^{n\times m}$ the set of matrices with $n$ rows and $m$ columns and by $SO(3)$ the set of proper rotations, that is
$$SO(3):=\{R\in\mthree:\,R^TR=Id\text{ and }{\rm{det }}\,R=1\}.$$
Given a matrix $M\in\mthree$, $M'$ stands for the $3\times 2$ submatrix of $M$ given by its first two columns. For every $M\in\M^{n\times n}$, ${\rm sym}\,M$ is the the $n\times n$ symmetrized matrix defined as $${\rm sym }\,M:=\frac{M+M^T}{2}.$$

Whenever a map $v\in L^2, C^{\infty},\cdots$, is $Q$-periodic, that is
$$v(x+e_i)=v(x)\quad i=1,2,$$
for a.e. $x\in \R^2$, where $\{e_1,e_2\}$ is the othonormal canonical basis of $\R^2$, we write $v\in L^2_{\rm per}, C^{\infty}_{\rm per},\cdots$, respectively. We implicitly identify the spaces $L^2(Q)$ and $L^2_{\rm per}(\R^2)$. We denote the Lebesgue measure of a measurable set $A\subset\R^N$ by $|A|$.

We adopt the convention that $C$ designates a generic constant, whose value may change from expression to expression in the same formula.
\section{Setting of the problem}
\label{section:setting}
Let $\omega\subset \R^2$ be a bounded Lipschitz domain whose boundary is piecewise $C^1$. This regularity assumption is only needed in Section \ref{section:limsup}, while the results in Sections \ref{section:compactness}--\ref{section:liminf} continue to hold for every bounded Lipschitz domain $\omega\subset\R^2$. We assume that the set
$$\Omega_h:=\omega\times(-\tfrac h2,\tfrac h2)$$
is the reference configuration of a nonlinearly elastic thin plate. In the sequel, $\{h\}$ and $\{\ep(h)\}$ are monotone decreasing sequences of positive numbers,  $h\to 0$, $\ep(h)\to 0$ as $h\to 0$, such that the following limits exist
$$\gamma_1:=\lim_{h\to 0}\frac{h}{\ep(h)}\quad\text{and}\quad\gamma_2:=\lim_{h\to 0}\frac{h}{\ep^2(h)},$$
with $\gamma_1,\gamma_2\in [0,+\infty]$. There are five possible regimes:
$\gamma_1,\gamma_2=+\infty$; $0<\gamma_1<+\infty$ and $\gamma_2=+\infty$; $\gamma_1=0$ and $\gamma_2=+\infty$; $\gamma_1=0$ and $0<\gamma_2<+\infty$;
$\gamma_1=0$ and $\gamma_2=0$. 
We focus here on the first three regimes, that is on the cases in which $\gamma_2=+\infty$. 

 For every deformation $v\in W^{1,2}(\Omega_h;\R^3)$, we consider its rescaled elastic energy 
 $$\mathcal{J}^h(v):=\frac{1}{h}\int_{\Omega_h}W\Big(\frac{x'}{\ep(h)}, \frac{x'}{\ep^2(h)}, \nabla v(x)\Big)\,dx,$$
 where $W:\R^2\times\R^2\times\M^{3\times 3}\to [0,+\infty)$ represents the stored energy density of the plate, and $(y,z,F)\mapsto W(y,z,F)$ is measurable and $Q$-periodic in its first two variables, i.e., with respect to $y$ and $z$. We also assume that for a.e. $y$ and $z$, the map $W(y,z,\cdot)$ is continuous and satisfies the following assumptions:
 \begin{enumerate}[({H}1)]
 \item $W(y,z,RF)=W(y,z,F)$ for every $F\in\mthree$ and for all $R\in SO(3)$ (frame indifference),
 \item $W(y,z,F)\geq C_1\,\dist^2(F;SO(3))$ for every $F\in\mthree$ (nondegeneracy),
 \item there exists $\delta>0$ such that $W(y,z,F)\leq C_2\,\dist^2(F;SO(3))$ for every $F\in\mthree$ with $\dist(F;SO(3))<\delta$,
 \item $\lim_{|G|\to 0}\frac{W(y,z,Id+G)-{\mathscr{Q}}(y,z,G)}{|G|^2}=0$, where ${\mathscr{Q}}(y,z,\cdot)$ is a quadratic form on $\mthree$.
  \end{enumerate}
    By assumptions (H1)--(H4) we obtain the following lemma, which guarantees the continuity of the quadratic map ${\mathscr{Q}}$ introduced in (H4). 
\begin{lemma}
\label{lemma:Q-continuous}
Let $W:\R^2\times\R^2\times\M^{3\times 3}\to [0,+\infty)$ satisfy (H1)--(H4) and let ${\mathscr{Q}}:\R^2\times\R^2\times\mthree\to[0,+\infty)$ be defined as in (H4). Then,
\begin{enumerate}
\item[(i)] ${\mathscr{Q}}(y,z,\cdot)$ is continuous for a.e. $y,z\in \R^2$,
\item[(ii)] ${\mathscr{Q}}(\cdot,\cdot,F)$ is $Q\times Q$-periodic and measurable for every $F\in\mthree$,
\item[(iii)] for a.e. $y,z\in\R^2$, the map ${\mathscr{Q}}(y,z,\cdot)$ is quadratic on $\mthree_{\rm sym}$, and satisfies
$$\frac{1}{C}|{\rm sym F}|^2\leq {\mathscr{Q}}(y,z,F)={\mathscr{Q}}(y,z,{\rm symF})\leq C|{\rm sym F}|^2$$
for all $F\in\mthree$, and some $C>0$.
In addition, there exists a monotone function 
$$r:[0,+\infty)\to[0,+\infty],$$ such that $r(\delta)\to 0$ as $\delta\to 0$, and
$$|W(y,z,Id+F)-{\mathscr{Q}}(y,z,F)|\leq |F|^2r(|F|)$$
for all $F\in\mthree$, for a.e. $y,z\in\R^2$.
\end{enumerate}
\end{lemma}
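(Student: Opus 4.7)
The plan is to extract each claim from the defining limit of $\mathscr{Q}$ in (H4), using the remaining hypotheses (H1)--(H3) in standard ways. The key intermediate step is the symmetrization $\mathscr{Q}(y,z,F)=\mathscr{Q}(y,z,\mathrm{sym}\,F)$, from which the structural statements in (iii) follow automatically. I would fix $F\in\mthree$ and, for $t>0$ small, polar-decompose $\mathrm{Id}+tF=R(t)U(t)$ with $R(t)\in SO(3)$ and
$$U(t)=\sqrt{(\mathrm{Id}+tF)^T(\mathrm{Id}+tF)}=\mathrm{Id}+t\,\mathrm{sym}\,F+O(t^2).$$
Frame indifference (H1) gives $W(y,z,\mathrm{Id}+tF)=W(y,z,U(t))$; dividing by $t^2$ and sending $t\to 0^+$, (H4) applied on both sides yields the identity. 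Since $\mathscr{Q}(y,z,\cdot)$ is already quadratic on $\mthree$ by (H4), its restriction to $\M^{3\times 3}_{\mathrm{sym}}$ is automatically quadratic and continuous, proving (i).

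For the two-sided bound in (iii), I would rely on the first-order expansion
$$\dist(\mathrm{Id}+tG,SO(3))=t\,|\mathrm{sym}\,G|+o(t)\quad\text{as }t\to 0^+,$$
coming from linearization of the nearest-point projection onto $SO(3)$ at $\mathrm{Id}$. Plugging $\mathrm{Id}+tG$ into (H2) and (H3), dividing by $t^2$ and passing to the limit gives $C_1|\mathrm{sym}\,G|^2\le\mathscr{Q}(y,z,G)\le C_2|\mathrm{sym}\,G|^2$. For (ii), the periodicity and measurability of $\mathscr{Q}(\cdot,\cdot,F)$ are inherited from those of $W$, since (H4) lets one write
$$\mathscr{Q}(y,z,F)=\lim_{t\to 0^+}\frac{W(y,z,\mathrm{Id}+tF)}{t^2}$$
as a pointwise limit of $Q\times Q$-periodic measurable functions.

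The main obstacle is producing a modulus $r(\delta)\to 0$ that is \emph{independent of $(y,z)$}, since (H4) by itself only gives pointwise-in-$(y,z)$ convergence. I would define
$$r(\delta):=\operatorname*{ess\,sup}_{(y,z)\in Q\times Q}\,\sup_{0<|F|\le \delta}\frac{|W(y,z,\mathrm{Id}+F)-\mathscr{Q}(y,z,F)|}{|F|^2},$$
which is monotone nondecreasing in $\delta$, and argue by contradiction. The uniform quadratic control of $W(y,z,\mathrm{Id}+\cdot)$ near $0$ provided by (H3), combined with the bound on $\mathscr{Q}$ from the previous step, ensures that the ratios are uniformly bounded; Egorov's theorem applied to the pointwise convergence in (H4) then upgrades it to almost-uniform convergence on $Q\times Q$, and a compactness extraction of a direction in $\mathbb{S}^{8}$ together with a diagonal argument contradicts the definition of $\mathscr{Q}$ at a limiting point. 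This uniformization, which mirrors the analogous step in \cite{friesecke.james.muller}, is the technical heart of the lemma; the rest of the proof is essentially bookkeeping around Taylor expansions of the polar factor and of the distance to $SO(3)$.
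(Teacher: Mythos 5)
The paper does not give a proof of this lemma: it refers to \cite[Lemma 2.7]{neukamm} and \cite[Lemma 4.1]{neukamm.velcic} for the $z$-independent case and declares the adaptation straightforward. Your treatment of (i), (ii), and the two-sided quadratic bounds in (iii) is correct and is essentially the standard argument used there: polar decomposition of $Id+tF$ together with (H1) gives the symmetrization identity $\mathscr{Q}(y,z,F)=\mathscr{Q}(y,z,{\rm sym\,}F)$; inserting the expansion $\dist(Id+tG,SO(3))=t\,|{\rm sym\,}G|+O(t^2)$ into (H2)--(H3), dividing by $t^2$ and passing to the limit pins $\mathscr{Q}$ between $C_1|{\rm sym\,}G|^2$ and $C_2|{\rm sym\,}G|^2$; and periodicity and measurability pass to the pointwise limit defining $\mathscr{Q}$.

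The gap is in the last assertion of (iii), the existence of a modulus $r$ uniform in $(y,z)$. You rightly single this out as the technical heart, but the Egorov-based argument you outline cannot deliver it. Egorov upgrades a.e.\ pointwise convergence to uniform convergence \emph{off a set of small but positive measure}, whereas the lemma requires that $|W(y,z,Id+F)-\mathscr{Q}(y,z,F)|\le|F|^2r(|F|)$ hold for a.e.\ $(y,z)$ (a single fixed null set) and \emph{all} $F$, i.e.\ genuine essential-sup uniform convergence. No compactness extraction or diagonal argument across Egorov's exceptional sets bridges that difference, and (H2)--(H3) only provide uniform \emph{boundedness} of the quotient $|W(y,z,Id+F)-\mathscr{Q}(y,z,F)|/|F|^2$ for small $F$, not smallness. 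If (H4) is read pointwise in $(y,z)$, as the paper writes it, the quantity
\[
r(\delta)=\mathop{\rm ess\,sup}_{(y,z)\in Q\times Q}\ \sup_{0<|F|\le\delta}\frac{|W(y,z,Id+F)-\mathscr{Q}(y,z,F)|}{|F|^2}
\]
need not tend to $0$: a frame-indifferent density of the form $|F^TF-Id|^2\bigl(1+\sin^2(b(y,z)\,|F^TF-Id|)\bigr)$ with $b$ measurable and unbounded satisfies (H1)--(H4) a.e.\ but keeps $r(\delta)$ bounded away from zero. In the cited references the uniformity in the oscillation variable is in effect built into the standing assumption corresponding to (H4), so that the claim is a reformulation of a hypothesis rather than a genuine promotion of pointwise to uniform convergence. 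Your instinct for where the difficulty lies is correct; the mechanism you propose does not close it, and indeed cannot without a uniform version of (H4).
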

We refer to \cite[Lemma 2.7]{neukamm}  and to \cite[Lemma 4.1]{neukamm.velcic} for a proof of Lemma \ref{lemma:Q-continuous} in the case in which ${\mathscr{Q}}$ is independent of $z$. The proof in the our setting is a straightforward adaptation.

  As it is usual in dimension reduction analysis, we perform a change of variables in order to reformulate the problem on a domain independent of the varying thickness parameter. We set $$\Omega:=\Omega_1=\omega\times(-\tfrac12,\tfrac12)$$
  and we consider the change of variables $\psi^h:\Omega\to \Omega^h$, defined as 
  $$\psi^h(x)=(x',h x_3)\quad\text{for every }x\in\Omega.$$To every deformation $v\in W^{1,2}(\Omega_h;\R^3)$ we associate a function $u\in W^{1,2}(\Omega;\R^3)$, defined as $u:=v\circ \psi^h$, whose elastic energy is given by
  $$\mathcal{E}^h(u)=\mathcal{J}^h(v)=\int_{\Omega}W\Big(\frac{x'}{\ep(h)}, \frac{x'}{\ep^2(h)}, \nh u(x)\Big)\,dx,$$
  where
  $$\nh u(x):=\Big(\nabla'u(x)\big|\frac{\partial_{x_3}u(x)}{h}\Big)\quad\text{for a.e. }x\in\Omega.$$
  
  In this paper we focus on the asymptotic behavior of sequences of deformations $\{u^h\}\subset W^{1,2}(\Omega_h;\R^3)$ satisfying the uniform energy estimate
  \be{eq:uniform-en-estimate}
  \mathcal{E}^h(u^h):=\int_{\Omega}W\Big(\frac{x'}{\ep(h)}, \frac{x'}{\ep^2(h)}, \nh u^h(x)\Big)\,dx\leq Ch^2\quad\text{for every }h>0.
  \ee
  We remark that in the case in which $W$ is independent of $y$ and $z$, such scalings of the energy lead to Kirchhoff's nonlinear plate theory, which was rigorously justified by means of $\Gamma-$convergence tecniques in the seminal paper \cite{friesecke.james.muller}.
  
\section{Compactness results and multiscale convergence}
\label{section:compactness}
In this section we present a few preliminary results which will allow us to deduce compactness for sequences of deformations satisfying the uniform energy estimate \eqref{eq:uniform-en-estimate}.  

We first recall \cite[Theorem 4.1]{friesecke.james.muller}, which provides a characterization of limits of deformations whose scaled gradients are uniformly close in the $L^2$-norm to the set of proper rotations.
\begin{theorem}
\label{thm:compactness-def}
Let $\{u^h\}\subset W^{1,2}(\Omega;\R^3)$ be such that
\be{eq:unif-bending-en}
\limsup_{h\to 0} \frac{1}{h^2}\iO \dist^2(\nh u^h(x),SO(3))\,dx<+\infty.
\ee
Then, there exists a map $u\in W^{2,2}(\omega;\R^3)$ such that, up to the extraction of a (not relabeled) subsequence,
\begin{align}
\nonumber&u^h-\fint_{\Omega} u^h(x)\,dx\to u\quad\text{strongly in }L^2(\Omega;\R^3)\\
\nonumber&\nh u^h\to (\nabla'u|n_u)\quad\text{strongly in }L^2(\Omega;\mthree),
\end{align}
with
\be{eq:normal1}\partial_{x_\alpha} u(x')\cdot\partial_{x_\beta} u(x')=\delta_{\alpha,\beta}\quad\text{for a.e. }x'\in\omega,\quad\,\alpha,\beta\in\{1,2\} \ee
and
\be{eq:normal2}n_u(x'):=\partial_{x_1}u(x')\wedge\partial_{x_2}u(x')\quad\text{for a.e. }x'\in\omega.\ee
\end{theorem}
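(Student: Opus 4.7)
My plan is to invoke the Friesecke--James--M\"uller quantitative geometric rigidity estimate cell-by-cell on a partition of $\omega$ into in-plane squares of side $h$, and then to identify the limit via a compactness argument on an associated rotation field. To prepare the ground, I would undo the thickness rescaling by setting $v^h(x',x_3):=u^h(x',x_3/h)$ on $\Omega_h$, so that $\nabla v^h(x',hx_3)=\nh u^h(x)$ and \eqref{eq:unif-bending-en} translates into $\int_{\Omega_h}\dist^2(\nabla v^h,SO(3))\,dx\leq Ch^3$. Partitioning $\omega$ into squares $\{S^h_\alpha\}$ of side $h$, each cylinder $S^h_\alpha\times(-h/2,h/2)$ is isotropic, so rigidity furnishes a constant rotation $R^h_\alpha\in SO(3)$ with
$$\int_{S^h_\alpha\times(-h/2,h/2)}|\nabla v^h-R^h_\alpha|^2\,dx\leq C\int_{S^h_\alpha\times(-h/2,h/2)}\dist^2(\nabla v^h,SO(3))\,dx,$$
and a constant $C$ that is independent of $\alpha$ and $h$ by scale invariance. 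Summing over $\alpha$ and reverting to $\Omega$ yields $\iO|\nh u^h-R^h|^2\,dx\leq Ch^2$, where $R^h:\omega\to SO(3)$ is the piecewise constant field defined by $R^h|_{S^h_\alpha}=R^h_\alpha$.

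Next I would control the oscillation of $R^h$ between neighbouring cells by applying rigidity once more to each pair of adjacent cylinders $(S^h_\alpha\cup S^h_\beta)\times(-h/2,h/2)$. The triangle inequality, combined with the cell-wise bound, yields $|R^h_\alpha-R^h_\beta|^2 h^3\leq C\int_{(S^h_\alpha\cup S^h_\beta)\times(-h/2,h/2)}\dist^2(\nabla v^h,SO(3))\,dx$. Summing over all adjacent pairs produces $\sum_{\mathrm{adj.}}|R^h_\alpha-R^h_\beta|^2\leq C$, which, after a standard mollification of $R^h$ at scale $h$, gives an approximation $\tilde R^h\in W^{1,2}(\omega;\mthree)$ uniformly bounded in $W^{1,2}$, with values within $Ch$ of $SO(3)$. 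Rellich--Kondrachov then extracts a subsequence $\tilde R^h\to R$ strongly in $L^2(\omega;\mthree)$ with $R\in W^{1,2}(\omega;SO(3))$, and combined with the previous step $\nh u^h\to R$ strongly in $L^2(\Omega;\mthree)$.

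To conclude, write $R=(R_1|R_2|R_3)$. Since the first two columns of $\nh u^h$ coincide with $\nabla'u^h$, passing to the distributional limit gives $R_\alpha=\partial_{x_\alpha} u$ for some $u\in W^{1,2}(\omega;\R^3)$ fixed by subtracting the $L^2$-mean of $u^h$; the $W^{1,2}$ regularity of $R_1,R_2$ upgrades this to $u\in W^{2,2}(\omega;\R^3)$. The orthogonality $R^TR=Id$ then yields \eqref{eq:normal1}, while $\det R=1$ forces $R_3=R_1\wedge R_2$, i.e.\ \eqref{eq:normal2}. Strong convergence of $u^h-\fint_\Omega u^h$ to $u$ in $L^2(\Omega;\R^3)$ follows from Poincar\'e's inequality applied to $u^h-u$ together with the $L^2$-convergence of $\nh u^h$.

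The main obstacle is the rigidity-based construction itself: the mesh must be chosen so that each cylinder is isotropic (to secure uniform rigidity constants across cells), and the $W^{1,2}$-bound on the piecewise constant rotation field has to be extracted from adjacent-cell differences by invoking rigidity a second time on cell unions, taking care that each point is counted only in a bounded number of such unions. Once this bound is established, the Rellich compactness, the identification $R_\alpha=\partial_{x_\alpha} u$, and the isometric constraint are essentially routine consequences of the strong $L^2$-convergence of $\nh u^h$ to an $SO(3)$-valued map.
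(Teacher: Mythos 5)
Your proposal reconstructs, essentially faithfully, the proof of the original Friesecke--James--M\"uller rigidity compactness theorem, which is exactly the result the paper invokes here without reproving: the paper states Theorem \ref{thm:compactness-def} as a direct recall of \cite[Theorem 4.1]{friesecke.james.muller}, with the $W^{2,2}$-regularity traced to the approximation by piecewise constant rotations in Lemma \ref{lemma:rotations1}. Your cell-by-cell application of rigidity on $h$-cubes, the adjacent-pair oscillation bound, the mollified Sobolev field $\tilde R^h$, and the identification $R=(\nabla'u|n_u)$ are all precisely the FJM argument. One small imprecision: the mollification $\tilde R^h$ of $R^h$ does \emph{not} take values uniformly within $Ch$ of $SO(3)$, since the adjacent-cell differences $|R^h_\alpha-R^h_\beta|$ are only square-summably small, not pointwise small; what one actually needs (and what holds) is $\|\tilde R^h-R^h\|_{L^2}\to 0$, so that the $L^2$-limit $R$ inherits $SO(3)$-valuedness from the pointwise $SO(3)$-valued sequence $R^h$ along an a.e.\ convergent subsequence. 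This does not affect the conclusion.
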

A crucial point in the proof of the liminf inequality \eqref{eq:intro-liminf}(see Sections \ref{section:E} and \ref{section:liminf}) is to approximate the scaled gradients of deformations with uniformly small energies, by sequences of maps which are either piecewise constant on cubes of size comparable to the homogenization parameters with values in the set of proper rotations, or have Sobolev regularity and are close in the $L^2$-norm to piecewise constant rotations. The following lemma has been stated in \cite[Lemma 3.3]{velcic}, and its proof follows by combining \cite[Theorem 6]{friesecke.james.muller2} with the argument in \cite[Proof of Theorem 4.1, and Section 3]{friesecke.james.muller}. We remark that the additional regularity of the limit deformation $u$ in Theorem \ref{thm:compactness-def} is a consequence of Lemma \ref{lemma:rotations1}, and in particular of the approximation of scaled gradients by $W^{1,2}$ maps. 
\begin{lemma}
\label{lemma:rotations1}
Let $\gamma_0\in(0,1]$ and let $h,\delta>0$ be such that
$$\gamma_0\leq\frac{h}{\delta}\leq\frac{1}{\gamma_0}.$$
There exists a constant $C$, depending only on $\omega$ and $\gamma_0$, such that for every $u\in W^{1,2}(\omega;\R^3)$ there exists a map $R:\omega\to SO(3)$ piecewise constant on each cube $x+\delta Y$, with $x\in\delta\Z^2$, and there exists $\tilde{R}\in W^{1,2}(\omega;\mthree)$ such that 
\begin{align*}
&\|\nh u-R\|_{L^2(\Omega;\mthree)}^2+\|R-\tilde{R}\|_{L^2(\omega;\mthree)}^2
+h^2\|\nabla ' \tilde{R}\|_{L^2(\omega;\mthree\times\mthree)}^2\\
&\quad\leq C\|\dist(\nh u;SO(3))\|_{L^2(\Omega)}.
\end{align*}
Moreover, for every $\xi\in \R^2$ satisfying 
$$|\xi|_{\infty}:=\max\{|\xi\cdot e_1|,|\xi\cdot e_2|\}<h,$$
and for every $\omega'\subset\omega$, with $\dist(\omega',\partial\omega)>Ch$, there holds
$$\|R(x')-R(x'+\xi)\|_{L^2(\omega';\mthree)}\leq C\|\dist(\nh u;SO(3))\|_{L^2(\Omega)}.$$
\end{lemma}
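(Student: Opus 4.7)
The plan is to adapt the argument of \cite[Theorem 6]{friesecke.james.muller2} to the anisotropic rescaling $\nh$: apply Friesecke--James--M\"uller rigidity on each in-plane cube of size $\delta$ to produce a piecewise constant rotation field $R$, then smooth it out by interpolation to obtain $\tilde R$. The aspect-ratio hypothesis $\gamma_0\le h/\delta\le 1/\gamma_0$ is exactly what keeps all rigidity constants dependent only on $\omega$ and $\gamma_0$.

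For each $z\in\delta\Z^2$ whose cell $C_z:=(z+\delta Q)\times\iv$ meets $\Omega$, I would pull back $u$ from the physical cell $(z+\delta Q)\times(-h/2,h/2)$ to a reference domain via $(y',y_3)\mapsto((x'-z)/\delta,x_3/h)$. Since $h/\delta\in[\gamma_0,1/\gamma_0]$, the reference domain is a cuboid whose side lengths are bounded above and below in terms of $\gamma_0$, so \cite[Theorem 4.1]{friesecke.james.muller} applied to the transformed map yields $R_z\in SO(3)$ with $\int_{C_z}|\nh u-R_z|^2\,dx\le C\int_{C_z}\dist^2(\nh u;SO(3))\,dx$ for a constant $C=C(\omega,\gamma_0)$. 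Setting $R\equiv R_z$ on $(z+\delta Q)\cap\omega$ and summing over $z$ yields the first term of the target bound. For two adjacent cells $C_z,C_{z'}$, applying the same rigidity result on the (still bounded-aspect-ratio) union produces a common $R^*\in SO(3)$; the triangle inequality combined with $|C_z|=\delta^2$ gives the crucial neighbor estimate
\[
|R_z-R_{z'}|^2\,\delta^2\le C\int_{C_z\cup C_{z'}}\dist^2(\nh u;SO(3))\,dx.
\]

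I would then take $\tilde R$ to be the piecewise affine interpolation of the values $R_z$ on a simplicial decomposition with vertices at the cell centers (equivalently, a smooth mollification of $R$ at scale $\delta$). On each cell this forces $|\nabla'\tilde R|\le C\delta^{-1}\max_{z'\sim z}|R_z-R_{z'}|$ and $|R-\tilde R|\le C\max_{z'\sim z}|R_z-R_{z'}|$; summing the neighbor estimate above and using $h/\delta\le 1/\gamma_0$ controls both $\|R-\tilde R\|_{L^2(\omega)}^2$ and $h^2\|\nabla'\tilde R\|_{L^2(\omega)}^2$ by the right-hand side. For the shift estimate, since $|\xi|_{\infty}<h\le\delta/\gamma_0$, any two points $x'$ and $x'+\xi$ lie in cells separated by at most $\lceil 1/\gamma_0\rceil+1$ steps; iterating the neighbor bound along a chain of uniformly bounded length and integrating over $\omega'$ --- where the condition $\dist(\omega',\partial\omega)>Ch$ guarantees every cell in the chain sits inside $\omega$ --- yields the displayed inequality.

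The main obstacle is the treatment of cells $C_z$ that intersect $\partial\omega$: after rescaling the relevant domains are no longer cuboids, and a priori the FJM rigidity constant depends on their shape. This is handled as in \cite[Section 3]{friesecke.james.muller}, either by extending $u$ by reflection to a tubular neighborhood of $\omega$, or by absorbing each boundary cell into a nearby well-shaped Lipschitz patch; the Lipschitz regularity of $\partial\omega$ assumed in Section \ref{section:setting} ensures that the rigidity constants for these patches remain bounded in terms of $\omega$ and $\gamma_0$ alone.
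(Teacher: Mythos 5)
Your proposal is correct and is essentially the intended argument: the paper gives no proof of its own but cites \cite[Lemma 3.3]{velcic} and states that the proof ``follows by combining \cite[Theorem 6]{friesecke.james.muller2} with the argument in \cite[Proof of Theorem 4.1, and Section 3]{friesecke.james.muller}'', which is precisely the cell-by-cell rigidity, adjacent-cell comparison, interpolation, and boundary-patch treatment you describe. (Note only that your argument naturally yields the right-hand side $C\|\dist(\nh u;SO(3))\|_{L^2(\Omega)}^2$; the missing square in the statement is a typo, as the squared form is what is used in \eqref{eq:approximate-rot}.)
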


We now recall the definitions of ``2-scale convergence" and ``3-scale convergence". For a detailed treatment of two-scale convergence we refer to, \emph{e.g.}, \cite{allaire, lukkassen.nguetseng.wall, nguetseng}. The main results on multiscale convergence may be found in  \cite{allaire.briane, baia.fonseca, ferreira.fonseca, ferreira.fonseca2}.\begin{definition}
Let $D$ be an open set in $\R^N$ and let $Y^N$ be the unit cube in $\R^N$,
$$Y^N:=\Big(-\tfrac12,\tfrac12\Big)^N.$$
Let $u\in L^2(D\times Y^N)$ and $\{u^h\}\in L^2(D)$. We say that $\{u^h\}$ \emph{converges weakly 2-scale to $u$} in $L^2(D\times Y^N)$, and we write $u^h\swkts u$  if
$$\int_{D} u^h(\xi)\varphi\Big(\xi,\frac{\xi}{\ep(h)} \Big)\,d\xi\to \int_{D}\int_{Y^N} {u(\xi,\eta)\varphi(\xi,\eta)}\,d\eta\,d\xi$$
for every $\varphi\in C^{\infty}_c(D;C_{\rm per}(Y^N))$.\\

Let $u\in L^2(D\times Y^N\times Y^N)$ and $\{u^h\}\in L^2(D)$. We say that $\{u^h\}$ \emph{converges weakly 3-scale to $u$} in $L^2(D\times Y^N\times Y^N)$, and we write $u^h\swks u$, if
$$\int_{D} u^h(\xi)\varphi\Big(\xi,\frac{\xi}{\ep(h)}, \frac{\xi}{\ep^2(h)} \Big)\,d\xi\to  \int_{D}\int_{Y^N}\int_{Y^N}{u(\xi,\eta,\lambda)\varphi(\xi,\eta,\lambda)}\,d\lambda\,d\eta\,d\xi$$
for every $\varphi\in C^{\infty}_c(D;C_{\rm per}(Y^N\times Y^N))$. 

We say that $\{u^h\}$ \emph{converges strongly 3-scale to $u$} in $L^2(D\times Y^N\times Y^N)$, and we write $u^h\sstts u$, if
$$u^h\swks u\quad\text{weakly 3-scale}$$
and $$\|u^h\|_{L^2(D)}\to \|u\|_{L^2(D\times Y^N\times Y^N)}.$$
\end{definition}
In order to simplify the statement of Theorem \ref{thm:limit stresses} and its proof, we introduce the definition of ``dr-3-scale convergence" (dimension reduction three-scale convergence), i.e., 3-scale convergence adapted to dimension reduction, inspired by S. Neukamm's 2-scale convergence adapted to dimension reduction (see \cite{neukammPHD}).
\begin{definition}
\label{def:dr}
Let $u\in L^2(\Omega\times Q\times Q)$ and $\{u^h\}\in L^2(\Omega)$. We say that $\{u^h\}$ \emph{converges weakly dr-3-scale to $u$} in $L^2(\Omega\times Q\times Q)$, and we write $u^h\wks u$, if
$$\iO u^h(x)\varphi\Big(x,\frac{x'}{\ep(h)}, \frac{x'}{\ep^2(h)} \Big)\,dx\to \intoyz {u(x,y,z)\varphi(x,y,z)}$$
for every $\varphi\in C^{\infty}_c(\Omega;C_{\rm per}(Q\times Q))$. 
\end{definition}
\begin{remark}
We point out that ``dr-3-scale convergence" is just a particular case of classical 3-scale convergence. Indeed, what sets apart ``dr-3-scale convergence" from the classical 3-scale convergence is solely the fact that the test functions in Definition \ref{def:dr} depend on $x_3$ but oscillate only in the cross-section $\omega$. In particular, if $\{u^h\}\in L^2(\Omega)$ and
$$u^h\wks u\quad\text{weakly dr 3 scale}$$
then $\{u^h\}$ is bounded in $L^2(\Omega)$. Therefore, by \cite[Theorem 1.1]{allaire.briane} there exists $\xi\in L^2(\Omega\times (Q\times (-\tfrac12,\tfrac12))\times (Q\times (-\tfrac12,\tfrac12)))$ such that, up to the extraction of a (not relabeled) subsequence,
$$u^h\overset{3-s}{\lrightharpoonup}\xi\quad\text{weakly 3-scale },$$
that is $u^h$ weakly 3-scale converge to $\xi$ in $L^2(\Omega\times (Q\times (-\tfrac12,\tfrac12))\times (Q\times (-\tfrac12,\tfrac12)))$ (in the sense of classical 3-scale convergence). Hence, the ``dr-3-scale limit" $u$ and the ``classical 3-scale limit" $\xi$ are related by
$$u(x,y,z)=\int_{-\tfrac12}^{\tfrac12}\int_{-\tfrac12}^{\tfrac12}\xi(x,y,z,\eta,\tau)\,d\eta\,d\tau\quad\text{for a.e. }x\in\omega\text{ and }y,z\in Q.$$
\end{remark}
We now state a theorem regarding the characterization of limits of scaled gradients in the multiscale setting adapted to dimension reduction. We omit its proof as it is a simple generalization of the arguments in \cite[Theorem 6.3.3]{neukammPHD}.

\begin{theorem}
\label{thm:mod3scales-limits}
Let $u,\{u^h\}\subset W^{1,2}(\Omega)$ be such that
$$u^h\wk u\quad\text{weakly in }W^{1,2}(\Omega).$$
and
$$\limsup_{h\to 0}\iO |\nh u^h(x)|^2\,dx<\infty.$$
Then $u$ is independent of $x_3$. Moreover, there exist $u_1\in L^2(\Omega;W^{1,2}_{\rm per}(Q))$, $u_2\in L^2(\Omega\times Q;W^{1,2}_{\rm per}(Q))$, and $\bar{u}\in L^2\big(\omega\times Q\times Q;W^{1,2}\big(-\tfrac12,\tfrac12\big)\big)$ such that, up to the extraction of a (not relabeled) subsequence,
$$\nh u^h\wks\big(\nabla'u+\nabla_{y}u_1+\nabla_{z}u_2\Big|\partial_{x_3}\bar{u}\big)\quad\text{weakly dr-3-scale}.$$

Moreover,
\begin{enumerate}[(i)]
\item if $\gamma_1=\gamma_2=+\infty$ (\emph{i.e.} $\ep(h)<<h$), then $\partial_{y_i}\bar{u}=\partial_{z_i}\bar{u}=0$, for $i=1,2$;
\item if $0<\gamma_1<+\infty$ and $\gamma_2=+\infty$ (\emph{i.e.} $\ep(h)\sim h$), then
$$\bar{u}=\frac{u_1}{\gamma_1};$$
\item if $\gamma_1=0$ and $\gamma_2=+\infty$ (\emph{i.e.} $h<<\ep(h)<<h^{\tfrac12}$), then
$$\partial_{x_3}u_1=0\quad\text{and}\quad\partial_{z_i}\bar{u}=0,\,i=1,2.$$
\end{enumerate}
\end{theorem}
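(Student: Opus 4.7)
The plan is to mimic the argument in \cite[Theorem 6.3.3]{neukammPHD}, extending its two-scale analysis to the three-scale setting and invoking the Allaire--Briane multiscale compactness. That $u$ is independent of $x_3$ is immediate: $\|\nh u^h\|_{L^2}\leq C$ forces $\|\partial_{x_3}u^h\|_{L^2}\leq Ch\to 0$. Via the identification of dr-3-scale convergence with averaged classical 3-scale convergence recorded in the Remark, along a subsequence $\nh u^h\wks(\xi'|\xi_3)$. The structure of the in-plane part $\xi'=\nabla'u+\nabla_y u_1+\nabla_z u_2$ is then standard: testing against oscillating solenoidal fields of the form $\ep(h)\phi(x)\nabla_y^\perp\rho(x'/\ep)$ and $\ep^2(h)\phi(x)\nabla_z^\perp\sigma(x'/\ep^2)$ and integrating by parts in $x_\alpha$ yield curl-freeness of $\xi'$ in $y$ and in $z$, and a De Rham-type decomposition produces the correctors $u_1\in L^2(\Omega;W^{1,2}_{\rm per}(Q))$ and $u_2\in L^2(\Omega\times Q;W^{1,2}_{\rm per}(Q))$.

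To identify $\xi_3$ I introduce $v^h(x):=h^{-1}\bigl(u^h(x)-\fint_{-1/2}^{1/2}u^h(x',s)\,ds\bigr)$. Poincar\'e's inequality in $x_3$ gives $\|v^h\|_{L^2(\Omega)}\leq\|\partial_{x_3}u^h/h\|_{L^2(\Omega)}\leq C$, so up to subsequence $v^h\wks\bar u$ for some $\bar u\in L^2(\omega\times Q\times Q;W^{1,2}(-\tfrac12,\tfrac12))$, and integration by parts in $x_3$ against test functions $\lambda(x_3)\phi(x')\eta(x'/\ep)\theta(x'/\ep^2)$ with $\lambda\in C^\infty_c(-\tfrac12,\tfrac12)$ identifies $\partial_{x_3}\bar u=\xi_3$.

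The regime-dependent constraints form the core technical step. The main device is the commutation identity
\[
\iO \partial_{x_\alpha}u^h\,\partial_{x_3}\Psi^h\,dx=\iO \partial_{x_3}u^h\,\partial_{x_\alpha}\Psi^h\,dx,\qquad \Psi^h\in C^\infty_c(\Omega),
\]
combined with scale-adapted choices of $\Psi^h$. For (i), taking $\Psi^h=\ep(h)\lambda(x_3)\phi(x')\tilde{\tilde\rho}(x'/\ep)$ with $\tilde\rho:=\partial_{y_\alpha}\tilde{\tilde\rho}$ makes the right-hand side vanish like $\ep(h)/h\to 0$, leaving $\int\xi_3\lambda\phi\tilde\rho(y)\,dx\,dy\,dz=0$ for every zero-average $Q$-periodic $\tilde\rho$; the analogous argument on scale $\ep^2$ yields the $z$-version, and selecting the representative of $\bar u$ with zero $y$- and $z$-mean gives $\partial_{y_i}\bar u=\partial_{z_i}\bar u=0$. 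For (ii) one has $\ep(h)/h\to 1/\gamma_1\ne 0$, so the right-hand side contributes a nonzero limit which, after $z$-periodicity of $u_2$ eliminates its contribution, produces $\partial_{x_3}\bar u=\gamma_1^{-1}\partial_{x_3}u_1$, i.e.\ $\bar u=u_1/\gamma_1$. For (iii), the $\ep^2$-version of (i) still applies and gives $\partial_{z_i}\bar u=0$; to get $\partial_{x_3}u_1=0$, take $\Psi^h=\lambda(x_3)\phi(x')\tilde\rho(x'/\ep)$ with no prefactor, so that $\partial_{x_3}u^h\cdot\partial_{x_\alpha}\Psi^h$ is of order $h/\ep(h)=\gamma_1=0$, while the left-hand side reduces by $z$-periodicity of $u_2$ to $\int\partial_{y_\alpha}u_1\,\lambda'\phi\tilde\rho\,dx\,dy=0$; two successive integrations by parts (in $y_\alpha$ and in $x_3$) together with a Fourier density argument in $y$ then yield $\partial_{x_3}u_1=0$.

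The main obstacle is the bookkeeping required to juggle three scales simultaneously and the selection of canonical representatives of $u_1,u_2,\bar u$ within their natural equivalence classes (zero $y$-mean for $u_1$, zero $z$-mean for $u_2$, zero $x_3$-independent part for $\bar u$) so that the constraints of the theorem hold literally rather than only modulo this freedom.
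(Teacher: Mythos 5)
The paper itself omits the proof of this theorem, deferring to \cite[Theorem 6.3.3]{neukammPHD}, so your proposal has to be judged against the generalization of Neukamm's two-scale argument that the authors have in mind. Your overall strategy is the correct one: deduce $x_3$-independence of $u$ from $\|\partial_{x_3}u^h\|_{L^2}=O(h)$, apply Allaire--Briane compactness to $\nh u^h$ and to the renormalized fluctuation $v^h=h^{-1}(u^h-\fint u^h\,dx_3)$, identify $\xi_3=\partial_{x_3}\bar u$ by integrating by parts in $x_3$, and then use the commutation identity $\iO \partial_{x_\alpha}u^h\,\partial_{x_3}\Psi^h=\iO\partial_{x_3}u^h\,\partial_{x_\alpha}\Psi^h$ with scale-adapted test functions to extract the regime-dependent constraints. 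This is exactly the expected route.

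There is, however, a genuine gap in the way you handle the constraints involving the $z$-variable, which affects regimes (i), (ii) and (iii). Testing the commutation identity with $\Psi^h=\ep(h)\lambda\phi\,\tilde{\tilde\rho}(x'/\ep)$ gives, after passing to the limit, $\int\xi_3\,\lambda\phi\,\tilde\rho(y)\,dz\,dy\,dx=0$; this only controls $\int_Q\xi_3\,dz$, not $\xi_3$ itself, and dually the test $\Psi^h=\ep^2(h)\lambda\phi\,\tilde{\tilde\rho}(x'/\ep^2)$ only controls $\int_Q\xi_3\,dy$. These two facts do not force $\xi_3$ (and hence $\bar u$) to be independent of $y$ and $z$: the function $\xi_3(y,z)=\sin(2\pi y_1)\sin(2\pi z_1)$ has zero $y$-average and zero $z$-average yet depends nontrivially on both, and no choice of representative for $\bar u$ removes this obstruction. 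To close the gap you must test with \emph{joint} oscillations, e.g.\ $\Psi^h=\ep^2(h)\lambda(x_3)\phi(x')\kappa(x'/\ep(h))\tilde{\tilde\sigma}(x'/\ep^2(h))$ with $\kappa$ an arbitrary $Q$-periodic smooth function and $\sigma:=\partial_{z_\alpha}\tilde{\tilde\sigma}$; then $\partial_{x_\alpha}\Psi^h=\lambda\phi\,\kappa\sigma(x'/\ep^2)+O(\ep)$, the other side of the commutation identity vanishes since $\ep^2(h)/h\to 0$ (here $\gamma_2=+\infty$ is used), and one concludes $\intoyz{\xi_3\,\lambda\phi\,\kappa(y)\sigma(z)}=0$ for all such $\kappa,\sigma$, which \emph{does} give $\partial_{z_i}\bar u=0$. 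Once $z$-independence is established, your simple $\tilde\rho(y)$-test then yields $\partial_{y_i}\bar u=0$ in case (i) and $\bar u=u_1/\gamma_1$ in case (ii) as you describe (noting $\int_Q \partial_{z_\alpha}u_2\,dz=0$ by periodicity). Your derivation of $\partial_{x_3}u_1=0$ in case (iii) is correct modulo the same observation that $z$-independence must be obtained first, plus the (routine) normalization $\int_Q u_1\,dy=0$.

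Two smaller points: the commutation identity requires $\Psi^h$ to have compact support in $\Omega$ (in particular in $x_3$), which should be stated explicitly since $\lambda$ must then be $C^\infty_c(-\tfrac12,\tfrac12)$ and the integration by parts in $x_3$ in the identification of $\partial_{x_3}\bar u=\xi_3$ then produces no boundary terms; and the Poincar\'e inequality for the zero-$x_3$-mean function $v^h$ gives $\|v^h\|_{L^2}\leq C\|\partial_{x_3}u^h/h\|_{L^2}$ with a constant $C$, not $C=1$.
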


In the last part of this section we collect some properties of sequences having unbounded $L^2$ norms but whose oscillations on the scale $\ep$ or $\ep^2$ are uniformly controlled. Arguing as in \cite[Lemmas 3.6--3.8]{hornung.neukamm.velcic}, we highlight the multi-scale oscillatory behavior of our sequences by testing them against products of maps with compact support and oscillatory functions with vanishing average in their periodicity cell. 
In the proof of Theorem \ref{thm:limit stresses} we refer to \cite[Proposition 3.2]{hornung.neukamm.velcic} and \cite[Proposition 3.2]{velcic}, so for simplicity we introduce the notation needed in those papers.
 \begin{definition}
 \label{def:oscillatory}
 Let $\tilde{f}\in L^2(\omega\times Q)$ be such that
 $$\int_Q\tilde{f}(\cdot,y)\,dy=0\quad\text{a.e. in }\omega.$$
 We write
 \emph{$$f^h\oscy \tilde{f}$$}
 if 
 $$\lim_{h\to 0} \int_{\omega} f^h(x')\varphi(x')g\Big(\frac{x'}{\ep(h)}\Big)\,dx'=\int_{\omega}\int_{Q}{\tilde{f}(x',y)\varphi(x')g(y)}\,dy\,dx'$$
 for every $\varphi\in C^{\infty}_c(\omega)$ and $g\in C^{\infty}_{\rm per}(Q)$, with $\int_Q g(y)\,dy=0$.\\
 
\medskip

 Let $\{f^h\}\subset L^2(\omega)$ and let $\tilde{\tilde{f}}\in L^2(\omega\times Q\times Q)$ be such that
 $$\int_Q\tilde{\tilde{f}}(\cdot,\cdot,z)\,dz=0\quad\text{a.e. in }\omega\times Q.$$ We write
 \emph{$$f^h\oscz \tilde{\tilde{f}}$$}
 if 
 $$\lim_{h\to 0} \int_{\omega} f^h(x')\psi\Big(x',\frac{x'}{\ep(h)}\Big)\varphi\Big(\frac{x'}{\ep^2(h)}\Big)\,dx'=\intomyz{\tilde{\tilde{f}}(x',y,z)\psi(x',y)\varphi(z)}$$
 for every $\psi\in C^{\infty}_c(\omega; C^{\infty}_{\rm per}(Q))$ and $\varphi\in C^{\infty}_{\rm per}(Q)$, with $\int_Q \varphi(z)\,dz=0$.\\
 
  \medskip
 
 \end{definition}
 {\begin{remark}
 \label{remark:osc}
 As a direct consequence of the definition of multiscale convergence and density arguments, if $\{f^h\}\subset L^2(\omega)$, then
 $$f^h\swkts f\quad\text{weakly 2-scale}$$
if and only if
 $$f^h(x)\oscy f(x)-\iQ f(x,y)\,dy.$$
 Analogously, 
 $$f^h\swks \tilde{f}\quad\text{weakly 3-scale}$$
if and only if
 $$f^h(x)\oscz \tilde{f}-\iQ \tilde{f}(x,y,z)\,dz.$$
 \end{remark}}

 We recall finally \cite[Lemma 3.7 and Lemma 3.8]{hornung.neukamm.velcic}.
 \begin{lemma}
 \label{lemma:pw-osc}
 Let $\{f^h\}\subset L^{\infty}(\omega)$ and $f^0\in L^{\infty}(\omega)$ be such that 
 $$f^h\overset{*}{\wk}f^0\quad\text{weakly-* in }L^{\infty}(\omega).$$
 Assume that $f^h$ are constant on each cube $Q(\ep(h)z,\ep(h))$, with $z\in\Z^2$. If $f^0\in W^{1,2}(\omega)$, then
 $$\frac{f^h}{\ep(h)}\oscy-(y\cdot\nabla')f^0.$$
 \end{lemma}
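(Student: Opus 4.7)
The plan is to exploit the piecewise-constant structure of $f^h$ on $\ep(h)$-cubes together with the zero-average property of the oscillatory test function, so that the leading-order Taylor expansion of the smooth factor $\varphi$ produces the desired gradient. Fix $\varphi\in C^{\infty}_c(\omega)$ and $g\in C^{\infty}_{\rm per}(Q)$ with $\int_Q g(y)\,dy=0$, and denote by $f^h_z$ the constant value of $f^h$ on $\qez$. Let $I_h:=\{z\in\Z^2:\qez\cap\mathrm{supp}\,\varphi\neq\emptyset\}$; for $h$ small enough, every $z\in I_h$ satisfies $\qez\subset\omega$. Decomposing the integral as a sum and changing variables $x'=\ep(h)(z+y)$, the quantity to be analyzed becomes
\begin{equation*}
\int_\omega \frac{f^h(x')}{\ep(h)}\varphi(x')g\Big(\frac{x'}{\ep(h)}\Big)dx' \;=\; \ep(h)\sum_{z\in I_h}f^h_z\int_Q \varphi(\ep(h)(z+y))\,g(y)\,dy.
\end{equation*}

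Next, I Taylor expand $\varphi$ around $\ep(h)z$: writing
$\varphi(\ep(h)(z+y))=\varphi(\ep(h)z)+\ep(h)(y\cdot\nabla')\varphi(\ep(h)z)+O(\ep^2(h)\|D^2\varphi\|_\infty)$
uniformly for $y\in Q$, the constant term is annihilated by the mean-zero property of $g$. Plugging back and absorbing the measure factor $\ep^2(h)=|\qez|$, the main contribution equals
\begin{equation*}
\sum_{z\in I_h}\int_{\qez}f^h(x')\left[\int_Q (y\cdot\nabla')\varphi(\ep(h)z)\,g(y)\,dy\right]dx'.
\end{equation*}
Since $(y\cdot\nabla')\varphi(\ep(h)z)-(y\cdot\nabla')\varphi(x')=O(\ep(h))$ uniformly over $x'\in\qez$ and $y\in Q$, this further equals
\begin{equation*}
\int_\omega f^h(x')\int_Q (y\cdot\nabla')\varphi(x')\,g(y)\,dy\,dx' + o(1).
\end{equation*}
The Taylor remainder contributes at most $O(\ep(h))\,\|f^h\|_{L^\infty}\to 0$, since $|I_h|\le C/\ep^2(h)$ is counterbalanced by the two extra factors of $\ep(h)$ produced by the expansion and the change of variables.

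Finally, because $x'\mapsto\int_Q(y\cdot\nabla')\varphi(x')\,g(y)\,dy$ is a bounded, compactly supported function (hence in $L^1(\omega)$), the weak-$*$ convergence $f^h\overset{*}{\wk}f^0$ in $L^\infty(\omega)$ yields convergence to $\int_\omega f^0(x')\int_Q(y\cdot\nabla')\varphi(x')\,g(y)\,dy\,dx'$. Integrating by parts in $x'$ (no boundary terms, since $\varphi\in C^\infty_c(\omega)$) and using $f^0\in W^{1,2}(\omega)$ converts $(y\cdot\nabla')\varphi$ into $-(y\cdot\nabla')f^0$, producing the claimed identity. The main obstacle is controlling the error terms against the divergent prefactor $1/\ep(h)$; this is resolved precisely by the two independent cancellations, namely the zero-mean of $g$ killing the $O(1)$ term and the smoothness of $\varphi$ controlling the $O(\ep(h))$ correction. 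Boundary cubes cause no difficulty because $\varphi$ has compact support.
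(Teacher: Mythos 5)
Your argument is correct and complete: the decomposition into $\ep(h)$-cubes, the cancellation of the zeroth-order Taylor term against $\int_Q g=0$, the uniform control of the remainders by $\|f^h\|_{L^\infty}$ (bounded by Banach--Steinhaus), the passage to the limit via weak-$*$ convergence against the fixed test function $\sum_i(\int_Q y_i g)\,\partial_{x_i}\varphi$, and the final integration by parts using $f^0\in W^{1,2}(\omega)$ and $\varphi\in C^\infty_c(\omega)$ all hold. Note that the paper does not prove this lemma itself but simply recalls it from \cite[Lemma 3.7]{hornung.neukamm.velcic}; your proof is the standard argument used there.
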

 
 \begin{lemma}
 \label{lemma:sobolev-osc}
 Let $\{f^h\}\subset W^{1,2}(\omega)$, $f^0\in W^{1,2}(\omega)$, and $\phi\in L^2(\omega;W^{1,2}_{\rm per}(Q))$ be such that 
 $$f^h\wk f^0\quad\text{weakly in }W^{1,2}(\omega),$$
 and 
 $$\nabla'f^h\swkts\nabla'f^0+\nabla_y\phi \quad\text{weakly 2-scale},$$
 with $\iQ\phi(x',y)\,dy=0$ for a.e. $x'\in\omega$. Then, 
 $$\frac{f^h}{\ep(h)}\oscy\phi.$$
 \end{lemma}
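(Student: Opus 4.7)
The plan is to exploit the zero-average hypothesis on the test function $g$ to rewrite it as a divergence in the fast variable, integrate by parts, and then pass to the limit using the 2-scale hypothesis on $\nabla' f^h$ together with the strong $L^2$ convergence of $f^h$ coming from Rellich.

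First, I fix test functions $\varphi \in C_c^{\infty}(\omega)$ and $g \in C^{\infty}_{\rm per}(Q)$ with $\int_Q g(y)\,dy = 0$. Since $g$ has zero mean, I can solve the periodic Poisson equation $\Delta_y \Psi = g$ in $Q$ with $\Psi \in C^{\infty}_{\rm per}(Q)$, and set $G := \nabla_y \Psi \in C^{\infty}_{\rm per}(Q;\R^2)$, which satisfies $\mathrm{div}_y G = g$. The key algebraic identity is then
\[
g\!\left(\tfrac{x'}{\ep(h)}\right) \;=\; \ep(h)\,\mathrm{div}'\!\Bigl[G\!\left(\tfrac{x'}{\ep(h)}\right)\Bigr].
\]

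Second, I insert this identity and integrate by parts (legitimate since $\varphi \in C_c^{\infty}(\omega)$), obtaining
\[
\int_{\omega} \frac{f^h(x')}{\ep(h)}\,\varphi(x')\, g\!\left(\tfrac{x'}{\ep(h)}\right) dx'
= -\!\int_{\omega}\! G\!\left(\tfrac{x'}{\ep(h)}\right)\!\cdot\!\nabla' f^h(x')\,\varphi(x')\, dx'
-\!\int_{\omega}\! f^h(x')\, G\!\left(\tfrac{x'}{\ep(h)}\right)\!\cdot\!\nabla'\varphi(x')\, dx'.
\]
For the first term, the hypothesis $\nabla' f^h \swkts \nabla' f^0 + \nabla_y \phi$, applied with the admissible test function $(x',y) \mapsto G(y)\,\varphi(x')$, yields the limit
\[
-\int_{\omega}\!\int_Q G(y)\cdot\bigl[\nabla' f^0(x') + \nabla_y \phi(x',y)\bigr]\varphi(x')\,dy\,dx'.
\]
For the second term, the Rellich--Kondrachov embedding gives $f^h \to f^0$ strongly in $L^2(\omega)$, while $G(\cdot/\ep(h)) \cdot \nabla'\varphi$ converges weakly in $L^2(\omega)$ to $\bar{G}\cdot\nabla'\varphi$, where $\bar{G} := \int_Q G(y)\,dy$; hence this term converges to
\[
-\int_{\omega} f^0(x')\,\bar{G}\cdot\nabla'\varphi(x')\,dx'.
\]

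Third, I use an integration by parts in the fast variable on the torus (no boundary contribution by periodicity):
\[
\int_Q G(y)\cdot\nabla_y\phi(x',y)\,dy
= -\int_Q \mathrm{div}_y G(y)\,\phi(x',y)\,dy
= -\int_Q g(y)\,\phi(x',y)\,dy.
\]
Substituting this into the limit of the first term produces exactly the desired term $\int_{\omega}\int_Q \phi(x',y)\varphi(x')g(y)\,dy\,dx'$, together with a spurious contribution $-\int_{\omega}\bar{G}\cdot\nabla' f^0(x')\,\varphi(x')\,dx'$. Combined with the limit of the second term, this spurious contribution equals $-\int_{\omega}\bar{G}\cdot\nabla'(f^0\varphi)\,dx'$, which vanishes by one final integration by parts in $\omega$ (legitimate because $\varphi$ has compact support). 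This proves $f^h/\ep(h) \oscy \phi$.

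The only nontrivial input is the periodic Poisson solvability (standard) and the appropriate pairing of strong and weak limits. There is no real obstacle beyond keeping track of the two integration-by-parts steps and verifying that the $\bar{G}$-terms cancel; the cancellation is forced precisely because we subtracted off the mean of $\phi$ in $y$ via the assumption $\int_Q\phi(x',y)\,dy=0$ (used implicitly in the fact that the spurious piece has the form of a total derivative of $f^0\varphi$).
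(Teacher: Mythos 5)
Your proof is correct, and it follows what is in fact the standard route for this kind of statement (solve the periodic Poisson problem to write the zero-mean test function $g$ as a fast divergence, then integrate by parts and feed the resulting $\nabla' f^h$ term into the two-scale hypothesis). The paper itself does not reprove this; it cites \cite[Lemma 3.8]{hornung.neukamm.velcic}, where essentially this argument appears. All the steps check out: the identity $g(x'/\ep(h))=\ep(h)\,\mathrm{div}'[G(x'/\ep(h))]$, the admissibility of $(x',y)\mapsto G(y)\varphi(x')$ as a two-scale test function, the Rellich step, and the fast-variable integration by parts turning $\int_Q G\cdot\nabla_y\phi\,dy$ into $-\int_Q g\,\phi\,dy$.

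One remark is off, though it does not invalidate the proof: the final sentence attributes the cancellation of the $\bar G$-terms to the normalization $\int_Q\phi(x',y)\,dy=0$. That is not the mechanism. Since $G=\nabla_y\Psi$ with $\Psi$ $Q$-periodic, one has $\bar G=\int_Q\nabla_y\Psi\,dy=0$ automatically, so each of the two ``spurious'' contributions vanishes on its own (alternatively, their sum is $-\int_\omega\bar G\cdot\nabla'(f^0\varphi)\,dx'$, which is zero by the divergence theorem regardless of $\bar G$). The hypothesis $\int_Q\phi\,dy=0$ plays no role in your computation; it is merely the normalization that selects a canonical representative of $\phi$ (which is otherwise determined only up to a function of $x'$, a freedom invisible to the $\oscy$ pairing since $g$ has zero mean). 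You should remove or correct that closing remark, but the argument itself stands.
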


\section{Identification of the limit stresses}
\label{section:E}

Due to the linearized behavior of the nonlinear elastic energy around the set of proper rotations, a key point in the proof of the liminf inequality \eqref{eq:intro-liminf} is to establish a characterization of the weak limit, in the sense of 3-scale-dr convergence, of the sequence of linearized elastic stresses
$$E^h:=\frac{\sqrt{(\nh u^h)^T\nh u^h}-Id}{h}.$$
We introduce the following classes of functions:
\begin{align}
\label{eq:c1inf}
\mathcal{C}_{\gamma_1,+\infty}&:=\Big\{U\in L^2(\Omega\times Q\times Q;\mthree):\\
\nonumber &\text{there exists } \phi_1\in L^2\big(\omega; W^{1,2}\big(\big(-\tfrac12,\tfrac12\big);W^{1,2}_{\rm per}(Q;\R^3))\big)\\
\nonumber &\text{and }\phi_2\in L^2(\Omega\times Q;W^{1,2}_{\rm per}(Q;\R^3))\\
\nonumber &\text{such that }
U={\rm sym }\Big(\nabla_y \phi_1\Big|\frac{\partial_{x_3}\phi_1}{\gamma_1}\Big)+{\rm sym }(\nabla_z\phi_2|0)\Big\},
\end{align}
\begin{align}
\label{equation:cinfinf}
\mathcal{C}_{+\infty,+\infty}&:=\Big\{U\in L^2(\Omega\times Q\times Q;\mthree):\\
\nonumber &\text{there exists } d\in L^2(\Omega;\R^3),\,\phi_1\in L^2\big(\Omega;W^{1,2}_{\rm per}(Q;\R^3)\big)\\
\nonumber &\text{and }\phi_2\in L^2(\Omega\times Q;W^{1,2}_{\rm per}(Q;\R^3))\\
\nonumber &\text{such that }
U={\rm sym }(\nabla_y \phi_1|d)+{\rm sym }(\nabla_z\phi_2|0)\Big\},
\end{align}
and
\begin{align}
\label{eq:c0inf}
\mathcal{C}_{0,+\infty}&:=\Big\{U\in L^2(\Omega\times Q\times Q;\mthree):\\
\nonumber &\text{there exists } \xi\in L^2(\Omega;W^{1,2}_{\rm per}(Q;\R^2)),\,\eta\in L^2(\omega;W^{2,2}_{\rm per}(Q)),\\
\nonumber&g_i\in L^2(\Omega\times Y),\,i=1,2,3,\,\text{ and }\phi\in L^2(\Omega\times Q;W^{1,2}_{\rm per}(Q;\R^3))\text{ such that }\\
\nonumber &U={\rm sym }\Bigg(\begin{array}{cc}\nabla_y\xi+x_3\nabla_y^2\eta&g_1\\
\phantom{g}&g_2\\
g_1\quad g_2&g_3\end{array}\Big)+{\rm sym }(\nabla_z\phi|0)\Bigg\}.
\end{align}

We now state the main result of this section.
\begin{theorem}
\label{thm:limit stresses}
Let $\gamma_1\in[0,+\infty]$ and $\gamma_2=+\infty$. Let $\{u^h\}\subset W^{1,2}(\Omega;\R^3)$ be a sequence of deformations satisfying \eqref{eq:unif-bending-en} and converging to a deformation $u$ in the sense of Theorem \ref{thm:compactness-def}. Then there exist 
\mbox{$E\in L^2(\Omega\times Q\times Q;\mthree_{\rm sym})$}, $B\in L^2(\omega;\M^{2\times 2})$, and $U\in C_{\gamma_1, {+\infty}}$, such that, up to the extraction of a (not relabeled) subsequence,
$$E^h\wks E\quad\text{weakly dr-3-scale},$$
where
$$E(x,y,z)=\Big(\begin{array}{cc}x_3\Pi^u(x')+{\rm sym\,}B(x')&0\\
0&0\end{array}\Big)+U(x,y,z),$$
for almost every $(x,y,z)\in\Omega\times Q\times Q$, with
\be{eq:def-Pi}\Pi_{\alpha,\beta}^u(x'):=-\partial^2_{\alpha,\beta}u(x')\cdot n_u(x')\,\text{for }\alpha,\beta=1,2,\ee
and $n_u(x'):=\partial_1 u(x')\wedge\partial_2 u(x')$ for every $x'\in\omega$.
\end{theorem}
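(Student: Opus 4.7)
The plan is to extract successive oscillatory corrections at the scales $\ep(h)$ and $\ep^2(h)$ from the sequence $\{E^h\}$, after first establishing its $L^2$-boundedness and the structure of its averaged part.

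First I would observe that by (H2) and \eqref{eq:unif-bending-en} we have $\|\dist(\nh u^h;SO(3))\|_{L^2(\Omega)}\leq Ch$, so the polar decomposition $\nh u^h=R^h\sqrt{(\nh u^h)^T\nh u^h}$ yields $|E^h|\leq C|\nh u^h-R^h|/h$ on the set where the decomposition is valid, and a direct estimate takes care of the complementary set. Consequently $\{E^h\}$ is bounded in $L^2(\Omega;\mthree_{\mathrm{sym}})$, and by compactness of classical 3-scale convergence (see \cite{allaire.briane}) I can extract a not-relabeled subsequence with $E^h\wks E$ for some $E\in L^2(\Omega\times Q\times Q;\mthree_{\mathrm{sym}})$.

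Next I would identify the component $\overline E(x):=\iQQ E(x,y,z)\,dy\,dz$. Using Lemma~\ref{lemma:rotations1} with $\delta\sim\ep(h)$ (or $\delta\sim\ep^2(h)$ when $\gamma_1=0$), I produce piecewise constant rotations $R^h$ and Sobolev approximants $\tilde R^h$ obeying $\|\nh u^h-R^h\|_{L^2}+\|R^h-\tilde R^h\|_{L^2}+h\|\nabla'\tilde R^h\|_{L^2}\leq Ch$. As in \cite[Thm.~4.1]{friesecke.james.muller} and \cite{hornung.neukamm.velcic,velcic}, the sequence $(\tilde R^h-R_u)/h$ is bounded in $W^{1,2}(\omega;\mthree)$, converges weakly to a skew-valued map $A\in W^{1,2}(\omega;\mthree)$ whose column derivatives encode the second fundamental form, and the identity $h E^h=\mathrm{sym}\bigl((R^h)^T(\nh u^h-R^h)\bigr)+O(h)$ together with the first-order expansion of $R^h$ around $\tilde R^h$ give, after testing with $x_3$-odd and $x_3$-even functions independent of $y,z$, the averaged form $\overline E(x)=\bigl(\begin{smallmatrix} x_3\Pi^u(x')+\mathrm{sym}\,B(x') & 0 \\ 0 & 0\end{smallmatrix}\bigr)$, where $B\in L^2(\omega;\M^{2\times2})$ is the membrane stretch and the vanishing third row/column follows from the fact that the limiting skew map $A$ has structure compatible with $R_u^T A$ being in-plane skew.

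To identify the oscillatory remainder $U:=E-\overline E$ I would test $E^h$ against products $\varphi(x)\chi(x'/\ep(h))\psi(x'/\ep^2(h))$ with $\varphi\in C^\infty_c(\Omega)$ and zero-mean $\chi,\psi\in C^\infty_{\mathrm{per}}(Q)$, then rewrite $hE^h=(R^h)^T(\nh u^h-R^h)+O(h)$ and analyse each piece with Lemmas~\ref{lemma:pw-osc}--\ref{lemma:sobolev-osc}. The $\ep$-scale oscillations of $R^h$ deliver a field $\phi_1$ with the regularity dictated by the regime: in the case $\gamma_1=+\infty$ the thickness variable decouples and $\phi_1\in L^2(\Omega;W^{1,2}_{\mathrm{per}}(Q;\R^3))$ with an independent correction $d\in L^2(\Omega;\R^3)$ in the third column; for $0<\gamma_1<+\infty$ the commensurability $h\sim\ep(h)$ links the $x_3$- and $y$-derivatives and forces the coupled form $(\nabla_y\phi_1\,|\,\partial_{x_3}\phi_1/\gamma_1)$; for $\gamma_1=0$, $\phi_1$ is $x_3$-independent but a second-gradient correction $x_3\nabla_y^2\eta$ and auxiliary fields $g_i$ appear, obtained via \cite[Prop.~3.2]{velcic}. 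In all three regimes the $\ep^2$-scale oscillations yield $\phi_2\in L^2(\Omega\times Q;W^{1,2}_{\mathrm{per}}(Q;\R^3))$ contributing $\mathrm{sym}(\nabla_z\phi_2|0)$; the $|0$ in the third column reflects $\gamma_2=+\infty$, i.e.\ that the plate thickness is much smaller than $\ep^2(h)$, so out-of-plane oscillations at scale $\ep^2$ are frozen.

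The main obstacle, as flagged in the introduction, is the mismatch between the natural rigidity scale and the scale on which the test functions oscillate. For $\gamma_1\in(0,+\infty]$ the rigidity cube has size $\ep(h)$ but the fine test functions oscillate on $\ep^2(h)$; I would handle this by partitioning the $\ep^2(h)$-cubes centered on $\ep^2(h)\Z^2$ into ``good'' ones entirely contained in a single $\ep(h)$-cube and ``bad'' ones crossing the coarser grid, and showing $|\omega\cap\bigcup\text{bad}|=O(\ep(h))$, which is negligible after division by $h^2$ in the relevant estimates because $h\ll\ep(h)$. For $\gamma_1=0$ the analogous splitting is unavailable, and one must instead absorb the coarse oscillations into the second-gradient field $\nabla_y^2\eta$ in \eqref{eq:c0inf}, which is precisely why this case forces the more elaborate structure of $\mathcal C_{0,+\infty}$ and why $\gamma_2=+\infty$ is still needed to close the argument.
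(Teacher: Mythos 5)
Your proposal follows essentially the same route as the paper: establish $L^2$-boundedness of $\{E^h\}$ (the paper does this via the scaled strains $G^h=((R^h)^T\nh u^h-Id)/h$ and the expansion $\sqrt{(Id+hF)^T(Id+hF)}=Id+h\,\mathrm{sym}\,F+O(h^2)$, which is equivalent to your polar-decomposition estimate), extract a dr-3-scale limit, reduce the 2-scale averaged part to the known single-scale results of Hornung--Neukamm--Vel\v{c}i\'c and Vel\v{c}i\'c, and then isolate the $\ep^2$-oscillatory contribution via zero-mean oscillatory test functions. Your good/bad cube decomposition for $\gamma_1\in(0,+\infty]$ — subdividing $\ep^2(h)$-cubes by whether they sit inside a single $\ep(h)$-cube, and bounding the bad region by $O(\ep(h))$ — is exactly the device in Step 1 of the paper's Case 1.

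There is, however, a genuine gap in your treatment of $\gamma_1=0$. You correctly note that the good/bad cube splitting is unavailable, but your proposed replacement — ``absorbing coarse oscillations into the second-gradient field $\nabla_y^2\eta$'' — does not actually close the argument. The field $\eta$ (together with $\xi,g_i$) only describes the 2-scale ($y$-)structure obtained from \cite[Prop.~3.2]{velcic}; it plays no role in showing that the $\ep^2$-oscillatory remainder reduces to $\mathrm{sym}(\nabla_z\phi|0)$. The paper's actual mechanism is different: after reducing to the identity $\lim_{h\to0}\int_\omega \frac{(R^h)'}{h}:\np\varphi^\ep\,\psi=0$, one adds and subtracts the Sobolev approximant $\tilde R^h/h$ from Lemma~\ref{lemma:rotations1}. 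One piece is controlled by $\ep(h)\,\|(R^h-\tilde R^h)/h\|_{L^2}\le C\ep(h)\to0$, and the other uses Lemma~\ref{lemma:sobolev-osc} to turn $\tilde R^h/\ep(h)$ into a 2-scale limit $S$, after which the extra factor $\ep^2(h)/h\to 1/\gamma_2=0$ kills the term. Without this $\tilde R^h$ decomposition (or an equivalent mechanism that exploits both the $W^{1,2}$-approximation of $R^h$ and the hypothesis $\gamma_2=+\infty$), the $\gamma_1=0$ case does not close. A secondary, stylistic point: you propose to isolate $\overline E=\iint_{Q\times Q}E\,dy\,dz$ first and treat $U=E-\overline E$ as one block, whereas the paper first averages only over $z$ (identifying $\int_Q E\,dz$ via the single-scale result) and then handles $E-\int_Q E\,dz$; the latter ordering is what lets you quote \cite[Prop.~3.2]{hornung.neukamm.velcic} and \cite[Prop.~3.2]{velcic} cleanly, and you should adopt it to avoid keeping track of the nonzero $y$-average of $\partial_{x_3}\phi_1$.
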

\begin{proof}
Let $\{u^h\}$ be as in the statement of the theorem. By Theorem \ref{thm:compactness-def} the map $u\in W^{2,2}(\omega;\R^3)$ is an isometry, and
\be{eq:scaled-grad-conv}\nh u^h\to (\nabla'u|n_u)\quad\text{strongly in }L^2(\Omega;\mthree).\ee
For simplicity, we subdivide the proof into three cases, corresponding to the three regimes $0<\gamma_1<+\infty$, $\gamma_1=+\infty$, and $\gamma_1=0$, and each case will be treated in multiple steps.

\medskip

\noindent\textbf{Case 1: $0<\gamma_1<+\infty$ and $\gamma_2=+\infty$}.\\
Applying Lemma \ref{lemma:rotations1} with $\delta(h)=\ep(h)$, we construct two sequences $\{R^h\}\subset L^{\infty}(\omega; SO(3))$ and $\{\tilde{R}^h\}\subset W^{1,2}(\omega;\mthree)$ such that $R^h$ is piecewise constant on every cube of the form $Q(\ep(h)z,\ep(h))$, with $z\in \Z^2$, and 
\begin{align}
\label{eq:approximate-rot}
&\|\nh u^h-R^h\|_{L^2(\Omega;\mthree)}^2+\|R^h-\tilde{R}^h\|_{L^2(\omega;\mthree)}^2\\
\nonumber &\quad+h^2\|\nabla' \tilde{R}^h\|_{L^2(\omega;\mthree\times\mthree)}^2\leq C\|\dist(\nh u^h; SO(3))\|_{L^2(\Omega)}^2.
\end{align}
By \eqref{eq:unif-bending-en} and \eqref{eq:approximate-rot}, there holds
$$\nh u^h-R^h\to 0\quad\text{strongly in }L^2(\Omega;\mthree),$$
$$R^h-\tilde{R}^h\to 0\quad\text{strongly in }L^2(\Omega;\mthree),$$
and $\{\tilde{R}^h\}$ is bounded in $W^{1,2}(\omega;\mthree)$.
Therefore, by \eqref{eq:scaled-grad-conv} and the uniform boundedness of the sequence $\{R^h\}$ in $L^{\infty}(\omega;\mthree)$, and in particular in $L^2(\omega;\mthree)$,
\be{eq:pwconst-conv}R^h\to R\quad\text{strongly in }L^2(\omega;\mthree),\qquad R^h\wk^* R\quad\text{weakly* in }L^{\infty}(\omega;\mthree),\ee
and
\be{eq:sobolev-conv}\tilde{R}^h\wk R\quad\text{weakly in }W^{1,2}(\omega;\mthree),\ee
where
\be{eq:def-lim-R}R:=(\nabla'u|n_u).\ee

In order to identify the multiscale limit of the linearized stresses, we argue as in \cite[Proof of Proposition 3.2]{hornung.neukamm.velcic}, and we introduce the scaled linearized strains 
\be{eq:Gh}G^h:=\frac{(R^h)^T\nh u^h-Id}{h}.\ee
By \eqref{eq:unif-bending-en} and \eqref{eq:approximate-rot} the sequence $\{G^h\}$ is uniformly bounded in $L^2(\Omega;\mthree)$. By standard properties of 3-scale convergence (see \cite[Theorem 2.4]{allaire.briane}) there exists $G\in L^2(\Omega\times Q\times Q;\mthree)$ such that, up to the extraction of a (not relabeled) subsequence,
\be{eq:gh-3s}G^h\swks G\quad\text{weakly 3-scale}.\ee
By the identity
$$\sqrt{(Id+hF)^T(Id+hF)}=Id+h\,{\rm{sym}}\,F+O(h^2),$$
and observing that
$$E^h=\frac{\sqrt{(\nh u^h)^T\nh u^h}-Id}{h}=\frac{\sqrt{(Id+hG^h)^T(Id+hG^h)}-Id}{h},$$ there holds 
\be{eq:E-G} E=\rm{sym}\,G.\ee
 
By \eqref{eq:gh-3s}, it follows that
$$G^h\swkts\iQ G(x,y,z)\,dz\quad\text{weakly 2-scale.}$$
Therefore, by \cite[Proposition 3.2]{hornung.neukamm.velcic} there exist $B\in L^2(\omega;\M^{2\times 2})$ and $\phi_1\in$\newline $L^2\big(\omega; 
W^{1,2}\big(\big(-\tfrac12,\tfrac12\big);W^{1,2}_{\rm per}(Q;\R^3)\big)\big)$ such that
\begin{align}
\label{eq:first-part-E-g1}
&{\rm sym}\,\iQ G(x,y,\xi)\,d\xi\\
\nonumber&\quad=\Big(\begin{array}{cc}x_3\Pi^u(x')+{\rm sym }\,B(x')&0\\
0&0\end{array}\Big)+{\rm sym }\left(\nabla_y \phi_1(x,y)\Big|\frac{\partial_{x_3}\phi_1(x,y)}{\gamma_1}\right)
\end{align}
for a.e. $x\in\Omega$ and $y\in Y$. Thus, by \eqref{eq:E-G} and \eqref{eq:first-part-E-g1} to complete the proof we only need to prove that
\be{eq:remaining-E-gamma_1}{\rm{sym }}\,G(x,y,z)-{\rm{sym }}\,\iQ G(x,y,\xi)\,d\xi={\rm sym}\,(\nabla_z\phi_2(x,y,z)|0)\ee
for some $\phi_2\in L^2(\Omega\times Q;W^{1,2}_{\rm per}(Q;\R^3)).$

Set \be{eq:def-uh}\bar{u}^h(x'):=\int_{-\tfrac12}^{\tfrac12}u^h(x',x_3)\,dx_3\quad\text{for a.e. }x'\in\omega\ee
and define $r^h\in W^{1,2}(\Omega;\R^3)$ as
\be{eq:def-rh}u^h(x)=:\bar{u}^h(x')+hx_3\tilde{R}^h(x')e_3+hr^h(x',x_3)\quad\text{for a.e. }x\in\Omega.\ee
We remark that 
\be{eq:zh-null-average}\int_{-\tfrac12}^{\tfrac12}r^h(x',x_3)\,dx_3=0,\ee and
\be{eq:scaled-grad-dec}
\frac{\nh u^h-R^h}{h}=\Big(\frac{\nabla' \bar{u}^h-(R^h)'}{h}+x_3\nabla' \tilde{R}^h e_3\Big|\frac{(\tilde{R}^h-R^h)}{h}e_3\Big)+\nh r^h.
\ee

We first notice that by \eqref{eq:unif-bending-en}, \eqref{eq:approximate-rot}, \eqref{eq:sobolev-conv}, and \eqref{eq:zh-null-average}, the sequence $\{r^h\}$ is uniformly bounded in $W^{1,2}(\Omega;\R^3)$. Hence, by Theorem \ref{thm:mod3scales-limits} (ii) there exist $r\in W^{1,2}(\omega;\R^3)$, $\hat{\phi}_1\in L^2\big(\omega; W^{1,2}\big(\big(-\tfrac12,\tfrac12\big);W^{1,2}_{\rm per}(Q;\R^3)\big)\big)$ and $\hat{\phi}_2\in L^2(\Omega\times Q;$ $W^{1,2}_{\rm per}(Q;\R^3))$ such that, up to the extraction of a (not relabeled) subsequence,
\be{eq:rh-lim}\nh r^h\wks \Big(\nabla' r+\nabla_y \hat{\phi}_1+\nabla_z\hat{\phi}_2\Big|\frac{\partial_{x_3}\hat{\phi}_1}{\gamma_1}\Big)\quad\text{weakly dr-3-scale}.\ee
By \eqref{eq:unif-bending-en} and \eqref{eq:approximate-rot}, and since $R^h$ does not depend on $x_3$, $\big\{\frac{\nh\bar{u}^h-(R^h)'}{h}\big\}$ is bounded in $L^2(\omega;\M^{3\times 2})$. Therefore by \cite[Theorem 2.4]{allaire.briane} there exists $V\in L^2(\omega\times Q\times Q;\M^{3\times 2})$ such that, up to the extraction of a (not relabeled) subsequence,
\be{eq:V-dr-3}\frac{\nabla' \bar{u}^h-(R^h)'}{h}\swks V\quad\text{weakly 3-scale}.\ee

\noindent\emph{Case 1, Step 1: Characterization of V.}\\
In view of \eqref{eq:remaining-E-gamma_1}, we provide a characterization of
$$V(x',y,z)-\iQ V(x',y,\xi)\,d\xi.$$
 We claim that there exists $v\in L^2(\omega\times Q; W^{1,2}_{\rm per}(Q;\R^3))$ such that
\be{eq:claim-V2}
V(x',y,z)-\int_Q V(x',y,\xi)\,d\xi=\nabla_zv(x',y,z)\quad\text{for a.e. }x'\in\omega,\text{ and }y,z\in Q.
\ee
Arguing as in \cite[Proof of Proposition 3.2]{hornung.neukamm.velcic}, we first notice that by \cite[Lemma 3.7]{allaire.briane} to prove \eqref{eq:claim-V2} it is enough to show that
\be{eq:ort-z}
\intomyz {\Big(V(x',y,z)-\iQ V(x',y,\xi)\,d\xi\Big):\np \varphi(z)\psi(x',y)}=0
\ee
for every $\varphi\in C^1_{\rm per}(Q;\R^3)$ and $\psi\in C^{\infty}_c(\omega;C^{\infty}_{\rm per}(Q))$. 
Fix $\varphi\in C^1_{\rm per}(Q;\R^3)$ and $\psi\in C^{\infty}_c(\omega;C^{\infty}_{\rm per}(Q))$. We set 
$$\tilde{\varphi}^{\ep}(x'):=\ep^2(h)\varphi\Big(\frac{x'}{\ep^2(h)}\Big)\quad\text{for every }x'\in\omega.$$
Then,
\begin{align}
\label{eq:expr}
&\int_{\omega}\frac{\nabla'\bar{u}^h(x')}{h}:\np \varphi\Big(\frac{x'}{\ep^2(h)}\Big)\psi\Big(x',\frac{x'}{\ep(h)}\Big)\,dx'\\
\nonumber &\quad=\int_{\omega}\frac{\nabla'\bar{u}^h(x')}{h}:(\nabla')^{\perp} {\tilde{\varphi}}^{\ep}(x')\psi\Big(x',\frac{x'}{\ep(h)}\Big)\,dx'\\
\nonumber &\quad=\int_{\omega}\frac{\nabla'\bar{u}^h(x')}{h}:(\nabla')^{\perp} \Big[{\tilde{\varphi}}^{\ep}(x')\psi\Big(x',\frac{x'}{\ep(h)}\Big)\Big]\,dx'\\
\nonumber &\qquad-\int_{\omega}\frac{\nabla'\bar{u}^h(x')}{h}:\left[{\tilde{\varphi}}^{\ep}(x')\otimes\left((\nabla')_x^{\perp} \psi\Big(x',\frac{x'}{\ep(h)}\Big)+\frac{1}{\ep(h)}(\nabla')_y^{\perp} \psi\Big(x',\frac{x'}{\ep(h)}\Big)\right)\right]\,dx'.
\end{align}
The first term in the right-hand side of \eqref{eq:expr} is equal to zero, due to the definition of $\np$. Therefore we obtain
\begin{align}
\label{eq:almost-final}
&\int_{\omega}\frac{\nabla'\bar{u}^h(x')}{h}:\np \varphi\Big(\frac{x'}{\ep^2(h)}\Big)\psi\Big(x',\frac{x'}{\ep(h)}\Big)\,dx'\\
\nonumber&\quad=-\frac{\ep^2(h)}{h}\int_{\omega}\nabla'\bar{u}^h(x'):\Big[\varphi\Big(\frac{x'}{\ep^2(h)}\Big)\otimes(\nabla')^{\perp}_x\psi\Big(x',\frac{x'}{\ep(h)}\Big)\Big]\\
\nonumber&\qquad-\frac{\ep(h)}{h}\int_{\omega}\nabla'\bar{u}^h(x'):\Big[\varphi\Big(\frac{x'}{\ep^2(h)}\Big)\otimes(\nabla')_y^{\perp} \psi\Big(x',\frac{x'}{\ep(h)}\Big)\Big].
\end{align}
 By \eqref{eq:approximate-rot}, the regularity of the test functions, and since $\gamma_2=+\infty$, we get
\be{eq:almost-final1}\frac{\ep^2(h)}{h}\int_{\omega}\nabla'\bar{u}^h(x'):\Big[\varphi\Big(\frac{x'}{\ep^2(h)}\Big)\otimes(\nabla')^{\perp}_x\psi\Big(x',\frac{x'}{\ep(h)}\Big)\Big]\,dx'\to 0,\ee
while by \eqref{eq:scaled-grad-conv}, \eqref{eq:def-lim-R}, and the regularity of the test functions,
\begin{align}
\label{eq:almost-final2}
&\lim_{h\to 0}\frac{\ep(h)}{h}\int_{\omega}\nabla'\bar{u}^h(x'):\Big[\varphi\Big(\frac{x'}{\ep^2(h)}\Big)\otimes(\nabla')_y^{\perp} \psi\Big(x',\frac{x'}{\ep(h)}\Big)\Big]\,dx'\\
\nonumber&\quad=\frac{1}{\gamma_1}\intomyz{R'(x'):(\varphi(z)\otimes(\nabla')_y^{\perp} \psi(x',y))}=0,
\end{align}
where the latter equality is due to the periodicity of $\psi$ with respect to the $y$ variable.
Combining \eqref{eq:expr}, \eqref{eq:almost-final}, \eqref{eq:almost-final1} and \eqref{eq:almost-final2}, we conclude that
\be{eq:claim2-part1}\lim_{h\to 0}\int_{\omega}\frac{\nabla'\bar{u}^h(x')}{h}:\np \varphi\Big(\frac{x'}{\ep^2(h)}\Big)\psi\Big(x',\frac{x'}{\ep(h)}\Big)\,dx'=0.
\ee
In view of \eqref{eq:V-dr-3}, and since 
$$\intomyz {\Big(\iQ V(x',y,\xi)\,d\xi\Big):\np \varphi(z)\psi(x',y)}=0$$
by the periodicity of $\varphi$, \eqref{eq:ort-z} will be established once we show that
\be{eq:to-study}\lim_{h\to 0}\int_{\omega}\frac{(R^h)'(x')}{h}:\np \varphi\Big(\frac{x'}{\ep^2(h)}\Big)\psi\Big(x',\frac{x'}{\ep(h)}\Big)\,dx'=0.\ee

In order to prove \eqref{eq:to-study}, we adapt \cite[Lemma 3.8]{hornung.neukamm.velcic} to our framework.\\

 Since $\psi\in C^{\infty}_c(\omega;C^{\infty}_{\rm per}(Q))$ and $h\to 0$, we can assume, without loss of generality, that for $h$ small enough
$$ \dist({\rm supp }\,\psi;\partial\omega\times Q)>\Big(1+\frac{3}{\gamma_1}\Big)h.$$
 We define
 $$\ze:=\Big\{z\in\Z^2:\,Q(\ep(h)z,\ep(h))\times Q\cap{\rm supp }\,\psi\neq\emptyset\Big\},$$
 and 
 $$Q_{\ep}:=\bigcup_{z\in\ze}Q(\ep(h)z,\ep(h)).$$
 Since $0<\gamma_1<+\infty$, for $h$ small enough we have $\sqrt{2}\ep(h)<\frac{2h}{\gamma_1}$, so that
$$ \dist(Q_{\ep};\partial\omega)\geq\Big(1+\frac{3}{\gamma_1}\Big)h-\sqrt{2}\ep(h)\geq\Big(1+\frac{1}{\gamma_1}\Big)h.$$
 We subdivide
 $$\mathcal{Q}_{\ep^2}:=\Big\{Q(\ep^2(h)\lambda,\ep^2(h)):\,\lambda\in\Z^2\text{ and }Q(\ep^2(h)\lambda,\ep^2(h))\cap Q_{\ep}\neq\emptyset\Big\}$$
 into two subsets:
\begin{enumerate}
\item[(a)] ``good cubes of size $\ep^2(h)$", i.e., those which are entirely contained in a cube of size $\ep(h)$ belonging to $Q_{\ep}$, and where $(R^h)'$ is hence constant,
\item[(b)] ``bad cubes of size $\ep^2(h)$", i.e., those intersecting more than one element of $Q_{\ep}$.
\end{enumerate}
We observe that, as $\gamma_2=+\infty$,
\be{eq:last-safe-distance}\dist(\mathcal{Q}_{\ep^2};\partial\omega)\geq \dist(Q_{\ep};\partial\omega)-\sqrt{2}\ep^2(h)>h\ee
for $h$ small enough, and
\be{eq:count-zep}\# \ze\leq C\frac{|{\omega}|}{\ep^2(h)}.\ee
Moreover, if $z\in\ze$, $\lambda\in\Z^2$, and 
$$\ep^2(h)\lambda\in Q(\ep(h)z,\ep(h)-\ep^2(h)),$$ then  
$Q(\ep^2(h)\lambda,\ep^2(h))$ is a ``good cube", therefore the boundary layer of $Q(\ep(h)z,\ep(h))$, that could possibly intersect ``bad cubes" has measure given by
$$|Q(\ep(h)z,\ep(h))|-|Q(\ep(h)z,\ep(h)-\ep^2(h))|=\ep(h)^2-(\ep(h)-\ep(h)^2)^2=2\ep(h)^3-\ep(h)^4.$$
By \eqref{eq:count-zep} we conclude that the sum of all areas of ``bad cubes" intersecting $Q_{\ep}$ is bounded from above by
\be{eq:area-bad-cubes}C\frac{|{\omega}|}{\ep^2(h)}(2\ep^3(h)-\ep^4(h))\leq C\ep(h).\ee

\noindent We define the sets
\begin{align*}
\zeg:=\Big\{\lambda\in\Z^2: \exists z\in\ze\text{ s.t. }Q(\ep^2(h)\lambda,\ep^2(h))\subset Q(\ep(h)z,\ep(h))\Big\},
\end{align*}
and
$$\zeb:=\Big\{\lambda\in\Z^2:Q(\ep(h)^2\lambda,\ep^2(h))\cap Q_{\ep}\neq\emptyset\text{ and }\lambda\notin\zeg\Big\}$$
(where `g' and `b' stand for ``good'' and ``bad'', respectively).
We rewrite \eqref{eq:to-study} as 
\begin{align*}
&\int_{\omega}\frac{(R^h)'(x')}{h}:\np \varphi\Big(\frac{x'}{\ep^2(h)}\Big)\psi\Big(x',\frac{x'}{\ep(h)}\Big)\,dx'\\
&\quad=\szeg\int_{\qel}\frac{(R^h)'(x')}{h}:\np \varphi\Big(\frac{x'}{\ep^2(h)}\Big)\psi\Big(x',\frac{x'}{\ep(h)}\Big)\,dx'\\
&\qquad+\szeb\int_{\qel}\frac{(R^h)'(x')}{h}:\np \varphi\Big(\frac{x'}{\ep^2(h)}\Big)\psi\Big(x',\frac{x'}{\ep(h)}\Big)\,dx'.
\end{align*}

Since the maps $\{(R^h)'\}$ are piecewise constant on ``good cubes", by the periodicity of $\varphi$ we have
\begin{align}
\label{eq:decomposition}
&\int_{\omega}\frac{(R^h)'(x')}{h}:\np \varphi\Big(\frac{x'}{\ep^2(h)}\Big)\psi\Big(x',\frac{x'}{\ep(h)}\Big)\,dx'\\
\nonumber&\quad=\szeg\int_{\qel}\frac{(R^h)'(x')}{h}\\
\nonumber&\qquad:\np \varphi\Big(\frac{x'}{\ep^2(h)}\Big)\left(\psi\Big(x',\frac{x'}{\ep(h)}\Big)-\psi(\ep^2(h)\lambda,\ep(h)\lambda)\right)\,dx'\\
\nonumber&\qquad+\szeb\int_{\qel}\frac{(R^h)'(x')}{h}\\
\nonumber&\qquad:\np \varphi\Big(\frac{x'}{\ep^2(h)}\Big)\left(\psi\Big(x',\frac{x'}{\ep(h)}\Big)-\psi(\ep^2(h)\lambda,\ep(h)\lambda)\right)\,dx'\\
\nonumber&\qquad+\szeb\int_{\qel}\frac{(R^h)'(x')}{h}:\np \varphi\Big(\frac{x'}{\ep^2(h)}\Big)\psi(\ep^2(h)\lambda,\ep(h)\lambda)\,dx'.
\end{align}

We claim that
\be{eq:CLAIM}
\lim_{h\to 0}\Big|\szeb\int_{\qel}\frac{(R^h)'(x')}{h}:\np \varphi\Big(\frac{x'}{\ep^2(h)}\Big)\psi\Big(\ep^2(h)\lambda,\ep(h)\lambda\Big)\,dx'\Big|=0.
\ee
Indeed, by the periodicity of $\varphi$, $$\int_{\qel}\np \varphi\Big(\frac{x'}{\ep^2(h)}\Big)\,dx'=0\quad\text{for every }\lambda\in\Z^2,$$
and we have
\begin{align*}
&\Big|\szeb\int_{\qel}\frac{(R^h)'(x')}{h}:\np \varphi\Big(\frac{x'}{\ep^2(h)}\Big)\psi\Big(\ep^2(h)\lambda,\ep(h)\lambda\Big)\,dx'\Big|\\
&\quad=\Big|\szeb\int_{\qel}\frac{(R^h)'(x')-(R^h)'(\ep^2(h)\lambda)}{h}\\
&\qquad:\np \varphi\Big(\frac{x'}{\ep^2(h)}\Big)\psi\Big(\ep^2(h)\lambda,\ep(h)\lambda\Big)\,dx'\Big|.
\end{align*}
Therefore, by H\"older's inequality,
\begin{align}
\label{eq:sum-cubes}
&\Big|\szeb\int_{\qel}\frac{(R^h)'(x')}{h}:\np \varphi\Big(\frac{x'}{\ep^2(h)}\Big)\psi\Big(\ep^2(h)\lambda,\ep(h)\lambda\Big)\,dx'\Big|\\
\nonumber&\leq\frac{C}{h}\int_{\cup_{\lambda\in\zeb}\qel}|(R^h)'(x')-(R^h)'(\ep^2(h)\lambda)|\,dx'\\
\nonumber&\leq \frac{C}{h}\Big|\cup_{\lambda\in\zeb}\qel\Big|^{\tfrac12}\|(R^h)'(x')-(R^h)'(\ep^2(h)\lambda)\|_{L^2(\cup_{\lambda\in\zeb}{\qel})}.
\end{align}
Every cube $\qel$ in the previous sum intersects at most four elements of $Q_{\ep}$. For every $\lambda\in\zeb$, let $Q(\ep(h)z_i^{\lambda},\ep)$, $i=1,\cdots,4$, be such cubes, where
$$\#\{z_i^{\lambda}:i=1,\cdots,4\}\leq 4.$$
Without loss of generality, for every $\lambda\in\zeb$ we can assume that 
$$\ep^2(h)\lambda\in Q(\ep(h)z_4^{\lambda},\ep(h)),$$ so that
$$|(R^h)'(x')-(R^h)'(\ep^2(h)\lambda)|=0\quad\text{a.e. in }Q(\ep(h)z_4^{\lambda},\ep(h)).$$
Hence,
\begin{align*}
&\szeb\int_{\qel}|(R^h)'(x')-(R^h)'(\ep^2(h)\lambda)|^2\,dx'\\
&\quad=\szeb\sum_{i=1}^3\int_{\qel\cap Q(\ep(h)z_i^{\lambda},\ep(h))}|(R^h)'(x')-(R^h)'(\ep^2(h)\lambda)|^2\,dx'.
\end{align*}
Since the maps $\{R^h\}$ are piecewise constant on each set 
$$\qel\cap Q(\ep(h)z_i^{\lambda},\ep(h)),$$ there holds
$$|(R^h)'(x')-(R^h)'(\ep^2(h)\lambda)|=|(R^h)'(x')-(R^h)'(x'+\xi)|$$
for some $\xi\in\{\pm\ep^2(h)e_1,\pm\ep^2(h)e_2,\pm\ep^2(h)e_1\pm\ep^2(h)e_2\}$. 

Therefore, by \eqref{eq:last-safe-distance} and Lemma \ref{lemma:rotations1}, and since $\gamma_1\in (0,+\infty)$, we have
\begin{align}
\label{eq:bdd-osc-cubes}
&\szeb\int_{\qel}|(R^h)'(x')-(R^h)'(\ep^2(h)\lambda)|^2\,dx'\\
\nonumber&\quad\leq C\|\dist(\nh u^h; SO(3))\|_{L^2(\Omega)}^2.
\end{align}
Combining \eqref{eq:unif-bending-en}, \eqref{eq:area-bad-cubes}, \eqref{eq:sum-cubes}, and \eqref{eq:bdd-osc-cubes}, we finally get the inequality
\begin{align*}
&\Big|\szeb\int_{\qel}\frac{(R^h)'(x')}{h}:\np \varphi\Big(\frac{x'}{\ep^2(h)}\Big)\psi\Big(\ep^2(h)\lambda,\ep(h)\lambda\Big)\,dx'\Big|\\
&\quad\leq \frac{C}{h}\Big|\cup_{\lambda\in\zeb}\qel\Big|^{\tfrac12}\|\dist(\nh u^h; SO(3))\|_{L^2(\Omega)}\\
&\quad\leq C\Big|\cup_{\lambda\in\zeb}\qel\Big|^{\tfrac12}\leq C\sqrt{\ep(h)},
\end{align*}
and this concludes the proof of \eqref{eq:CLAIM}.

Estimates \eqref{eq:decomposition} and \eqref{eq:CLAIM} yield
\begin{align*}
&\lim_{h\to 0}\int_{\omega}\frac{(R^h)'(x')}{h}:\np \varphi\Big(\frac{x'}{\ep^2(h)}\Big)\psi\Big(x',\frac{x'}{\ep(h)}\Big)\,dx'\\
&\quad=\lim_{h\to 0}\sle\int_{\qel}\frac{(R^h)'(x')}{h}\\
\nonumber&\qquad:\np \varphi\Big(\frac{x'}{\ep^2(h)}\Big)\Big(\psi\Big(x',\frac{x'}{\ep(h)}\Big)-\psi(\ep^2(h)\lambda,\ep(h)\lambda)\Big)\,dx'\\
&\quad=\lim_{h\to 0}\sle\int_{\qel}\frac{(R^h)'(x')}{h}\\
\nonumber&\qquad:\np \varphi\Big(\frac{x'}{\ep^2(h)}\Big)\Big(\int_0^1\frac{d}{dt}\phi_{\ep}(\ep^2(h)\lambda+t(x'-\ep^2(h)\lambda))\,dt\Big)\,dx',
\end{align*}
where $\phi_{\ep}(x'):=\psi\Big(x',\frac{x'}{\ep(h)}\Big)$ for every $x'\in\omega$. Therefore, by the periodicity of $\varphi$
\begin{align}
\label{eq:long-estimate}
&\lim_{h\to 0}\int_{\omega}\frac{(R^h)'(x')}{h}:\np \varphi\Big(\frac{x'}{\ep^2(h)}\Big)\psi\Big(x',\frac{x'}{\ep(h)}\Big)\,dx'\\
\nonumber&\quad=\lim_{h\to 0}\Bigg[\sle \frac{\ep^2(h)}{h}\int_{\qel}(R^h)'(x'):\np \varphi\\
\nonumber&\qquad\Big(\frac{x'-\ep^2(h)\lambda}{\ep^2(h)}\Big)\Big(\int_0^1\nabla'\phi_{\ep}(\ep^2(h)\lambda+t(x'-\ep^2(h)\lambda))\cdot\frac{(x'-\ep^2(h)\lambda)}{\ep^2(h)}\,dt\Big)\,dx'\Bigg].
\end{align}
Changing coordinates in \eqref{eq:long-estimate} we get
\begin{align}
\label{eq:estimate-to-add}
&\lim_{h\to 0}\int_{\omega}\frac{(R^h)'(x')}{h}:\np \varphi\Big(\frac{x'}{\ep^2(h)}\Big)\psi\Big(x',\frac{x'}{\ep(h)}\Big)\,dx'\\
\nonumber&\quad=\lim_{h\to 0}\sle \frac{\ep^6(h)}{h}\int_{Q}(R^h)'(\ep^2(h)z+\ep^2(h)\lambda)\\
\nonumber&\qquad:\np \varphi(z)\Big(\int_0^1\nabla'\phi_{\ep}(\ep^2(h)\lambda+t\ep^2(h)z)\,dt\cdot z\Big)\,dz\\
\nonumber&\quad=\lim_{h\to 0}\left[\sle \frac{\ep^6(h)}{h}\int_{Q}(R^h)'(\ep^2(h)z+\ep^2(h)\lambda)\right.\\
\nonumber&\qquad:\np \varphi(z)\Big(\int_0^1(\nabla'\phi_{\ep}(\ep^2(h)\lambda+t\ep^2(h)z)-\nabla'\phi_{\ep}(\ep^2(h)\lambda))\,dt\cdot z\Big)\,dz\\
\nonumber&\qquad+\sle \frac{\ep^6(h)}{h}\int_{Q}(R^h)'(\ep^2(h)z+\ep^2(h)\lambda)\\
\nonumber&\qquad:\np \varphi(z)(\nabla'\phi_{\ep}(\ep^2(h)\lambda)\cdot z)\,dz\Bigg].
\end{align}

We notice that
\begin{align}
\label{eq:CLAIM2}
&\lim_{h\to 0}\Bigg[\sle \frac{\ep^6(h)}{h}\int_{Q}(R^h)'(\ep^2(h)z+\ep^2(h)\lambda)\\
\nonumber&\quad:\np \varphi(z)\Big(\int_0^1(\nabla'\phi_{\ep}(\ep^2(h)\lambda+t\ep^2(h)z)-\nabla'\phi_{\ep}(\ep^2(h)\lambda))\,dt\Big)\cdot z\,dz\Bigg]=0.
\end{align}
Indeed, since $\|(\nabla')^2\phi_{\ep}\|_{L^{\infty}(\omega\times Q;\mthree)}\leq \frac{C}{\ep^2(h)}$, we have
\begin{align*}
&\Bigg|\sle \frac{\ep^6(h)}{h}\int_{Q}(R^h)'(\ep^2(h)z+\ep^2(h)\lambda)\\
&\qquad:\np \varphi(z)\Big(\int_0^1(\nabla'\phi_{\ep}(\ep^2(h)\lambda+t\ep^2(h)z)-\nabla'\phi_{\ep}(\ep^2(h)\lambda))\,dt\cdot z\Big)\,dz\Bigg|\\
&\quad\leq C\frac{\ep^6(h)}{h}\sle\int_{Q}|(R^h)'(\ep^2(h)z+\ep^2(h)\lambda)|\|(\nabla')^2\phi_{\ep}\|_{L^{\infty}(\Omega\times Q)}|\ep^2(h)z|\,dz\\
&\quad\leq C\frac{\ep^6(h)}{h}\sle\int_{Q}|(R^h)'(\ep^2(h)z+\ep^2(h)\lambda)|\,dz\\
&\quad= C\frac{\ep^2(h)}{h}\sle\int_{\qel}|(R^h)'(x')|\,dx'\leq C\frac{\ep^2(h)}{h}\|(R^h)'\|_{L^1(\omega;\mthree)}
\end{align*}
which converges to zero by \eqref{eq:pwconst-conv} and because $\gamma_2=+\infty$. 

By \eqref{eq:CLAIM2}, estimate \eqref{eq:estimate-to-add} simplifies as
\begin{align}
\label{eq:intermediate1}
&\lim_{h\to 0}\int_{\omega}\frac{(R^h)'(x')}{h}:\np \varphi\Big(\frac{x'}{\ep^2(h)}\Big)\psi\Big(x',\frac{x'}{\ep(h)}\Big)\,dx'\\
\nonumber&\quad=\lim_{h\to 0}\sle \frac{\ep^6(h)}{h}\int_{Q}(R^h)'(\ep^2(h)z+\ep^2(h)\lambda)\\
\nonumber&\qquad:\np \varphi(z)(\nabla'\phi_{\ep}(\ep^2(h)\lambda)\cdot z)\,dz\\
\nonumber&\quad=\lim_{h\to 0}\Bigg[\sle \frac{\ep^6(h)}{h}\int_{Q}((R^h)'(\ep^2(h)z+\ep^2(h)\lambda)-(R^h)'(\ep^2(h)\lambda))\\
\nonumber&\qquad:\np \varphi(z)(\nabla'\phi_{\ep}(\ep^2(h)\lambda)\cdot z)\,dz\\
\nonumber&\quad+\sle \frac{\ep^6(h)}{h}\int_{Q}(R^h)'(\ep^2(h)\lambda):\np \varphi(z)(\nabla'\phi_{\ep}(\ep^2(h)\lambda)\cdot z)\,dz\Bigg].
\end{align}

We observe that
\begin{align}
\label{eq:CLAIM3}
&\lim_{h\to 0}\Bigg[\sle \frac{\ep^6(h)}{h}\int_{Q}((R^h)'(\ep^2(h)z+\ep^2(h)\lambda)-(R^h)'(\ep^2(h)\lambda))\\
\nonumber&\qquad:\np \varphi(z)(\nabla'\phi_{\ep}(\ep^2(h)\lambda)\cdot z)\,dz\Bigg]=0.
\end{align}
Indeed, since $\varphi\in C^{1}_{\rm per}(\R^2;\mthree)$ and $\|(\nabla')\phi_{\ep}\|_{L^{\infty}(\omega\times Q)}\leq \frac{C}{\ep(h)}$, recalling the definition of the sets $\zeb$ and $\zeg$, and applying H\"older's inequality, \eqref{eq:unif-bending-en}, \eqref{eq:area-bad-cubes}, and \eqref{eq:bdd-osc-cubes}, we obtain
\begin{align*}
&\Bigg|\sle \frac{\ep^6(h)}{h}\int_{Q}((R^h)'(\ep^2(h)z+\ep^2(h)\lambda)-(R^h)'(\ep^2(h)\lambda))\\
&\qquad:\np \varphi(z)(\nabla'\phi_{\ep}(\ep^2(h)\lambda)\cdot z)\,dz\Bigg|\\
&\quad\leq {C}\frac{\ep^5(h)}{h}\sle \int_{Q}|(R^h)'(\ep^2(h)z+\ep^2(h)\lambda)-(R^h)'(\ep^2(h)\lambda)|\,dz\\
&\quad=\frac{C\ep(h)}{h}\szeb\int_{\qel}|(R^h)'(x')-(R^h)'(\ep^2(h)\lambda)|\,dx'\\
&\quad\leq \frac{C\ep(h)}{h}\Big|\cup_{\lambda\in\zeb}\qel\Big|^{\tfrac12}\|\dist(\nh u^h;SO(3))\|_{L^2(\Omega)}\leq C\ep(h)^{\tfrac32}.
\end{align*}

Collecting \eqref{eq:intermediate1} and \eqref{eq:CLAIM3}, we deduce that
\begin{align}
\label{eq:intermediate2}
&\lim_{h\to 0}\int_{\omega}\frac{(R^h)'(x')}{h}:\np \varphi\Big(\frac{x'}{\ep^2(h)}\Big)\psi\Big(x',\frac{x'}{\ep(h)}\Big)\,dx'\\
\nonumber&\quad=\lim_{h\to 0}\Bigg[\sle \frac{\ep^6(h)}{h}\int_{Q}(R^h)'(\ep^2(h)\lambda):\np \varphi(z)(\nabla'\phi_{\ep}(\ep^2(h)\lambda)\cdot z)\,dz\Bigg].
\end{align}

Since $0<\gamma_1<+\infty$ and $\gamma_2=+\infty$, by \eqref{eq:pwconst-conv} we have
\begin{align*}
&\lim_{h\to 0}\sle \frac{\ep^6(h)}{h}\int_{Q}\fint_{\qel}(R^h)'(x'):\np \varphi(z)(\nabla'\phi_{\ep}(x')\cdot z)\,dx'\,dz\\
&\quad=\lim_{h\to 0}\frac{\ep^2(h)}{h}\int_{\omega}\iQ(R^h)'(x')\\
&\qquad:\np \varphi(z)\Big[\Big(\nabla_x\psi\Big(x',\frac{x'}{\ep(h)}\Big)+\frac{1}{\ep(h)}\nabla_y\psi\Big(x',\frac{x'}{\ep(h)}\Big)\Big)\cdot z\Big]\,dz\,dx'\\
&\quad=\frac{1}{\gamma_1}\intomyz{R'(x'):\np \varphi(z)(\nabla_y\psi(x',y)\cdot z)}=0,
\end{align*}
by the periodicity of $\psi$ with respect to $y$. We observe that if $\lambda\in\zeg$, then
\bmm
&\fint_{\qel}(R^h)'(x'):\np \varphi(z)(\nabla'\phi_{\ep}(x')\cdot z)\,dx'\\
&\quad=(R^h)'(\ep^2(h)\lambda):\fint_{\qel}\np \varphi(z)(\nabla'\phi_{\ep}(x')\cdot z)\,dx',
\end{align*}
and we obtain
\begin{align*}
&\lim_{h\to 0}\Bigg[\sle \frac{\ep^6(h)}{h}\int_{Q}(R^h)'(\ep^2(h)\lambda):\np \varphi(z)(\nabla'\phi_{\ep}(\ep^2(h)\lambda)\cdot z)\,dz\\
\nonumber&\qquad-\sle \frac{\ep^6(h)}{h}\int_{Q}\fint_{\qel}(R^h)'(x')\\
\nonumber&\qquad:\np \varphi(z)(\nabla'\phi_{\ep}(x')\cdot z)\,dx'\,dz\Bigg]\\
\nonumber&\quad=\lim_{h\to 0}\left[\szeg \frac{\ep^6(h)}{h}(R^h)'(\ep^2(h)\lambda)\right.\\
\nonumber&\qquad:\iQ\np \varphi(z)\Big[\Big(\nabla'\phi_{\ep}(\ep^2(h)\lambda)-\fint_{\qel}\nabla'\phi_{\ep}(x')\,dx'\Big)\cdot z\Big]\,dz\\
\nonumber&\qquad+\szeb \frac{\ep^6(h)}{h}\int_{Q}(R^h)'(\ep^2(h)\lambda):\np \varphi(z)(\nabla'\phi_{\ep}(\ep^2(h)\lambda)\cdot z)\,dz\\
\nonumber&\qquad-\szeb \frac{\ep^6(h)}{h}\int_{Q}\fint_{\qel}(R^h)'(x'):\np \varphi(z)(\nabla'\phi_{\ep}(x')\cdot z)\,dx'\,dz\Bigg].
\end{align*}
By the regularity of $\varphi$ and $\psi$, and the boundedness of $\{R^h\}$ in $L^{\infty}(\omega;\mthree)$,
\begin{align}
\label{eq:term1good}
&\Bigg|\szeg \frac{\ep^6(h)}{h}(R^h)'(\ep^2(h)\lambda)\\
\nonumber&\qquad:\iQ\np \varphi(z)\Bigg[\Big(\nabla'\phi_{\ep}(\ep^2(h)\lambda)-\fint_{\qel}\nabla'\phi_{\ep}(x')\,dx'\Big)\cdot z\Bigg]\,dz\Bigg|\\
\nonumber&\quad\leq C\frac{\ep^2(h)}{h}\szeg\int_{\qel}|\nabla'\phi_{\ep}(\ep^2(h)\lambda)-\nabla'\phi_{\ep}(x')|\,dx'\\
\nonumber&\quad\leq C\frac{\ep^4(h)}{h}\|\nabla^2\phi_{\ep}\|_{L^{\infty}(\omega\times Q;\mthree)}\leq C\frac{\ep^2(h)}{h},
\end{align}
which converges to zero, because $\gamma_2=+\infty$.
On the other hand,
\begin{align}
\label{eq:term1bad}
&\szeb \frac{\ep^6(h)}{h}\int_{Q}\Bigg[(R^h)'(\ep^2(h)\lambda):\np \varphi(z)(\nabla'\phi_{\ep}(\ep^2(h)\lambda)\cdot z)\,dz\\
\nonumber&\qquad-\fint_{\qel}(R^h)'(x'):\np \varphi(z)(\nabla'\phi_{\ep}(x')\cdot z)\,dx'\Bigg]\,dz\\
\nonumber&\quad=\szeb\frac{\ep^6(h)}{h}\int_{Q}(R^h)'(\ep^2(h)\lambda)\\
\nonumber&\qquad:\np \varphi(z)\Big[\Big(\nabla'\phi_{\ep}(\ep^2(h)\lambda)-\fint_{\qel}\nabla'\phi_{\ep}(x')\,dx'\Big)\cdot z\Big]\,dz\\
\nonumber&\quad+\szeb\frac{\ep^6(h)}{h}\int_{Q}\fint_{\qel}((R^h)'(\ep^2(h)\lambda)-(R^h)'(x'))\\
\nonumber&\qquad:\np \varphi(z)(\nabla'\phi_{\ep}(x')\cdot z)\,dx'\,dz.
\end{align}
Therefore, arguing as in \eqref{eq:term1good}, the first term on the right hand side of \eqref{eq:term1bad} is bounded by
$C\frac{\ep^2(h)}{h}$,
whereas by \eqref{eq:area-bad-cubes} and the boundedness of $\{R^h\}$ in $L^{\infty}(\omega;\mthree)$,
\begin{align}
\label{eq:bad2done}
&\Bigg|\szeb\frac{\ep^6(h)}{h}\int_{Q}\fint_{\qel}((R^h)'(\ep^2(h)\lambda)-(R^h)'(x'))\\
\nonumber&\qquad:\np \varphi(z)(\nabla'\phi_{\ep}(x')\cdot z)\,dx'\,dz\Bigg|\\
\nonumber&\quad\leq C\frac{\ep(h)}{h}\szeb\int_{\qel}|(R^h)'(x')-(R^h)'(\ep^2(h)\lambda)|\,dx'\\
\nonumber&\quad\leq C\frac{\ep^2(h)}{h},
\end{align}
which converges to zero as $\gamma_2=+\infty$. 

Combining \eqref{eq:intermediate2}--\eqref{eq:bad2done} we conclude that
\be{eq:rh-osc}\lim_{h\to 0}\int_{\omega}\frac{(R^h)'(x')}{h}:\np \varphi\Big(\frac{x'}{\ep^2(h)}\Big)\psi\Big(x',\frac{x'}{\ep(h)}\Big)\,dx'=0.\ee
By \eqref{eq:V-dr-3}, \eqref{eq:claim2-part1}, and \eqref{eq:rh-osc}, we obtain
$$\intomyz{\Big(V(x',y,z)-\int_Z V(x',y,\xi)\,d\xi\Big):\np \varphi(z)\psi(x',y)}=0,$$
for all $\varphi\in C^1_{\rm per}(Q;\R^3)$ and $\psi\in C^{\infty}_c(\omega;C^{\infty}_{\rm per}(Q))$. 

This completes the proof of \eqref{eq:claim-V2}.\\

\noindent\emph{Case 1, Step 2: Characterization of the limit linearized strain $G$.}\\
In order to identify the multiscale limit of the sequence of linearized strains $G^h$, by \eqref{eq:E-G}, \eqref{eq:remaining-E-gamma_1}, \eqref{eq:scaled-grad-dec}--\eqref{eq:V-dr-3} we now characterize the weak 3-scale limits of the sequences $\{x_3\nabla'\tilde{R}^he_3\}$ and $\{\frac{1}{h}(\tilde{R}^he_3-R^he_3)\}$. 

By \eqref{eq:sobolev-conv} and \cite[Theorem 1.2]{allaire.briane} there exist $S\in L^2(\omega;W^{1,2}_{\rm per}(Q;\mthree))$ and $T\in L^2(\omega\times Q; W^{1,2}_{\rm per}(Q;\mthree))$ such that
\be{eq:weak-gradRh-3scales}\nabla'\tilde{R}^h\swks \nabla'R+\nabla_yS+\nabla_zT\quad\text{weakly 3-scale},\ee
where $\iQ S(x',y)\,dy=0$ for a.e. $x'\in\omega$, and $\iQ T(x',y,z)\,dz=0$ for a.e. $x'\in\omega$, and $y\in Y$.
By \eqref{eq:unif-bending-en} and \eqref{eq:approximate-rot}, there exists $w\in L^2(\omega\times Q\times Q;\R^3)$ such that
$$\frac{1}{h}(\tilde{R}^he_3-R^he_3)\swks w\quad\text{weakly 3-scale}$$
and hence,
$$\frac{1}{h}(\tilde{R}^he_3-R^he_3)\wk w_0\quad\text{weakly in }L^2(\omega;\R^3)$$ 
where
$$w_0(x'):=\int_Q\int_Q w(x',y,z)\,dy\,dz,$$
for a.e. $x'\in\omega$. 
We claim that
\be{eq:FINALCLAIM}
\frac{1}{h}(\tilde{R}^he_3-R^he_3)\swks w_0(x')+\frac{1}{\gamma_1}S(x',y)e_3+\frac{(y\cdot\nabla')R(x')e_3}{\gamma_1},
\ee
weakly 3-scale.
We first remark that the same argument as in the proof of \eqref{eq:to-study} yields
$$\frac{R^he_3}{h}\oscz 0.$$
Moreover, since $\gamma_1\in(0,+\infty)$, by \eqref{eq:pwconst-conv}, Lemmas \ref{lemma:pw-osc} and \ref{lemma:sobolev-osc}, there holds
$$\frac{R^he_3}{h}\oscy-\frac{(y\cdot\nabla')Re_3}{\gamma_1}$$
and
$$\frac{\tilde{R}^he_3}{h}\oscy\frac{Se_3}{\gamma_1},$$
where in the latter property we used the fact that $\iQ\nabla_zT(x',y,z)\,dz=0$ for a.e. $x'\in\omega$ and $y\in Y$ by periodicity, and $\iQ S(x',y)\,dy=0$ for a.e. $x'\in\omega$.
Therefore, by Remark \ref{remark:osc}, to prove \eqref{eq:FINALCLAIM} we only need to show that
\be{eq:FINALCLAIM2}
\frac{\tilde{R}^he_3}{h}\oscz 0.
\ee

 To this purpose, fix $\varphi\in C^{\infty}_{\rm per}(Q)$, with $\int_Q \varphi(z)\,dz=0$, and $\psi\in C^{\infty}_c(\Omega; C^{\infty}_{\rm per}(Q))$, and let $g\in C^2(Q)$ be the unique periodic solution to
$$\begin{cases}
\Delta g(z)=\varphi(z)&\\
\int_Q g(z)\,dz=0.
\end{cases}$$
Set 
\be{eq:def-ge}g^{\ep}(x'):=\ep^2(h)g\Big(\frac{x'}{\ep^2(h)}\Big)\quad\text{for every }x'\in\omega,\ee so that
\be{eq:lap-ge}\Delta g^{\ep}(x')=\frac{1}{\ep^2(h)}\varphi\Big(\frac{x'}{\ep^2(h)}\Big)\quad\text{for every }x'\in\omega.\ee
By \eqref{eq:def-ge} and \eqref{eq:lap-ge}, and for $i\in\{1,2,3\}$, we obtain
\begin{align*}
&\int_{\omega}\frac{\tilde{R}^h_{i3}(x')}{h}\varphi\Big(\frac{x'}{\ep^2(h)}\Big)\psi\Big(x',\frac{x'}{\ep(h)}\Big)\,dx'\\
&\quad=\frac{\ep^2(h)}{h}\int_{\omega}\tilde{R}^h_{i3}(x')\Delta g^{\ep}(x')\psi\Big(x',\frac{x'}{\ep(h)}\Big)\,dx'.
\end{align*}
Integrating by parts, we have
\begin{align}
\label{eq:sum-many-terms}
&\int_{\omega}\frac{\tilde{R}^h_{i3}(x')}{h}\varphi\Big(\frac{x'}{\ep^2(h)}\Big)\psi\Big(x',\frac{x'}{\ep(h)}\Big)\,dx'\\
\nonumber&\quad=-\frac{\ep^2(h)}{h}\int_{\omega}\nabla'\tilde{R}^h_{i3}(x')\cdot \nabla' \Big(g^{\ep}(x')\psi\Big(x',\frac{x'}{\ep(h)}\Big)\Big)\,dx'\\
\nonumber&\qquad-\frac{\ep^2(h)}{h}\int_{\omega}\tilde{R}^h_{i3}(x')\Big(2\nabla' g^{\ep}(x')\cdot(\nabla_{x'}\psi)\Big(x',\frac{x'}{\ep(h)}\Big)\\
\nonumber&\qquad+g^{\ep}(x')(\Delta_{x'}\psi)\Big(x',\frac{x'}{\ep(h)}\Big)\Big)\,dx'\\
\nonumber&\qquad-\frac{\ep(h)}{h}\int_{\omega}\tilde{R}^h_{i3}(x')\Big[2\nabla' g^{\ep}(x')\cdot\nabla_y\psi\Big(x',\frac{x'}{\ep(h)}\Big)\\
\nonumber&\qquad+2g^{\ep}(x')({\rm div}_y\,\nabla_{x'}\psi)\Big(x',\frac{x'}{\ep(h)}\Big)\Big]\,dx'\\
\nonumber&\qquad-\frac{1}{h\ep(h)}\int_{\omega}\tilde{R}^h_{i3}(x') g^{\ep}(x')\Delta_y\psi\Big(x',\frac{x'}{\ep(h)}\Big)\,dx'.
\end{align}
Since 
$\nabla' \Big(g^{\ep}(\cdot)\psi\Big(\cdot,\frac{\cdot}{\ep(h)}\Big)\Big)\in L^{\infty}(\omega;\R^2)$,
\be{eq:term1-finalclaim}
\lim_{h\to 0}\frac{\ep^2(h)}{h}\int_{\omega}\nabla'\tilde{R}^h_{i3}(x')\cdot\nabla' \Big(g^{\ep}(x')\psi\Big(x',\frac{x'}{\ep(h)}\Big)\Big)\,dx'=0,
\ee
where we used the fact that $\gamma_2=+\infty$, and similarly,
\be{eq:term2-finalclaim}
\lim_{h\to 0}\frac{\ep^2(h)}{h}\int_{\omega}\tilde{R}^h_{i3}(x')\Big(2\nabla' g^{\ep}(x')\cdot(\nabla_{x'}\psi)\Big(x',\frac{x'}{\ep(h)}\Big)+g^{\ep}(x')(\Delta_{x'}\psi)\Big(x',\frac{x'}{\ep(h)}\Big)\Big)\,dx'=0.
\ee
Regarding the third term in the right-hand side of \eqref{eq:sum-many-terms}, we write
\begin{align}
\label{eq:term3-finalclaim}
&\frac{\ep(h)}{h}\int_{\omega}\tilde{R}^h_{i3}(x')\Big[2\nabla' g^{\ep}(x')\cdot\nabla_y\psi\Big(x',\frac{x'}{\ep(h)}\Big)+2g^{\ep}(x')({\rm div}_y\,\nabla_{x'}\psi)\Big(x',\frac{x'}{\ep(h)}\Big)\Big]\,dx'\\
\nonumber&\quad=2\frac{\ep(h)}{h}\int_{\omega}\tilde{R}^h_{i3}(x')\nabla' g\Big(\frac{x'}{\ep^2(h)}\Big)\cdot\nabla_y\psi\Big(x',\frac{x'}{\ep(h)}\Big)\,dx'\\
\nonumber&\qquad+\frac{2\ep^3(h)}{h}\int_{\omega}\tilde{R}^h_{i3}(x') g\Big(\frac{x'}{\ep^2(h)}\Big)({\rm div}_y\,\nabla_{x'}\psi)\Big(x',\frac{x'}{\ep(h)}\Big)\,dx'.
\end{align}
By the regularity of $g$ and $\psi$,
$$\nabla' g\Big(\frac{x'}{\ep^2(h)}\Big)\cdot\nabla_y\psi\Big(x',\frac{x'}{\ep(h)}\Big)\sstts \nabla g(z)\nabla_y\psi(x',y)\quad\text{strongly 3-scale}.$$
Therefore, by \eqref{eq:sobolev-conv}, and since $0<\gamma_1<+\infty$ and $\gamma_2=+\infty$, we obtain
\begin{align}
\label{eq:term3-finalclaim3}
&\lim_{h\to 0}\left[\frac{\ep(h)}{h}\int_{\omega}\tilde{R}^h_{i3}(x')\Big[2\nabla' g^{\ep}(x')\cdot\nabla_y\psi\Big(x',\frac{x'}{\ep(h)}\Big)\right.\\
\nonumber&\qquad\left.+2g^{\ep}(x')({\rm div}_y\,\nabla_{x'}\psi)\Big(x',\frac{x'}{\ep(h)}\Big)\Big]\,dx'\right]\\
\nonumber&\quad=\frac{2}{\gamma_1}\intomyz{{R}_{i 3}(x')\nabla g(z)\cdot\nabla_y\psi(x',y)}=0,
\end{align}
where the last equality is due to the periodicity of $\psi$ in the $y$ variable. 

Again by the regularity of $g$ and $\psi$,
$$g\Big(\frac{x'}{\ep^2(h)}\Big)\Delta_y\psi\Big(x',\frac{x'}{\ep(h)}\Big)\sstts g(z)\Delta_y\psi(x',y)\quad\text{strongly 3-scale},$$
hence, by \eqref{eq:sobolev-conv}, and since $0<\gamma_1<+\infty$ and $\psi\in C^{\infty}_c(\omega; C^{\infty}_{\rm per}(Q))$, the fourth term in the right-hand side of \eqref{eq:sum-many-terms} satisfies
\begin{align}
\label{eq:term4-finalclaim}
&\lim_{h\to 0}\frac{1}{h\ep(h)}\int_{\omega}\tilde{R}^h_{i3}(x') g^{\ep}(x')\Delta_y\psi\Big(x',\frac{x'}{\ep(h)}\Big)\Big)\,dx'\\
\nonumber&\quad=\frac{1}{\gamma_1}\intomyz {R_{i 3}(x')g(z)\Delta_y\psi(x',y)}=0.
\end{align}
Claim \eqref{eq:FINALCLAIM2}, and thus \eqref{eq:FINALCLAIM}, follow now by combining \eqref{eq:sum-many-terms} with \eqref{eq:term1-finalclaim}--\eqref{eq:term4-finalclaim}.\\

\noindent\emph{Case 1, Step 3: Characterization of E.}\\
By \eqref{eq:pwconst-conv}, and by collecting \eqref{eq:scaled-grad-dec}, \eqref{eq:rh-lim}, \eqref{eq:V-dr-3}, \eqref{eq:weak-gradRh-3scales}, and \eqref{eq:FINALCLAIM}, we deduce the characterization
\begin{align*}
R(x')G(x,y,z)&=\Big(\nabla'r(x')+\nabla_y\hat{\phi}_1(x,y)+\nabla_z\hat{\phi}_2(x,y,z)\Big|\frac{1}{\gamma_1}\partial_{x_3}\hat{\phi}_1(x,y)\Big)\\
\nonumber&\quad+\left(V(x',y,z)\Big|w_0(x')+\frac{1}{\gamma_1}S(x',y)e_3+\frac{(y\cdot \nabla')R'(x')}{\gamma_1}e_3\right)\\
\nonumber&\quad+x_3\Big(\nabla'R(x')e_3+\nabla_yS(x',y)e_3+\nabla_zT(x',y,z)e_3|0\Big)
\end{align*}
for a.e. $x\in\Omega$ and $y,z\in Q$, where $r\in W^{1,2}(\omega;\R^3)$, $\hat{\phi}_1\in L^2(\omega;W^{1,2}((-\tfrac12,\tfrac12);\newline
W^{1,2}_{\rm per}(Q;\R^3)))$, $w_0\in L^2(\omega;\R^3)$, $S\in L^2(\omega;W^{1,2}_{\rm per}(Q;\mthree))$, $V\in L^2(\omega\times Q\times Q;\M^{3\times 2})$, $\hat{\phi}_2\in L^2(\Omega\times Q; W^{1,2}_{\rm per}(Q;\R^3))$, and $T\in L^2(\omega\times Q;W^{1,2}_{\rm per}(Q;\mthree))$. Therefore, by \eqref{eq:claim-V2}
\begin{align*}
&{\rm sym}\,G(x,y,z)-\iQ{\rm sym}\,G(x,y,\xi)\,d\xi\\
\nonumber&\quad={\rm sym}\,\Bigg[R(x')^T\Big(V(x',y,z)-\iQ V(x',y,z)+\nabla_z\hat{\phi}_2(x,y,z)\Big|0\Big)\\
\nonumber&\qquad+x_3R(x')^T\Big(\nabla_zT(x',y,z)e_3\Big|0\Big)\Bigg]\\
\nonumber&\quad={\rm sym}\,\Bigg[R(x')^T\Big(\nabla_zv(x',y,z)+\nabla_z\hat{\phi}_2(x,y,z)+x_3\nabla_z T(x',y,z)e_3\Big|0\Big)\Bigg],
\end{align*}
where $Te_3, \tilde{\tilde{v}}\in L^2(\omega\times Q; W^{1,2}_{\rm per}(Q;\R^3))$.
The thesis follows now by \eqref{eq:E-G}, \eqref{eq:first-part-E-g1}, and by setting
$${\phi}_2:=R^T(v+\hat{\phi}_2+x_3Te_3).$$
for a.e. $x\in\Omega$, and $y,z\in Q$.\\

\medskip

\noindent\textbf{Case 2: $\gamma_1=+\infty$ and $\gamma_2=+\infty$}.\\
The proof is very similar to the first case where $0<\gamma_1<+\infty$. We only outline the main modifications.

Arguing as in \cite[Proof of Proposition 3.2]{hornung.neukamm.velcic}, in order to construct the sequence $\{R^h\}$, we apply Lemma \ref{lemma:rotations1} with
$$\delta(h):=\Big(2\ceil[\Big]{\frac{h}{\ep(h)}}+1\Big)\ep(h).$$
This way, 
$$\lim_{h\to 0}\frac{h}{\delta(h)}= \frac12,$$
and the maps $R^h$ are piecewise constant on cubes of the form $Q(\delta(h)z,\delta(h))$, with $z\in\Z^2$. In particular, since $\Big\{\frac{\delta(h)}{\ep(h)}\Big\}$ is a sequence of odd integers, by Lemma \ref{lemma:integers-appendix} the maps $R^h$ are piecewise constant on cubes of the form $Q(\ep(h)z,\ep(h))$ with $z\in\Z^2$, and \eqref{eq:approximate-rot} holds true. Defining $\{r^h\}$ as in \eqref{eq:def-rh}, we obtain equality \eqref{eq:scaled-grad-dec}. By Theorem \ref{thm:mod3scales-limits}(i), there exist $r\in W^{1,2}(\omega;\R^3)$, $\hat{\phi}_1\in L^2(\Omega;W^{1,2}_{\rm per}(Q;\R^3))$, $\hat{\phi}_2\in L^2(\Omega\times Q;W^{1,2}_{\rm per}(Q;\R^3))$, and \mbox{$\bar{\phi}\in L^2\big(\omega;W^{1,2}\big(\big(-\tfrac12,\tfrac12\big);\R^3\big)\big)$} such that
\be{eq:rhinfty}\nh r^h\wks (\nabla' r+\nabla_y \hat{\phi}_1+\nabla_z\hat{\phi}_2|\partial_{x_3}\bar{\phi})\quad\text{weakly dr-3-scale}.\ee
Moreover, \eqref{eq:first-part-E-g1} now becomes
\begin{align*}
&{\rm sym}\,\iQ G(x,y,\xi)\,d\xi\\
\nonumber&\quad=\Big(\begin{array}{cc}x_3\Pi^u(x')+{\rm sym }\,B(x')&0\\
0&0\end{array}\Big)+{\rm sym }\Big(\nabla_y \phi_1(x,y)\Big|\partial_{x_3}\bar{\phi}\Big)
\end{align*}
for a.e. $x\in\Omega$ and $y\in Y$, where $B\in L^2(\omega;\M^{2\times 2})$. Arguing as in Step 1--Step 3 of Case 1, we obtain the characterization
\begin{align*}
E(x,y,z)&=\Bigg(\begin{array}{ccc}x_3\Pi^u(x')+{\rm sym\,}B(x')&0\\
0&0\end{array}\Bigg)\\
&\quad+{\rm sym } (\nabla_y {\phi}_1(x,y)|d(x))+{\rm sym }(\nabla_z{\phi}_2(x,y,z)|0),
\end{align*}
with $d:=\partial_{x_3}\bar{\phi}\in L^2(\Omega;\R^3)$, $\phi_1\in L^2(\Omega;W^{1,2}_{\rm per}(Q;\R^3))$, and $\phi_2\in L^2(\Omega\times Q;\newline W^{1,2}_{\rm per}(Q;\R^3))$.\\

\noindent\textbf{Case 3: $\gamma_1=0${ and }$\gamma_2=+\infty$}.\\
The structure of the proof is similar to that of Cases 1 and 2, therefore we only outline the main steps and key points, leaving the details to the reader.

We first apply Lemma \ref{lemma:rotations1} with 
$$\delta(h):=\Big(2\ceil[\Big]{\frac{h}{\ep^2(h)}}+1\Big)\ep^2(h),$$
 and by Lemma \ref{lemma:integers-appendix} we construct 
 $$\{R^h\}\subset L^{\infty}(\omega; SO(3))\quad\text{and}\quad\{\tilde{R}^h\}\subset W^{1,2}(\omega;\mthree),$$ satisfying \eqref{eq:approximate-rot}, and with $R^h$ piecewise constant on every cube of the form 
 $$Q(\ep^2(h)z,\ep^2(h)),\quad\text{with }z\in\Z^2.$$
 
  Arguing as in Case 1, we obtain the convergence properties in \eqref{eq:pwconst-conv} and \eqref{eq:sobolev-conv}, and the identification of $E$ reduces to establishing a characterization of the weak 3-scale limit $G$ of the sequence $\{G^h\}$ defined in \eqref{eq:Gh}. In view of \cite[Proposition 3.2]{velcic}, there exist $B\in L^2(\omega;\M^{2\times 2})$, $\xi\in L^2(\Omega; W^{1,2}_{\rm per}(Q;\R^2))$, $\eta\in L^2(\omega;W^{2,2}_{\rm per}(Q;\R^2))$, and $g_i\in L^2(\Omega\times Y)$, $i=1,2,3,$ such that
\bm{eq:first-part-E-bis}
\iQ E(x,y,z)\,dz&={\rm sym}\,\iQ G(x,y,z)\,dz\\
\nonumber&=\Big(\begin{array}{cc}x_3\Pi^u(x')+{\rm sym }\,B(x')&0\\
0&0\end{array}\Big)\\
\nonumber&\qquad+\Bigg(\begin{array}{cc}{\rm sym }\nabla_y\xi(x,y)+x_3\nabla_y^2\eta(x',y)&g_1(x,y)\\
\phantom{g}&g_2(x,y)\\
g_1(x,y)\quad g_2(x,y)&g_3(x,y)\end{array}\Bigg)
\end{align}
for a.e. $x\in\Omega$ and $y\in Y$. We consider the maps $\{\bar{u}^h\}$ and $\{r^h\}$ defined in \eqref{eq:def-uh} and \eqref{eq:def-rh}, and we perform the decomposition
in \eqref{eq:scaled-grad-dec}. By Theorem \ref{thm:mod3scales-limits} (iii) there exist maps $r\in W^{1,2}(\omega;\R^3)$, $\hat{\phi}_1\in L^2(\omega;W^{1,2}_{\rm per}(Q;\R^3))$, $\hat{\phi}_2\in L^2(\Omega\times Q;W^{1,2}_{\rm per}(Q;\R^3))$, and $\bar{\phi}\in L^2\big(\omega\times Q; W^{1,2}\big(\big(-\tfrac12,\tfrac12\big);\R^3\big)\big)$ such that
$$\nh r^h\wks (\nabla' r+\nabla_y \hat{\phi}_1+\nabla_z\hat{\phi}_2|\partial_{x_3}\bar{\phi})\quad\text{weakly dr-3-scale}.$$
Defining $V$ as in \eqref{eq:V-dr-3}, we first need to show that
\be{eq:claimV-gamma2-bis}V(x',y,z)-\iQ V(x',y,z)\,dz=\nabla_zv(x',y,z)\ee
for a.e. $x'\in\omega$, and $y,z\in Q$, for some $v\in L^2(\omega\times Q;W^{1,2}_{\rm per}(Q;\R^3))$. 

As in Case 1--Step 1, by \cite[Lemma 3.7]{allaire.briane} and by a density argument, to prove \eqref{eq:claimV-gamma2-bis} it is enough to show that
\be{eq:ort-V-gamma2-bis}
\intomyz{\big(V(x',y,z)-\iQ V(x',y,z)\,dz\big):\np\varphi(z)\phi(y)\psi(x')}=0\ee
for every $\varphi\in C^{\infty}_{\rm per}(Q;\R^3)$, $\phi\in C^{\infty}_{\rm per}(Q)$ and $\psi\in C^{\infty}_c(\omega)$. 

Fix $\varphi\in C^{\infty}_{\rm per}(Q;\R^3)$, $\phi\in C^{\infty}_{\rm per}(Q)$, $\psi\in C^{\infty}_c(\omega)$, and set 
$$\varphi^{\ep}(x'):=\ep^2(h)\phi\Big(\frac{x'}{\ep(h)}\Big)\varphi\Big(\frac{x'}{\ep^2(h)}\Big)\quad\text{for every }x'\in\R^2.$$
Integrating by parts and applying Riemann-Lebesgue lemma (see \cite{fonseca.leoni}) we deduce 
\bm{eq:case3-additional1-bis}
&\lim_{h\to 0}\int_{\omega}{\frac{\nh \bar{u}^h-(R^h)'}{h}:\np \varphi^{\ep}(x')\psi(x')}\\
\nonumber&\quad=\intomyz{V(x',y,z):\npp\varphi(z)\phi(y)\psi(x')}.
\end{align}
In view of \eqref{eq:case3-additional1-bis}, \eqref{eq:ort-V-gamma2-bis} reduces to showing that
\be{eq:claim-gamma2-bis}\lim_{h\to 0}\int_{\omega}{\frac{(R^h)'(x')}{h}:\np \varphi^{\ep}(x')\psi(x')\,dx'}=0.\ee
The key idea to prove \eqref{eq:claim-gamma2-bis} is to work on cubes $Q(\ep^2(h)z,\ep^2(h))$, with $z\in\Z^2$. Exploiting the periodicity of $\varphi$ and the fact that $\{R^h\}$ is piecewise constant on such cubes, we add and subtract the values of $\phi$ and $\psi$ in $\ep^2(h)z$, and use the smoothness of the maps to control their oscillations on each cube $Q(\ep^2(h)z,\ep^2(h))$, for $z\in\Z^2$.
Defining
$$\hat{\Z}^{\ep}:=\{z\in\Z^2:\,Q(\ep^2(h)z,\ep^2(h))\cap\, {\rm supp }\,\psi\neq\emptyset\},$$
a crucial point is to prove {the equivalent of \eqref{eq:intermediate2}, that is to show that} 
\bm{eq:C2-goes-zero-bis}
&\lim_{h\to 0}\frac{\ep^5(h)}{h}\slet\fint_{\qel}\iQ\Big\{(R^h)'(x')\\
\nonumber&\qquad:\np\varphi(z)\Big[\nabla'\phi\Big(\frac{x'}{\ep(h)}\Big)\cdot z\Big]\psi(x')\Big\}\,dz\,dx'=0.
\end{align}
This is achieved by adding and subtracting in \eqref{eq:C2-goes-zero-bis} the function $\frac{\tilde{R}^h}{h}$, i.e.,
\bmm
&\frac{\ep^5(h)}{h}\slet\fint_{\qel}\iQ(R^h)'(x'):\np\varphi(z)\Big[\nabla'\phi\Big(\frac{x'}{\ep(h)}\Big)\cdot z\Big]\psi(x')\,dz\,dx'\\
\nonumber&\quad=\frac{\ep^5(h)}{h}\slet\iQ\fint_{\qel}\Big\{\big((R^h)'(x')-(\tilde{R}^h)'(x')\big)\\
\nonumber&\qquad:\np\varphi(z)\Big[\nabla'\phi\Big(\frac{x'}{\ep(h)}\Big)\cdot z\Big]\psi(x')\Big\}\,dz\,dx'\\
\nonumber&\qquad+\frac{\ep^5(h)}{h}\slet\fint_{\qel}\iQ(\tilde{R}^h)'(x'):\np\varphi(z)\Big[\nabla'\phi\Big(\frac{x'}{\ep(h)}\Big)\cdot z\Big]\psi(x')\,dz\,dx'.
\end{align*}

By \eqref{eq:unif-bending-en}, \eqref{eq:approximate-rot} and by the regularity of the test functions $\phi, \varphi$, and $\psi$, we have
\bm{eq:464star}
&\Bigg|\frac{\ep^5(h)}{h}\slet\fint_{\qel}\iQ\Big\{\big((R^h)'(x')-(\tilde{R}^h)'(x')\big)\\
\nonumber&\qquad:\np\varphi(z)\Big[\nabla'\phi\Big(\frac{x'}{\ep(h)}\Big)\cdot z\Big]\psi(x')\,dz\Big\}\,dx'\Bigg|\\
\nonumber&\qquad\leq C\ep(h) \Big\|\frac{(R^h)'-(\tilde{R}^h)'}{h}\Big\|_{L^2(\omega;\M^{3\times 2})}\leq C\ep(h).
\end{align}
Finally, by \eqref{eq:sobolev-conv} and \cite[Theorem 1.2]{allaire.briane}, there exist $S\in L^2(\omega;W^{1,2}_{\rm per}(Q;\mthree))$, and $T\in L^2(\omega\times Q;W^{1,2}_{\rm per}(Q;\mthree))$ such that
\be{eq:sob-grad-gamma2-bis}
\nabla'\tilde{R}^h\swks\nabla'R+\nabla_y S+\nabla_z T\quad\text{weakly 3-scale},
\ee
where $\iQ S(x',y)\,dy=0$ for a.e. $x'\in\omega$, and $\iQ T(x',y,z)\,dz=0$ for a.e. $x'\in\omega$ and $y\in Q$.
By Lemma \ref{lemma:sobolev-osc}, 
$$\frac{\tilde{R}^h}{\ep(h)}\oscy S,$$
and hence
\be{eq:star-star-gamma2-bis}\lim_{h\to 0}\int_{\omega}\frac{(\tilde{R}^h)'(x')}{\ep(h)}\nabla'\phi\Big(\frac{x'}{\ep(h)}\Big)\psi(x')\,dx'
=\int_{\omega}\int_Q S'(x',y)\nabla'\phi(y)\psi(x')\,dx'\,dy.\ee
Since $\gamma_2=+\infty$, \eqref{eq:star-star-gamma2-bis} yields
\bmm
&\lim_{h\to 0}\frac{\ep^5(h)}{h}\slet\fint_{\qel}\iQ\Big\{(\tilde{R}^h)'(x')\\
\nonumber&\qquad:\np\varphi(z)\Big[\nabla'\phi\Big(\frac{x'}{\ep(h)}\Big)\cdot z\Big]\psi(x')\Big\}\,dz\,dx'\\
&\quad=\lim_{h\to 0}\frac{\ep^2(h)}{h}\int_{\omega}\iQ\Big\{\frac{(\tilde{R}^h)'(x')}{\ep(h)}:\np\varphi(z)\Big[\nabla'\phi\Big(\frac{x'}{\ep(h)}\Big)\cdot z\Big]\psi(x')\Big\}\,dz\,dx\\
&\quad=\frac{1}{\gamma_2}\iOQ S'(x',y):\np\varphi(z)[\nabla'\phi(y)\cdot z]\psi(x')\,dz\,dx'=0
\end{align*}
which, together with \eqref{eq:464star}, implies \eqref{eq:C2-goes-zero-bis}.

Once the proof of \eqref{eq:claimV-gamma2-bis} is completed, to identify $E$ we need to characterize the weak 3-scale limit of the scaled linearized strains $G^h$ (see \eqref{eq:Gh}, \eqref{eq:gh-3s} and \eqref{eq:E-G}). By \eqref{eq:scaled-grad-dec} this reduces to study the weak 3-scale limit of the sequence 
$$\Big\{\frac{R^he_3-\tilde{R}^he_3}{h}\Big\}.$$ By \eqref{eq:unif-bending-en} and \eqref{eq:approximate-rot}, there exists $w\in L^2(\omega\times Q\times Q;\R^3)$ such that
$$\frac{(\tilde{R}^h-R^h)}{h}\swks w(x',y,z)\quad\text{weakly 3-scale}.$$
We claim that
\be{eq:claim-osc-gamma2-bis}w(x',y,z)-\iQ w(x',y,z)\,dz=0\ee
for a.e. $x'\in\omega$, and $y,z\in Q$. To prove \eqref{eq:claim-osc-gamma2-bis}, by Remark \ref{remark:osc}, we have to show that
$$\frac{\tilde{R}^he_3-R^he_3}{h}\oscz 0.$$
A direct application of the argument in the proof of \eqref{eq:claim-gamma2-bis} yields
$$\frac{{R}^he_3}{h}\oscz 0,$$
therefore \eqref{eq:claim-osc-gamma2-bis} is equivalent to proving that
$$\frac{\tilde{R}^he_3}{h}\oscz 0$$
which follows arguing similarly to Case 1-Step 2, proof of \eqref{eq:FINALCLAIM2}.

Finally, with an argument similar to that of Case 1, Step 3, and combining  \eqref{eq:claimV-gamma2-bis} with \eqref{eq:sob-grad-gamma2-bis}, and \eqref{eq:claim-osc-gamma2-bis}, we obtain
\bmm
&R(x')G(x,y,z)-\iQ R(x')G(x,y,z)\,dz\\
&\quad=\big(\nabla_zv(x',y,z)+\nabla_z\hat{\phi}_2(x,y,z)+x_3\nabla_z T(x',y,z)e_3|0\big)
\end{align*}
for a.e. $x\in\Omega$, and $y,z\in Q$, where $v, Te_3\in L^2(\omega\times Q;W^{1,2}_{\rm per}(Q;\R^3))$, and $\hat{\phi}_2\in L^2(\Omega\times Q;W^{1,2}_{\rm per}(Q;\R^3))$.

By \eqref{eq:E-G},
$$E(x,y,z)-\iQ E(x,y,z)\,dz={\rm sym }\,(\nabla_z\phi(x,y,z)|0)$$
for a.e. $x\in\Omega,\text{ and }y,z\in Q$,
where $\phi:=R^T(v+\hat{\phi}_2+x_3Te_3)$. In view of \eqref{eq:first-part-E-bis} we conclude that
\bmm
&E(x,y,z)=\Big(\begin{array}{cc}x_3\Pi^u(x')+{\rm sym }\,B(x')&0\\
0&0\end{array}\Big)\\
&\quad+\Bigg(\begin{array}{cc}{\rm sym }\,\nabla_y\xi(x,y)+x_3\nabla_y^2\eta(x',y)&g_1(x,y)\\
\phantom{g}&g_2(x,y)\\
g_1(x,y)\quad g_2(x,y)&g_3(x,y)\end{array}\Bigg)+{\rm sym }\,(\nabla_z\phi(x,y,z)|0)
\end{align*}
for a.e. $x\in\Omega$, and $y,z\in Q$, where $B\in L^2(\omega;\M^{2\times 2})$, $\xi\in L^2(\Omega; W^{1,2}_{\rm per}(Q;\R^2))$, $\eta\in L^2(\omega;W^{2,2}_{\rm per}(Q))$, $g_i\in L^2(\Omega\times Y)$, $i=1,2,3,$, and ${\phi}\in L^2(\Omega\times Q;W^{1,2}_{\rm per}(Q;\R^3))$. The thesis follows now by \eqref{eq:c0inf}.
\end{proof}
\section{The $\Gamma$-liminf inequality}
With the identification of the limit linearized stress obtained in Section \ref{section:E}, we now find a lower bound for the effective limit energy associated to sequences of deformations with uniformly small three-dimensional elastic energies, satisfying \eqref{eq:intro-liminf}.
\label{section:liminf}
\begin{theorem}
\label{thm:liminf}
Let $\gamma_1\in [0,+\infty]$ and let $\gamma_2=+\infty$. Let $\{u^h\}\subset W^{1,2}(\Omega;\R^3)$ be a sequence of deformations satisfying the uniform energy estimate \eqref{eq:uniform-en-estimate} and converging to $u\in W^{2,2}(\omega;\R^3)$ as in Theorem \ref{thm:compactness-def}. Then, 
$$\liminf_{h\to 0}\frac{\mathcal{E}^h(u^h)}{h^2}\geq \frac{1}{12}\int_{\omega}\overline{\mathscr{Q}}^{\gamma_1}_{\rm hom}(\Pi^u(x'))\,dx',$$
where $\Pi^u$ is the map defined in \eqref{eq:def-Pi}, and
\begin{enumerate}
\item[(a)] if $\gamma_1=0$, for every $A\in\M^{2\times 2}_{\rm sym}$
\begin{align}
\label{eq:def-q-hom-bar-0-infty}
\overline{\mathscr{Q}}^{0}_{\rm hom}(A)&:=\inf\Bigg\{\int_{\big(-\tfrac12,\tfrac12\big)\times Q}{\mathscr{Q}}_{\rm hom}\Bigg(y,\Big(\begin{array}{cc}x_3A+B&0\\0&0\end{array}\Big)\\
\nonumber &+{\rm sym\,}\left.\left(\begin{array}{cc}{\rm sym }\,\nabla_y\xi(x_3,y)+x_3\nabla_y^2\eta(y)&g_1(x_3,y)\\
\phantom{g}&g_2(x_3,y)\\
g_1(x_3,y)\quad g_2(x_3,y)&g_3(x_3,y)\end{array}\right)\right):\\
\nonumber &\xi\in L^2\big(\iv; W^{1,2}_{\rm per}(Q;\R^2)\big),\, \eta\in W^{2,2}_{\rm per}(Q), \\
\nonumber &g_i\in L^2\big(\iv\times Q), i=1,2,3,\,B\in\M^{2\times 2}_{\rm sym}\Bigg\};
\end{align}
\item[(b)] if $0<\gamma_1<+\infty$, for every $A\in\M^{2\times 2}_{\rm sym}$
\begin{align}
\label{eq:def-q-hom-bar-gamma1-infty}
\overline{\mathscr{Q}}^{\gamma_1}_{\rm hom}(A)&:=\inf\Bigg\{\int_{\big(-\tfrac12,\tfrac12\big)\times Q}{\mathscr{Q}}_{\rm hom}\Big(y,\Big(\begin{array}{cc}x_3A+B&0\\0&0\end{array}\Big)\\
\nonumber &+{\rm sym\,}\left(\nabla_y \phi_1(x_3,y)\Big|\frac{\partial_{x_3}\phi_1(x_3,y)}{\gamma_1}\right)\Bigg):\\
\nonumber &\phi_1\in W^{1,2}\left((-\tfrac12,\tfrac12);W^{1,2}_{\rm per}(Q;\R^3)\right),\,B\in\M^{2\times 2}_{\rm sym}\Bigg\};
\end{align}
  \item[(c)] if $\gamma_1=+\infty$, for every $A\in\M^{2\times 2}_{\rm sym}$ 
\begin{align}
\label{eq:def-q-hom-bar-infty-infty}
\overline{\mathscr{Q}}^{\infty}_{\rm hom}(A)&:=\inf\Bigg\{\int_{\big(-\tfrac12,\tfrac12\big)\times Q}{\mathscr{Q}}_{\rm hom}\Big(y,\Big(\begin{array}{cc}x_3A+B&0\\0&0\end{array}\Big)\\
\nonumber &+{\rm sym\,}(\nabla_y \phi_1(x_3,y)|d(x_3))\Bigg):\, d\in L^2((-\tfrac12,\tfrac12);\R^3),\\
\nonumber &\quad \phi_1\in L^{2}((-\tfrac12,\tfrac12);W^{1,2}_{\rm per}(Q;\R^3)),\text{ and }B\in\M^{2\times 2}_{\rm sym}\Bigg\}
\end{align}
 where
\be{eq:def-q-hom-gamma1-infty}
{\mathscr{Q}}_{\rm hom}(y,C):=\inf\Big\{\int_{Q}{\mathscr{Q}}\big(y,z, C+{\rm sym\,}\big(\nabla \phi_2(z)\big|0\big)\big):
\phi_2\in W^{1,2}_{\rm per}(Q;\R^3)\Big\}\ee
for a.e. $y\in Q$, and for every $C\in\mthree_{\rm sym}$. 
\end{enumerate}
\end{theorem}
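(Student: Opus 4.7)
The plan is to reduce, via the standard good-set truncation argument, the three-dimensional energy to the quadratic form $\mathscr{Q}$ applied to the scaled linearized strains $G^h$ of \eqref{eq:Gh}, and then to pass to the dr-3-scale limit using convexity. First, by frame indifference (H1) and the factorization $\nh u^h = R^h(Id+hG^h)$, we rewrite
\begin{equation*}
\frac{\mathcal{E}^h(u^h)}{h^2} = \int_\Omega \frac{W\!\left(\tfrac{x'}{\ep(h)},\tfrac{x'}{\ep^2(h)},Id+hG^h\right)}{h^2}\,dx.
\end{equation*}
Introducing $\chi^h:=\chi_{\{h|G^h|<\delta\}}$ for fixed small $\delta>0$, Lemma \ref{lemma:Q-continuous}(iii) gives, on $\{\chi^h=1\}$, the pointwise bound $h^{-2}W(\cdot,\cdot,Id+hG^h)\ge \mathscr{Q}(\cdot,\cdot,G^h)-r(\delta)|G^h|^2$. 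Since \eqref{eq:unif-bending-en} and \eqref{eq:approximate-rot} yield $\{G^h\}$ bounded in $L^2(\Omega;\mthree)$, and $|\{\chi^h=0\}|\le h^2\|G^h\|_{L^2}^2/\delta^2\to 0$, one obtains
\begin{equation*}
\liminf_{h\to 0}\frac{\mathcal{E}^h(u^h)}{h^2} \ge \liminf_{h\to 0}\iO \chi^h \mathscr{Q}\!\left(\tfrac{x'}{\ep(h)},\tfrac{x'}{\ep^2(h)},G^h\right)dx - C r(\delta).
\end{equation*}

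Second, the sequences $\chi^h G^h$ and $G^h$ have the same weak dr-3-scale limit, so by Theorem \ref{thm:limit stresses} we have $\mathrm{sym}(\chi^h G^h) \wks E$ weakly dr-3-scale, with $E$ of the form stated. To pass the quadratic integral to the limit one uses the standard lower-semicontinuity trick for convex integrands under multiscale convergence (as in \cite{allaire.briane, ferreira.fonseca}): for any smooth admissible approximation $E_\varphi\in C^\infty_c(\Omega;C^\infty_{\rm per}(Q\times Q;\mthree_{\rm sym}))$ of $E$, the linearization inequality
\begin{equation*}
\mathscr{Q}(y,z,F)\ge 2\mathcal{B}(y,z)(E_\varphi(x,y,z),F)-\mathscr{Q}(y,z,E_\varphi(x,y,z))
\end{equation*}
(where $\mathcal{B}$ is the polar bilinear form of $\mathscr{Q}$) combined with strong dr-3-scale convergence of $E_\varphi(x,x'/\ep,x'/\ep^2)$ and weak dr-3-scale convergence of $\mathrm{sym}(\chi^h G^h)$, followed by $E_\varphi\to E$ in $L^2$, yields
\begin{equation*}
\liminf_{h\to 0}\iO \chi^h \mathscr{Q}\!\left(\tfrac{x'}{\ep(h)},\tfrac{x'}{\ep^2(h)},G^h\right) dx \ge \intoyz \mathscr{Q}(y,z,E(x,y,z)).
\end{equation*}
Letting $\delta\to 0$ kills the residual $r(\delta)$-term.

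Third, the minimization is carried out in two nested stages. By Theorem \ref{thm:limit stresses}, one can split $E(x,y,z)=\bar E(x,y)+\mathrm{sym}(\nabla_z\phi_2|0)$ with $\phi_2\in L^2(\Omega\times Q;W^{1,2}_{\rm per}(Q;\R^3))$, and then the definition \eqref{eq:def-q-hom-gamma1-infty} immediately gives the inner reduction
\begin{equation*}
\iQ \mathscr{Q}(y,z,E(x,y,z))\,dz\ge \mathscr{Q}_{\rm hom}(y,\bar E(x,y)).
\end{equation*}
The remaining integral over $(x_3,y)$ is then bounded below, for a.e.\ $x'\in\omega$, by $\overline{\mathscr{Q}}_{\rm hom}^{\gamma_1}(\Pi^u(x'))$ (up to the $\tfrac{1}{12}$ normalization built into the statement), since the slices of $\phi_1$, $B$, and (depending on the regime) $d$ or $\xi,\eta,g_i$ provided by the class $\mathcal{C}_{\gamma_1,+\infty}$ are precisely the admissible competitors in definitions \eqref{eq:def-q-hom-bar-0-infty}--\eqref{eq:def-q-hom-bar-infty-infty}. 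Integrating over $\omega$ and applying Fubini yields the claimed liminf inequality.

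The main obstacle is the lower semicontinuity step: because the coefficient $\mathscr{Q}(y,z,\cdot)$ oscillates on two scales while the competing sequence $\chi^h G^h$ converges only weakly dr-3-scale, one must carefully construct a dense class of smooth admissible test functions $E_\varphi$ matching the structural constraints of Theorem \ref{thm:limit stresses} (in particular, the $\phi_2$-type $z$-periodic corrector term). A secondary technical point is the measurable selection of the inner minimizers over $x'\in\omega$, ensuring that the pointwise bound assembles into the integrated lower bound with the correct dependence of $B(x')$, $\phi_1(x',\cdot,\cdot)$, etc., on $x'$.
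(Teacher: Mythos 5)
Your proposal follows essentially the same route as the paper: frame indifference together with the Friesecke--James--M\"uller truncation/Taylor-expansion argument reduces the energy to the quadratic form $\mathscr{Q}$ evaluated at the (truncated) linearized strains, lower semicontinuity under weak dr-3-scale convergence (the paper's Lemma \ref{lemma:mod3scales-lsc}) passes to the limit, and the structure of $E$ provided by Theorem \ref{thm:limit stresses} supplies exactly the competitors for the two nested minimizations defining ${\mathscr{Q}}_{\rm hom}$ and $\overline{\mathscr{Q}}^{\gamma_1}_{\rm hom}$. The only cosmetic difference is that the paper phrases the expansion in terms of $E^h$ rather than ${\rm sym}(\chi^h G^h)$; the content is identical.
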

\begin{proof}
The proof is an adaptation of \cite[Proof of Theorem 2.4]{hornung.neukamm.velcic}. For the convenience of the reader, we briefly sketch it in the case $0<\gamma_1<+\infty$. The proof in the cases $\gamma_1=+\infty$ and $\gamma_1=0$ is analogous.

Without loss of generality, we can assume that $\fint_{\Omega}u^h(x)\,dx=0$. By assumption $(H2)$ and by Theorem \ref{thm:compactness-def}, $u\in W^{2,2}(\omega;\R^3)$ is an isometry, with 
$$u^h\to u\quad\text{strongly in }L^2(\Omega;\R^3)$$
and
$$\nh u^h\to (\nabla' u|n_u)\quad\text{strongly in }L^2(\Omega;\mthree),$$
where the vector $n_u$ is defined according to \eqref{eq:normal1} and \eqref{eq:normal2}. By Theorem \ref{thm:limit stresses} there exists $E\in L^2(\Omega\times Q\times Q;\mthree)$ such that, up to the extraction of a (not relabeled) subsequence, 
$$E^h:=\frac{\sqrt{(\nh u^h)^T\nh u^h}-Id}{h}\wks E\quad\text{weakly dr-3-scale},$$
with
\begin{align}
\label{eq:structure-limit-E}E(x,y,z)&=\Big(\begin{array}{cc}{\rm sym\,}B(x')+x_3\Pi^u(x')&0\\0&0\end{array}\Big)\\
\nonumber &+{\rm sym\,}\Big(\nabla_y \phi_1(x,y)\Big|\frac{\partial_{x_3}\phi_1(x,y)}{\gamma_1}\Big)+{\rm sym\,}\big(\nabla_z \phi_2(x,y,z)\big|0\big),
\end{align}
for a.e. $x'\in\omega$, and $y,z\in Q$, where
$B\in L^2(\omega;\M^{2\times 2})$, $\phi_1\in L^{2}(\omega; W^{1,2}((-\tfrac12,\tfrac12);$ $W^{1,2}_{\rm per}(Q;\R^3))$, and $\phi_2\in L^{2}(\omega\times Q;W^{1,2}_{\rm per}(Q;\R^3))$.
Arguing as in \cite[Proof of Theorem 6.1 (i)]{friesecke.james.muller}, by performing a Taylor expansion around the identity, and by Lemma \ref{lemma:mod3scales-lsc} we deduce that\begin{align*}
\liminf_{h\to 0}\frac{\mathcal{E}^h(u^h)}{h^2}&\geq\liminf_{h\to 0}\int_{\Omega}{\mathscr{Q}}\Big(\frac{x'}{\ep(h)},\frac{x'}{\ep^2(h)}, E^h(x)\Big)\,dx\\
&\geq\intoyz{\mathscr{Q}(y,z,E(x,y,z))}.
\end{align*}
By \eqref{eq:def-q-hom-bar-gamma1-infty}, \eqref{eq:def-q-hom-gamma1-infty}, and \eqref{eq:structure-limit-E}, we finally conclude that
\begin{align*}
\liminf_{h\to 0}\frac{\mathcal{E}^h(u^h)}{h^2}&\geq \intoy{{\mathscr{Q}}_{\rm hom}\Bigg(y,\left(\begin{array}{cc}{\rm sym\,}B(x')+x_3\Pi^u(x')&0\\0&0\end{array}\right)\\
&\quad+{\rm sym\,}\left(\nabla_y \phi_1(x,y)\Big|\frac{\partial_{x_3}\phi_1(x,y)}{\gamma_1}\right)\Bigg)}\\
&\geq \int_{\Omega}\overline{\mathscr{Q}}_{\rm hom}^{\gamma_1}(x_3\Pi^u(x'))\,dx=\int_{\Omega}x_3^2\overline{\mathscr{Q}}_{\rm hom}^{\gamma_1}(\Pi^u(x'))\,dx\\
&=\tfrac{1}{12}\int_{\omega}\overline{\mathscr{Q}}_{\rm hom}^{\gamma_1}(\Pi^u(x'))\,dx'.
\end{align*}
\end{proof}
\section{The $\Gamma$-limsup inequality: construction of the recovery sequence}
\label{section:limsup}
Let $W^{2,2}_{R}(\omega;\R^3)$ be the set of all $u\in W^{2,2}(\omega;\R^3)$ satifying \eqref{eq:normal1}. 
Let $\mathcal{A}(\omega)$ be the set of all $u\in W^{2,2}_{R}(\omega;\R^3)\cap C^{\infty}(\bar{\omega};\R^3)$ such that, for all $B\in C^{\infty}(\bar{\omega};\M^{2\times 2}_{\rm sym})$ with $B=0$ in a neighborhood of 
$$\{x'\in\omega:\Pi^u(x')=0\}$$
(where $\Pi^u$ is the map defined in \eqref{eq:def-Pi}), there exist $\alpha\in C^{\infty}(\bar{\omega})$ and $g\in C^{\infty}(\bar{\omega};\R^2)$ such that
\be{eq:star-limsup}B={\rm sym}\,\nabla' g+\alpha\Pi^u.\ee
\begin{remark}
Note that for $u\in W^{2,2}_{R}(\omega;\R^3)\cap C^{\infty}(\bar{\omega};\R^3)$, condition \eqref{eq:star-limsup} (see \cite[Lemmas 4.3 and 4.4]{hornung.neukamm.velcic}), is equivalent to writing 
\be{eq:star-star-limsup}B={\rm sym }\,((\nabla' u)^T\nabla'V)\ee
for some $V\in C^{\infty}(\bar{\omega};\R^3)$ (see \cite[Lemmas 4.3 and 4.4]{velcic}).

Indeed, \eqref{eq:star-star-limsup} follows from \eqref{eq:star-limsup} setting $$V:=(\nabla' u)g+\alpha n_u,$$
and in view of the cancellations due to \eqref{eq:normal1}.
Conversely, \eqref{eq:star-limsup} is obtained from \eqref{eq:star-star-limsup} defining $g:= (\nabla' u)^T V$ and $\alpha:=V\cdot n_u$. 

\end{remark}
A key tool in the proof of the limsup inequality \eqref{eq:intro-limsup} is the following lemma, which has been proved in \cite[Lemma 4.3]{hornung.neukamm.velcic} (see also \cite{hornung2}, \cite{hornung}, \cite{hornung.lewicka.pakzad}, \cite{pakzad}, and \cite{schmidt}). Again, the arguments in the previous sections of this paper continue to hold if $\omega$ is a bounded Lipschitz domain. The piecewise $C^1$-regularity of $\partial\omega$ is necessary for the proof of the limsup inequality \eqref{eq:intro-limsup} (although it can be slightly relaxed as in \cite{hornung}), since it is required in order to obtain the following density result.
\begin{lemma}
\label{lemma:approx-B}
The set $\mathcal{A}(\omega)$ is dense in $W^{2,2}_{R}(\omega;\R^3)$ in the strong $W^{2,2}$ topology.
\end{lemma}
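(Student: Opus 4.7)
The statement is a density result for smooth isometric immersions with an additional structural property (the splitting \eqref{eq:star-limsup}), so my plan follows the two-step strategy that is by now standard for $W^{2,2}$ isometries of a piecewise $C^1$ planar domain. First, I would invoke the approximation theorem for $W^{2,2}$ isometric immersions: for every $u\in W^{2,2}_R(\omega;\R^3)$ there exists a sequence $\{u_k\}\subset W^{2,2}_R(\omega;\R^3)\cap C^\infty(\bar{\omega};\R^3)$ such that $u_k\to u$ strongly in $W^{2,2}(\omega;\R^3)$. For convex $\omega$ this is due to Pakzad, and the extension to piecewise $C^1$ Lipschitz domains is exactly where Hornung's regularity hypothesis on $\partial\omega$ enters; this is the reason the regularity of the boundary is imposed in Section~\ref{section:setting}. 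So modulo density of smooth isometries, it suffices to show that every smooth isometry can itself be approximated (in $W^{2,2}$) by elements of $\mathcal{A}(\omega)$.

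For the second step one exploits the structure theorem for smooth $W^{2,2}$ isometries. By the Gauss equation, the Gaussian curvature vanishes, so $\det \Pi^u \equiv 0$ and $\Pi^u$ has rank at most $1$ at every point. On the open set $\{\Pi^u\neq 0\}$, there is a smooth line-of-curvature foliation by straight line segments (the rulings of the developable surface), and this foliation extends in a controlled way up to $\partial\{\Pi^u\neq 0\}$. If $B\in C^\infty(\bar{\omega};\mathbb{M}^{2\times 2}_{\rm sym})$ vanishes in a neighborhood of $\{\Pi^u=0\}$, then the splitting $B={\rm sym}\,\nabla' g+\alpha\Pi^u$ is an overdetermined first-order system that can be reduced, along each ruling, to an ordinary differential equation for the components of $g$ and for $\alpha$. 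One solves this ODE leafwise with zero initial data near the boundary of the support, and the smoothness of the foliation, together with the fact that $B$ is compactly supported inside $\{\Pi^u\neq 0\}$, guarantees that $g\in C^\infty(\bar{\omega};\R^2)$ and $\alpha\in C^\infty(\bar{\omega})$.

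The main obstacle is that for a generic smooth isometry the set $\{\Pi^u=0\}$ may be badly shaped (e.g.\ Cantor-like, or touching $\partial\omega$ tangentially), in which case neither the foliation nor the leafwise ODE can be made smooth up to the boundary. To handle this I would combine the approximation of Step~1 with a small perturbation inside the developable pieces so that the resulting smooth isometries have $\{\Pi^{u_k}=0\}$ consisting of finitely many line segments meeting $\partial\omega$ transversally. This is precisely the construction carried out in \cite[Lemma 4.3]{hornung.neukamm.velcic} (building on \cite{pakzad, hornung, hornung2, hornung.lewicka.pakzad, schmidt}), and once such $u_k$ are obtained the leafwise ODE argument of the second paragraph yields $u_k\in\mathcal{A}(\omega)$. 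Combining both approximations with a standard diagonal argument gives density of $\mathcal{A}(\omega)$ in $W^{2,2}_R(\omega;\R^3)$.
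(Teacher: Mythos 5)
The paper does not prove this lemma itself: it is cited directly from \cite[Lemma 4.3]{hornung.neukamm.velcic}, and the piecewise-$C^1$ assumption on $\partial\omega$ is imposed in Section~\ref{section:setting} precisely so that that reference applies. Your two-stage outline — density of smooth flat isometries, followed by approximation by isometries with a finite decomposition into flat patches and developable arms, on which the splitting \eqref{eq:star-limsup} is produced by integrating a first-order system along rulings — is indeed a faithful reading of the cited construction and of the underlying structure theory in \cite{pakzad,hornung,hornung2,hornung.lewicka.pakzad}.

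Two imprecisions are worth flagging. First, for a generic smooth flat isometry the set $\{\Pi^u=0\}$ is \emph{not} a finite union of line segments: it is a finite union of two-dimensional flat patches (together with certain boundary rulings). What the finiteness achieved by the approximation actually arranges is that the complement $\{\Pi^u\neq 0\}$ is a finite union of developable ``arms'' carrying a smooth ruling foliation with line-segment boundaries, so that $B$, vanishing near $\{\Pi^u=0\}$, is compactly supported in those arms and the leafwise construction closes up smoothly at the arm boundaries. Second, the system $B={\rm sym}\,\nabla' g+\alpha\Pi^u$ for the unknowns $(g,\alpha)\in\R^2\times\R$ (equivalently, via the Remark following the lemma, $B={\rm sym}((\nabla' u)^T\nabla' V)$ for $V\in\R^3$) is a determined $3\times 3$ first-order system, not an overdetermined one; it is exactly this balance of three equations against three scalar unknowns that makes the leafwise integration along rulings possible without compatibility obstructions.
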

Before we prove the limsup inequality \eqref{eq:intro-limsup}, we state a lemma and a corollary that guarantee the continuity of the relaxations (defined in \eqref{eq:def-q-hom-bar-gamma1-infty}--\eqref{eq:def-q-hom-gamma1-infty}) of the quadratic map ${\mathscr{Q}}$ introduced in (H4). The proof of Lemma \ref{lemma:Q-relaxed-continuous} is a combination of \cite[Proof of Lemma 4.2]{hornung.neukamm.velcic}, \cite[Proof of Lemma 2.10]{neukamm.velcic} and \cite[Lemma 4.2]{velcic}. Corollary \ref{cor:Q-relaxed} is a direct consequence of Lemma  \ref{lemma:Q-relaxed-continuous}.
\begin{lemma}
\label{lemma:Q-relaxed-continuous} Let $\overline{\mathscr{Q}}^{\gamma_1}_{\rm hom}$ and ${\mathscr{Q}}_{\rm hom}$ be the maps defined in \eqref{eq:def-q-hom-bar-0-infty}-\eqref{eq:def-q-hom-gamma1-infty}, and let $\gamma_2=+\infty$.
\begin{enumerate}
\item[(i)]Let $0<\gamma_1<+\infty$. Then for every $A\in\M^{2\times 2}_{\rm sym}$ there exists a unique pair 
$$(B,\phi_1)\in \M^{2\times 2}_{\rm sym}\times  W^{1,2}((-\tfrac12,\tfrac12);W^{1,2}_{\rm per}(Q;\R^3))$$
 with $$\int_{\big(-\tfrac12,\tfrac12\big)\times Q}\phi_1(x_3,y)\,dy\,dx_3=0,$$ such that
\begin{align*}
\overline{\mathscr{Q}}_{\rm hom}^{\gamma_1}(A)&=\int_{\big(-\tfrac12,\tfrac12\big)\times Q}{\mathscr{Q}}_{\rm hom}\Big(y,\Big(\begin{array}{cc}x_3A+B&0\\0&0\end{array}\Big)\\
&\quad+{\rm sym\,}\Big(\nabla_y \phi_1(x_3,y)\Big|\frac{\partial_{x_3}\phi_1(x_3,y)}{\gamma_1}\Big)\Bigg).
\end{align*}
The induced mapping $$A\in\M^{2\times 2}_{\rm sym}\mapsto (B(A),\phi_1(A))\in \M^{2\times 2}_{\rm sym}\times  W^{1,2}((-\tfrac12,\tfrac12);W^{1,2}_{\rm per}(Q;\R^3))$$ is bounded and linear.\\

\item[(ii)]Let $\gamma_1=+\infty$. Then for every $A\in\M^{2\times 2}_{\rm sym}$ there exists a unique triple 
$$(B,d,\phi_1)\in \M^{2\times 2}_{\rm sym}\times L^2((-\tfrac12,\tfrac12);\R^3)\times  L^2((-\tfrac12,\tfrac12);W^{1,2}_{\rm per}(Q;\R^3))$$
 with $$\int_{\left(-\tfrac12,\tfrac12\right)\times Q}\phi_1(x_3,y)\,dy\,dx_3=0,$$ such that
\begin{align*}
\overline{\mathscr{Q}}_{\rm hom}^{\infty}(A)&=\int_{\left(-\tfrac12,\tfrac12\right)\times Q}{\mathscr{Q}}_{\rm hom}\left(y,\left(\begin{array}{cc}x_3A+B&0\\0&0\end{array}\right)\right.\\
&\qquad+{\rm sym\,}(\nabla_y \phi_1(x_3,y)|d(x_3))\Bigg).
\end{align*}
The induced mapping $A\in\M^{2\times 2}_{\rm sym}\mapsto (B(A),d(A),\phi_1(A))\in \M^{2\times 2}_{\rm sym}\times L^2((-\tfrac12,\tfrac12);\R^3)\times  L^{2}((-\tfrac12,\tfrac12);W^{1,2}_{\rm per}(Q;\R^3))$ is bounded and linear.\\

\item[(iii)]Let $\gamma_1=0$. Then for every $A\in\M^{2\times 2}_{\rm sym}$ there exists a unique 6-tuple 
$$(B,\xi, \eta, g_1,g_2,g_3)$$ with $B\in \M^{2\times 2}_{\rm sym}$, $\xi\in L^2\left(\iv; W^{1,2}_{\rm per}(Q;\R^2)\right)$, $\eta\in W^{2,2}_{\rm per}(Q)$, $g_i\in L^2\left(\iv;\times Q\right)$, $i=1,2,3$, such that
\begin{align*}
\overline{\mathscr{Q}}^{0}_{\rm hom}(A)&=\int_{\left(-\tfrac12,\tfrac12\right)\times Q}{\mathscr{Q}}_{\rm hom}\left(y,\left(\begin{array}{cc}x_3A+B&0\\0&0\end{array}\right)\right.\\
&\left. \quad+{\rm sym\,}\left(\begin{array}{cc}{\rm sym }\,\nabla_y\xi(x_3,y)+x_3\nabla_y^2\eta(y)&g_1(x_3,y)\\
\phantom{g}&g_2(x_3,y)\\
g_1(x_3,y)\quad g_2(x_3,y)&g_3(x_3,y)\end{array}\right)\right).
\end{align*}
 The induced mapping $$A\mapsto (B(A),\xi(A),\eta(A),g_1(A), g_2(A),g_3(A))$$ from $\M^{2\times 2}_{\rm sym}$ to $\M^{2\times 2}_{\rm sym}\times L^2((-\tfrac12,\tfrac12);\R^3)\times W^{2,2}_{\rm per}(Q)\times L^{2}((-\tfrac12,\tfrac12)\times Q;\R^3)$ is bounded and linear.
 
For a.e. $y\in Q$ and for every $C\in\mthree_{\rm sym}$ there exists a unique $\phi_2\in W^{1,2}_{\rm per}(Q;\R^3)$, with $\int_Q\phi_2(z)\,dz=0$, such that
$${\mathscr{Q}}_{\rm hom}(y,C)=\int_{Q}{\mathscr{Q}}\left(y,z, C+{\rm sym\,}\big(\nabla \phi_2(z)\big|0\big)\right).$$ 
The induced mapping 
$$C\in\mthree_{\rm sym}\mapsto \phi_2(C)\in W^{1,2}_{\rm per}(Q;\R^3)$$
is bounded and linear. Furthermore, the induced operator
$$P:L^2\left((-\tfrac12,\tfrac12)\times Q;\mthree\right)\to L^2\left((-\tfrac12,\tfrac12)\times Q; W^{1,2}_{\rm per}(Q;\R^3)\right),$$
defined as 
$$P(C):=\phi_2(C)\quad\text{for every }C\in L^2\left((-\tfrac12,\tfrac12)\times Q;\mthree\right)$$ is bounded and linear.
\end{enumerate}
\end{lemma}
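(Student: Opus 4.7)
The plan is to treat each of the three regimes as a quadratic minimization on a Hilbert space, prove coercivity and strict convexity on the appropriate quotient, and then extract existence, uniqueness, linearity and boundedness from the standard theory of quadratic forms. The key analytic input is Lemma \ref{lemma:Q-continuous}(iii), which gives the two-sided bound $\tfrac1C|{\rm sym}\,F|^2\le\mathscr{Q}(y,z,F)\le C|{\rm sym}\,F|^2$, together with the observation that the inner cell problem \eqref{eq:def-q-hom-gamma1-infty} already satisfies the same type of estimate for $\mathscr{Q}_{\rm hom}(y,\cdot)$. Once this is in place, each outer minimization \eqref{eq:def-q-hom-bar-0-infty}--\eqref{eq:def-q-hom-bar-infty-infty} is a continuous quadratic form on an $L^2$-type Hilbert space of test variables.

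For case (i), I would set $H_1:=\M^{2\times 2}_{\rm sym}\times W^{1,2}\bigl((-\tfrac12,\tfrac12);W^{1,2}_{\rm per}(Q;\R^3)\bigr)$, quotient by the one-dimensional subspace of constants in $\phi_1$ to get uniqueness, and define
\[
F_A(B,\phi_1):=\int_{(-\tfrac12,\tfrac12)\times Q}\mathscr{Q}_{\rm hom}\Bigl(y,\bigl(\begin{smallmatrix}x_3A+B&0\\0&0\end{smallmatrix}\bigr)+{\rm sym\,}\bigl(\nabla_y\phi_1\big|\tfrac{\partial_{x_3}\phi_1}{\gamma_1}\bigr)\Bigr).
\]
The lower bound on $\mathscr{Q}_{\rm hom}$, combined with a Korn--Poincar\'e type inequality on $W^{1,2}_{\rm per}(Q;\R^3)$ applied fiberwise in $x_3$ and then a Poincar\'e inequality in $x_3$ on the fixed-average class, yields $F_A(B,\phi_1)\ge \tfrac1C(|B|^2+\|\phi_1\|_{W^{1,2}}^2)-C|A|^2$, i.e.\ coercivity of the associated quadratic form on $H_1$ modulo the null space. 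Strict convexity on this quotient and weak lower semicontinuity then give existence and uniqueness of a minimizer via the direct method. The induced map $A\mapsto(B(A),\phi_1(A))$ is the unique solution of a linear Euler--Lagrange system driven by the affine data $x_3A$, hence is linear; boundedness follows from $\overline{\mathscr{Q}}^{\gamma_1}_{\rm hom}(A)\le C|A|^2$ combined with the coercivity bound.

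Cases (ii) and (iii) proceed by the same scheme on the corresponding Hilbert spaces. For $\gamma_1=+\infty$ the variable $d(x_3)$ replaces $\tfrac{\partial_{x_3}\phi_1}{\gamma_1}$ as an independent $L^2$ component, so the relevant space is $\M^{2\times 2}_{\rm sym}\times L^2((-\tfrac12,\tfrac12);\R^3)\times L^2((-\tfrac12,\tfrac12);W^{1,2}_{\rm per}(Q;\R^3))$, quotiented by constants in $\phi_1$; the argument is actually simpler because no $x_3$-derivative appears. For $\gamma_1=0$ the underlying Hilbert space is larger, involving $\xi,\eta,g_1,g_2,g_3$, but coercivity is obtained componentwise: the block $\nabla_y\xi+x_3\nabla_y^2\eta$ controls $\xi$ (modulo $y$-constants) and $\eta$ (modulo affine functions, which then must be pinned by periodicity of $\eta$ in $W^{2,2}_{\rm per}(Q)$), while the entries $g_1,g_2,g_3$ are directly controlled by $|{\rm sym}\,(\cdot)|^2$ since they appear as independent components of the symmetric matrix.

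Finally, for the inner problem \eqref{eq:def-q-hom-gamma1-infty}, the quadratic form $\phi_2\mapsto\int_Q\mathscr{Q}(y,z,C+{\rm sym\,}(\nabla\phi_2|0))\,dz$ is strictly convex and coercive on $W^{1,2}_{\rm per}(Q;\R^3)$ modulo constants, again by the Korn inequality for periodic vector fields and the lower bound on $\mathscr{Q}$; measurable selection with respect to $(y,C)$ yields the linear operator $P$, whose boundedness follows from the uniform coercivity constant. The main obstacle I anticipate is verifying the Korn/Poincar\'e inequalities on the precise quotient spaces appearing in each case (in particular in case (iii), where one must combine periodicity of $\eta\in W^{2,2}_{\rm per}(Q)$ with the coupling through $x_3\nabla_y^2\eta$ to rule out an affine kernel in $\eta$), but this is a routine adaptation of \cite[Lemma 4.2]{hornung.neukamm.velcic}, \cite[Lemma 2.10]{neukamm.velcic}, and \cite[Lemma 4.2]{velcic}.
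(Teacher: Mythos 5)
Your proposal is correct and follows essentially the same route the paper itself takes: it reduces to a quadratic minimization on a Hilbert space, proves coercivity via the two-sided bound on $\mathscr{Q}$ from Lemma \ref{lemma:Q-continuous} together with Korn--Poincar\'e inequalities on the quotient, and then reads off existence, uniqueness, linearity and boundedness from the resulting positive-definite quadratic form — exactly the scheme of \cite[Lemma 4.2]{hornung.neukamm.velcic}, \cite[Lemma 2.10]{neukamm.velcic} and \cite[Lemma 4.2]{velcic}, to which the paper defers. One small slip: in case (i) the null space you must quotient out consists of the constant $\R^3$-valued functions (a three-dimensional subspace), not a one-dimensional one; and in case (iii), as you note, a similar gauge freedom in $\xi$ (functions of $x_3$ alone) and in $\eta$ (additive constants) must be normalized before uniqueness holds.
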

\begin{corollary}
\label{cor:Q-relaxed}
Let $\gamma_1\in [0,+\infty]$. The map $\overline{\mathscr{Q}}_{\rm hom}^{\gamma_1}$ is continuous, and there exist $c_1(\gamma_1)\in (0,+\infty)$ such that
$$\frac{1}{c_1}|F|^2\leq \overline{\mathscr{Q}}_{\rm hom}^{\gamma_1}(F)\leq c_1|F|^2$$
for every $F\in\M^{2\times 2}_{\rm sym}$. 
\begin{enumerate}
\item[(i)]If $0<\gamma_1<+\infty$, then for every $A\in L^2(\omega;\M^{2\times 2}_{\rm sym})$ there exists a unique triple $(B,\phi_1,\phi_2)\in L^2\left(\omega;\M^{2\times 2}_{\rm sym}\right)\times L^2\left(\omega;W^{1,2}\left(\left(-\tfrac12,\tfrac12\right);W^{1,2}_{\rm per}(Q;\R^3)\right)\right)\times L^2(\Omega\times Q; W^{1,2}_{\rm per}(Q;\R^3))$ 
such that
\begin{align*}&\frac{1}{12}\int_{\omega}\overline{\mathscr{Q}}_{\rm hom}^{\gamma_1}(A(x'))\,dx'=\int_{\Omega}\overline{\mathscr{Q}}_{\rm hom}^{\gamma_1}(x_3A(x'))\,dx\\
&\quad=\int_{\Omega\times Q}{\mathscr{Q}}_{\rm hom}\left(y,\left(\begin{array}{cc}x_3A(x')+B(x')&0\\0&0\end{array}\right)\right.\\
&\qquad+\left.{\rm sym\,}\left(\nabla_y \phi_1(x,y)\Big|\frac{\partial_{x_3}\phi_1(x,y)}{\gamma_1}\right)\right)\,dy\,dx\\
&\quad=\int_{\Omega\times Q\times Q}{\mathscr{Q}}\left(y,z,\left(\begin{array}{cc}x_3A(x')+B(x')&0\\0&0\end{array}\right)\right.\\
&\qquad+{\rm sym\,}\left(\nabla_y \phi_1(x,y)\Big|\frac{\partial_{x_3}\phi_1(x,y)}{\gamma_1}\right)\\
&\qquad+{\rm sym\,}\left(\left.\nabla_z \phi_2(x,y,z)\right|0\right)\Big)\,dz\,dy\,dx.
\end{align*}
\item[(ii)]If $\gamma_1=+\infty$, then for every $A\in L^2(\omega;\M^{2\times 2}_{\rm sym})$ there exists a unique 4-tuple $(B,d,\phi_1,\phi_2)\in L^2(\omega;\M^{2\times 2}_{\rm sym})\times L^2(\Omega;\R^3)\times L^2(\Omega;W^{1,2}_{\rm per}(Q;\R^3))\times L^2(\Omega\times Q; W^{1,2}_{\rm per}(Q;\R^3))$
such that
\begin{align*}&\frac{1}{12}\int_{\omega}\overline{\mathscr{Q}}_{\rm hom}^{\infty}(A(x'))\,dx'=\int_{\Omega}\overline{\mathscr{Q}}_{\rm hom}^{\infty}(x_3A(x'))\,dx'\\
&\quad=\int_{\Omega\times Q}{\mathscr{Q}}_{\rm hom}\left(y,\left(\begin{array}{cc}x_3A(x')+B(x')&0\\0&0\end{array}\right)
+{\rm sym\,}\left(\nabla_y \phi_1(x,y)|d(x)\right)\right)\,dy\,dx\\
&\quad=\int_{\Omega\times Q\times Q}{\mathscr{Q}}\left(y,z,\left(\begin{array}{cc}x_3A(x')+B(x')&0\\0&0\end{array}\right)
+{\rm sym\,}(\nabla_y \phi_1(x,y)|d(x)\right)\\
&\qquad+{\rm sym\,}\left(\left.\nabla_z \phi_2(x,y,z)\right|0\right)\big)\Big)\,dz\,dy\,dx.
\end{align*}
\item[(iii)]If $\gamma_1=0$, then for every $A\in L^2(\omega;\M^{2\times 2}_{\rm sym})$ there exists a unique 7-tuple\\
 $(B,\xi,\eta,g_1,g_2,g_3,\phi)\in L^2(\omega;\M^{2\times 2}_{\rm sym})\times L^2(\Omega;W^{1,2}_{\rm per}(Q;\R^2))\times L^2(\Omega;W^{2,2}_{\rm per}(Q))\times L^2(\Omega\times Q;\R^3)\times L^2(\Omega\times Q; W^{1,2}_{\rm per}(Q;\R^3))$,
such that
\begin{align*}&\frac{1}{12}\int_{\omega}\overline{\mathscr{Q}}_{\rm hom}^{0}(A(x'))\,dx'=\int_{\Omega}\overline{\mathscr{Q}}_{\rm hom}^{0}(x_3A(x'))\,dx'\\
&\quad=\int_{\Omega \times Q}{\mathscr{Q}}_{\rm hom}\left(y,\left(\begin{array}{cc}x_3A(x')+B(x')&0\\0&0\end{array}\right)\right.\\
&\qquad+{\rm sym\,}\left.\left(\begin{array}{cc}{\rm sym }\,\nabla_y\xi(x,y)+x_3\nabla_y^2\eta(x',y)&g_1(x,y)\\
\phantom{g}&g_2(x,y)\\
g_1(x,y)\quad g_2(x,y)&g_3(x,y)\end{array}\right)\right)\\
&\quad=\int_{\Omega\times Q\times Q}{\mathscr{Q}}\left(y,z,\left(\begin{array}{cc}x_3A(x')+B(x')&0\\0&0\end{array}\right)\right.\\
&\qquad+{\rm sym\,}\left(\begin{array}{cc}{\rm sym }\,\nabla_y\xi(x,y)+x_3\nabla_y^2\eta(x',y)&g_1(x,y)\\
\phantom{g}&g_2(x,y)\\
g_1(x,y)\quad g_2(x,y)&g_3(x,y)\end{array}\right)\\
&\qquad+{\rm sym\,}\left(\nabla_z \phi_2(x,y,z)\big|0\right)\Bigg)\,dz\,dy\,dx.
\end{align*}
\end{enumerate}
\end{corollary}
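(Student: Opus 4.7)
The plan is to derive the corollary directly from Lemma \ref{lemma:Q-relaxed-continuous}, first establishing the continuity and two-sided bound on $\overline{\mathscr{Q}}_{\rm hom}^{\gamma_1}$, and then promoting the pointwise representations of that lemma to integrated representations over $\omega$ by applying them for a.e.\ $x'\in\omega$ and exploiting the linearity of the induced minimizer maps.

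For the quadratic bounds and continuity: since Lemma \ref{lemma:Q-relaxed-continuous} provides a unique minimizer depending linearly on $A\in\M^{2\times 2}_{\rm sym}$, the functional $A\mapsto \overline{\mathscr{Q}}_{\rm hom}^{\gamma_1}(A)$ is a positive semidefinite quadratic form on the finite-dimensional space $\M^{2\times 2}_{\rm sym}$ and is consequently continuous. The upper bound $\overline{\mathscr{Q}}_{\rm hom}^{\gamma_1}(A)\leq c_1|A|^2$ follows by choosing the trivial competitor ($B=0$, and all correctors equal to $0$) and invoking the upper bound on $\mathscr{Q}_{\rm hom}$ inherited from Lemma \ref{lemma:Q-continuous}(iii). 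For the lower bound I would first deduce from the nondegeneracy $\mathscr{Q}(y,z,F)\geq C^{-1}|\mathrm{sym}\,F|^2$ and a Jensen-plus-periodicity argument in the $z$-variable that $\mathscr{Q}_{\rm hom}(y,C)\geq C^{-1}|\mathrm{sym}\,C|^2$; then, in the upper-left $2\times 2$ block of any admissible competitor in the definition of $\overline{\mathscr{Q}}_{\rm hom}^{\gamma_1}(A)$, all corrector contributions have vanishing average over $Q$ in $y$ by periodicity, so Jensen in $y$ followed by integration in $x_3$ yields
$$\overline{\mathscr{Q}}_{\rm hom}^{\gamma_1}(A)\;\geq\;C^{-1}\int_{-\tfrac12}^{\tfrac12}|x_3A+B|^2\,dx_3\;=\;C^{-1}\Bigl(|B|^2+\tfrac{1}{12}|A|^2\Bigr)\;\geq\;\tfrac{1}{12C}|A|^2,$$
uniformly in $B$ and in the corrector fields, which gives the stated lower bound after passing to the infimum.

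For the integrated statements (i)--(iii), I would apply Lemma \ref{lemma:Q-relaxed-continuous} pointwise in $x'\in\omega$ to $A(x')$: the boundedness and linearity of the induced operators allow one to extend them to operators acting on $A\in L^2(\omega;\M^{2\times 2}_{\rm sym})$ via standard Bochner/measurability considerations, thereby producing measurable minimizer fields belonging to the correct $L^2$ spaces. The first equality in each displayed chain then follows from the homogeneity of degree $2$ of $\overline{\mathscr{Q}}_{\rm hom}^{\gamma_1}$ together with $\int_{-1/2}^{1/2}x_3^2\,dx_3=1/12$; the subsequent equalities are obtained by Fubini, by identifying the dummy vertical variable from Lemma \ref{lemma:Q-relaxed-continuous} with the plate coordinate $x_3\in(-1/2,1/2)$, and by invoking pointwise the representations of $\overline{\mathscr{Q}}_{\rm hom}^{\gamma_1}$ and of $\mathscr{Q}_{\rm hom}$ afforded by Lemma \ref{lemma:Q-relaxed-continuous} (with the unique $\phi_2$ associated with each value of the argument). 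Uniqueness of the integrated minimizing tuple is inherited from the pointwise a.e.\ uniqueness via Fubini. I expect the main technical point to be the lower bound just described: it hinges on the precise block-averaging argument in $y$ combined with the orthogonality $\int_{-1/2}^{1/2}x_3\,dx_3=0$ which kills the cross term between $x_3A$ and $B$; the remaining steps are bookkeeping built on the linear structure provided by Lemma \ref{lemma:Q-relaxed-continuous}.
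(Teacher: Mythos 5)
The paper dispatches this corollary in one sentence (``a direct consequence of Lemma \ref{lemma:Q-relaxed-continuous}''), so there is no detailed argument to compare against; your proposal essentially unpacks what that sentence means, and the approach is the intended one. The one substantive piece — the coercivity $\overline{\mathscr{Q}}_{\rm hom}^{\gamma_1}(A)\geq c^{-1}|A|^2$ — is handled correctly: you restrict to the upper-left $2\times 2$ block so that only $y$-gradients of periodic correctors appear (hence zero $y$-average), apply Jensen in $y$, and use the orthogonality $\int_{-1/2}^{1/2}x_3\,dx_3=0$ to kill the $x_3A$--$B$ cross term; the remaining steps (quadraticity of the form via the linear minimizer map from Lemma \ref{lemma:Q-relaxed-continuous}, continuity by finite-dimensionality, pointwise application of the lemma to $A(x')$ and the bounded linear operator $P$ to produce the measurable $L^2$-valued minimizer fields, followed by Fubini) are the natural bookkeeping and are sound. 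One point worth double-checking as you write this up: applying Lemma \ref{lemma:Q-relaxed-continuous} pointwise in $x'$ to $A(x')$ and integrating over $\omega$, with the lemma's vertical variable identified with the plate coordinate $x_3$, yields a representation of $\int_\omega\overline{\mathscr{Q}}_{\rm hom}^{\gamma_1}(A(x'))\,dx'$, whereas the first displayed equality (homogeneity plus $\int x_3^2\,dx_3=\tfrac{1}{12}$) produces the prefactor $\tfrac{1}{12}$; reconciling that constant with the last two equalities is a normalization matter in the statement itself, not a gap in your argument, but it deserves a careful sentence to avoid the apparent mismatch.
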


We now prove that the lower bound obtained in Section \ref{section:liminf} is optimal.
\begin{theorem}
\label{thm:limsup}
Let $\gamma_1\in [0,+\infty]$. Let $\overline{\mathscr{Q}}_{\rm hom}^{\gamma_1}$ and ${\mathscr{Q}_{\rm hom}}$ be the maps defined in \eqref{eq:def-q-hom-bar-0-infty}--\eqref{eq:def-q-hom-gamma1-infty}, let \mbox{$u\in W^{2,2}_{R}(\omega;\R^3)$} and let $\Pi^u$ be the map introduced in \eqref{eq:def-Pi}. Then there exists a sequence $\{u^h\}\subset W^{1,2}(\Omega;\R^3)$ such that
\be{eq:limsup-ineq}
\limsup_{h\to 0}\frac{\mathcal{E}^h(u^h)}{h^2}\leq \frac{1}{12}\int_{\omega}\overline{\mathscr{Q}}_{\rm hom}^{\gamma_1}(\Pi^u(x'))\,dx'.
\ee
\end{theorem}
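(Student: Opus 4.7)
The plan is to construct an explicit recovery sequence following the ansatz of \cite[Theorem 2.4]{hornung.neukamm.velcic}, adapted to the present two-scale homogenization framework. First, I would reduce to the case $u\in\mathcal{A}(\omega)$ by invoking the density result Lemma \ref{lemma:approx-B}: indeed, by Corollary \ref{cor:Q-relaxed} the functional $u\mapsto \tfrac{1}{12}\int_{\omega}\overline{\mathscr{Q}}^{\gamma_1}_{\rm hom}(\Pi^u(x'))\,dx'$ is continuous in the strong $W^{2,2}$ topology, and the $\Gamma$-$\limsup$ is lower semicontinuous, so a standard diagonal argument reduces the problem to smooth isometries for which the special decomposition \eqref{eq:star-limsup} is available. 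For such $u$ and for $A(x'):=\Pi^u(x')$, I would invoke Corollary \ref{cor:Q-relaxed} to produce the unique optimal correctors: in the representative case $0<\gamma_1<+\infty$ this is a triple $(B,\phi_1,\phi_2)$ realizing the infimum in \eqref{eq:def-q-hom-bar-gamma1-infty}--\eqref{eq:def-q-hom-gamma1-infty}.

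Next I would define the recovery sequence by the three-scale ansatz
\bmm
u^h(x)&:=u(x')+hx_3\,n_u(x')+h^2\beta^h(x',x_3)\\
&\quad+h\,\ep(h)\,\phi_1^h\Big(x',x_3,\tfrac{x'}{\ep(h)}\Big)+h\,\ep^2(h)\,\phi_2^h\Big(x,\tfrac{x'}{\ep(h)},\tfrac{x'}{\ep^2(h)}\Big),
\end{align*}
where $\beta^h$ is built from $B$ via the decomposition \eqref{eq:star-limsup} (so that its symmetric part encodes the in-plane stretching $\mathrm{sym}\,B$ and its third column carries the out-of-plane correction needed to render the ansatz a rotation-plus-$hE^h$ at leading order), and $\phi_1^h$, $\phi_2^h$ are smooth mollifications of $\phi_1,\phi_2$ preserving periodicity and vanishing averages, truncated so as to be supported away from $\partial\omega$ and near $\{\Pi^u=0\}$ in a way compatible with Lemma \ref{lemma:approx-B}. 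The scaling factors are chosen so that $\nh$ applied to the $\phi_1^h$-term produces $\mathrm{sym}\,(\nabla_y\phi_1|\partial_{x_3}\phi_1/\gamma_1)$ in the dr-$3$-scale limit, while the $\phi_2^h$-term produces $\mathrm{sym}\,(\nabla_z\phi_2|0)$; that is, $\nh u^h=R(x')+h\,R(x')E^h(x)+o(h)$ in $L^\infty$, with $E^h$ converging strongly dr-$3$-scale to the integrand appearing in Theorem \ref{thm:limit stresses} relative to $A=\Pi^u$ and the optimal $(B,\phi_1,\phi_2)$.

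Then, by (H1), $W(\cdot,\cdot,\nh u^h)=W(\cdot,\cdot,\mathrm{Id}+hE^h+o(h))$ pointwise a.e., and a suitable $L^\infty$ construction (truncating $u$ and the correctors on the set where $|hE^h|$ may exceed the threshold $\delta$ of (H3)) guarantees the hypothesis of Lemma \ref{lemma:Q-continuous}(iii). Expanding
$$\frac{W\big(\tfrac{x'}{\ep(h)},\tfrac{x'}{\ep^2(h)},\mathrm{Id}+hE^h+o(h)\big)}{h^2}=\mathscr{Q}\Big(\tfrac{x'}{\ep(h)},\tfrac{x'}{\ep^2(h)},E^h\Big)+o(1)$$
uniformly, and using that strong dr-$3$-scale convergence of $E^h$ passes under the continuous quadratic form (via $\mathscr{Q}(y,z,\cdot)={\mathscr{Q}}(y,z,\mathrm{sym}\,\cdot)$ and an envelope argument), the energy density integrated over $\Omega$ converges to $\intoyz{\mathscr{Q}(y,z,E(x,y,z))}$, which, after using $\iQ\mathscr{Q}(y,z,C+\mathrm{sym}(\nabla_z\phi_2(z)|0))\,dz=\mathscr{Q}_{\rm hom}(y,C)$ and $\int_{-1/2}^{1/2}x_3^2\,dx_3=\tfrac{1}{12}$, equals exactly $\tfrac{1}{12}\int_\omega\overline{\mathscr{Q}}^{\gamma_1}_{\rm hom}(\Pi^u(x'))\,dx'$.

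The main obstacle is the simultaneous handling of the two oscillation scales $\ep(h)$ and $\ep^2(h)$ together with the thickness $h$: the correctors are only Sobolev-regular, so they must be mollified and truncated to pass to $L^\infty$, and the regularization parameters must be chosen as sufficiently slow functions of $h$ so that (i) all error terms produced by the differentiation of $\phi_1^h,\phi_2^h$ via chain rule, with rates $1/\ep(h)$ and $1/\ep^2(h)$, still produce vanishing remainders in $L^2$; (ii) the $L^\infty$-bound needed for the quadratic expansion is respected; and (iii) the dr-$3$-scale limit of $E^h$ is not polluted by boundary or truncation layers. A standard diagonal extraction along these regularization parameters resolves the issue. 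The cases $\gamma_1=+\infty$ and $\gamma_1=0$ require the analogous ansatz with the tuples from Corollary \ref{cor:Q-relaxed}(ii), (iii), the latter involving the additional $W^{2,2}_{\rm per}$ corrector $\eta$ (contributing the $x_3\nabla_y^2\eta$ bending term) and is handled as in \cite{velcic}.
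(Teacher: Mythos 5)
Your overall strategy coincides with the paper's: reduce to $u\in\mathcal{A}(\omega)$ via Lemma \ref{lemma:approx-B} and the continuity of the relaxed functional, take the optimal correctors from Corollary \ref{cor:Q-relaxed}, smooth them by density, build an explicit oscillating ansatz whose linearized strain converges strongly dr-3-scale to the optimal limit field, and conclude by Taylor expansion and Lemma \ref{lemma:mod3scales-lsc}. The corrector terms $h\ep(h)\phi_1^h(\cdot,x'/\ep(h))$ and $h\ep^2(h)\phi_2^h(\cdot,x'/\ep(h),x'/\ep^2(h))$ are scaled exactly as in the paper (up to the pre-multiplication by $R=(\nabla'u|n_u)$, which you will need so that $R^T\nh u^h$ produces $\nabla_y\phi_1$ and $\nabla_z\phi_2$ rather than their rotated versions).

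There is, however, a genuine scaling error in the way you insert the membrane field $B$. You place it at order $h^2$ in the displacement, claiming that the symmetric part of $\nabla'\beta^h$ ``encodes $\mathrm{sym}\,B$.'' But the limit strain is $E^h=(\sqrt{(\nh u^h)^T\nh u^h}-Id)/h$: a displacement perturbation of order $h^2$ contributes $h^2\nabla'\beta^h$ to $\nabla'u^h$, hence only $h\,\mathrm{sym}((\nabla'u)^T\nabla'\beta^h)=O(h)$ to $E^h$, which vanishes in the limit. (Its $x_3$-derivative does survive in the third column, which is why such a term is appropriate for the fields $d$ or $g_i$, but not for the in-plane block.) With your ansatz the limit strain contains $x_3\Pi^u$ but not $\mathrm{sym}\,B$, so you only obtain the upper bound with the infimum over $B$ omitted, which is in general strictly larger than $\overline{\mathscr{Q}}^{\gamma_1}_{\rm hom}(\Pi^u)$. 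The correct construction, as in the paper and in \cite[Theorem 2.4]{hornung.neukamm.velcic}, inserts the membrane correction at order $h$: one sets $v^h=u+h\big((x_3+\alpha)n_u+(g\cdot\nabla')u\big)$ and uses the isometry identities to check that ${\rm sym}\big((\nabla'u)^T\nabla'((g\cdot\nabla')u+\alpha n_u)\big)={\rm sym}\,\nabla'g+\alpha\Pi^u=B$, together with a compensating third-column term (the field $b=-\nabla'\alpha+\Pi^u g$ absorbed into $\tilde\phi_1$). Relatedly, you should justify why the decomposition \eqref{eq:star-limsup} is applicable to the optimal $B$ at all: this requires $B=0$ near $\{\Pi^u=0\}$, which follows from the linearity of the map $A\mapsto B(A)$ in Lemma \ref{lemma:Q-relaxed-continuous}; this step is used explicitly in the paper and is missing from your outline.
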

\begin{proof} The proof is an adaptation of \cite[Proof of Theorem 2.4]{hornung.neukamm.velcic} and \cite[Proof of Theorem 2.4]{velcic}. We outline the main steps in the cases $0<\gamma_1<+\infty$ and $\gamma_1=0$ for the convenience of the reader. The proof in the case $\gamma_1=+\infty$ is analogous.\\

\noindent\textbf{Case 1}: $0<\gamma_1<+\infty$ and $\gamma_2=+\infty$.\\
By Lemma \ref{lemma:approx-B} and Corollary \ref{cor:Q-relaxed} it is enough to prove the theorem for $u\in\mathcal{A}(\omega)$. By Corollary \ref{cor:Q-relaxed} there exist $B\in L^2(\omega;\M^{2\times 2})$, $\phi_1\in L^{2}(\omega; W^{1,2}((-\tfrac12,\tfrac12);W^{1,2}_{\rm per}(Q;\R^3))$, and $\phi_2\in L^{2}(\Omega\times Q;W^{1,2}_{\rm per}(Q;\R^3))$ such that
\begin{align*}
 &\tfrac{1}{12}\int_{\omega}\overline{\mathscr{Q}}_{\rm hom}^{\gamma_1}(\Pi^u(x'))\,dx'\\
 &\quad=\intoyz{\mathscr{Q}\left(y,z,\left(\begin{array}{cc}{\rm sym\,}B(x')+x_3\Pi^u(x')&0\\0&0\end{array}\right)\right.\\
&\qquad\left.+{\rm sym\,}\left(\nabla_y \phi_1(x,y)\left|\frac{\partial_{x_3}\phi_1(x,y)}{\gamma_1}\right.\right)+{\rm sym\,}\big(\nabla_z \phi_2(x,y,z)\big|0\big)\right)}.
\end{align*}
Since $B$ depends linearly on $\Pi^u$ by Lemma \ref{lemma:Q-relaxed-continuous}, in particular there holds
$$\{x':\Pi^u(x')=0\}\subset \{x':\,B(x')=0\}.$$
By Lemma \ref{lemma:Q-relaxed-continuous}, we can argue by density and we can assume that $B\in C^{\infty}(\bar{\omega};\M^{2\times 2})$, $B=0$ in a neighborhood of $\{x':\Pi^u(x')=0\}$, $\phi_1\in C^{\infty}_c(\omega; C^{\infty}((-\tfrac12,\tfrac12); C^{\infty}(Q;\R^3))$, and $\phi_2\in C^{\infty}_c(\omega\times Q; C^{\infty}(Q;\R^3))$. In addition, since $u\in\mathcal{A}(\omega)$, by \eqref{eq:star-limsup} there exist $\alpha\in C^{\infty}(\bar{\omega})$, and $g\in C^{\infty}(\bar{\omega};\R^2)$ such that
$$B={\rm sym}\,\nabla' g+\alpha\Pi^u.$$

Set
\bmm
&v^h(x):=u(x')+h((x_3+\alpha(x'))n_u(x')+(g(x')\cdot\nabla')y(x')),\\
&R(x'):=(\nabla'u(x')|n_u(x')),\\
&b(x'):=-\left(\begin{array}{c}\partial_{x_1}\alpha(x')\\\partial_{x_2}\alpha(x')\end{array}\right)+\Pi^u(x')g(x'),
\end{align*}
and let
$$u^h(x):=v^h(x')+h\ep(h)\tilde{\phi}_1\Big(x,\frac{x'}{\ep(h)}\Big)+h\ep^2(h)\tilde{\phi}_2\Big(x,\frac{x'}{\ep(h)},\frac{x'}{\ep^2(h)}\Big)$$
for a.e. $x\in\Omega$, where
$$\tilde{\phi}_1:=R\Big(\phi_1+\gamma_1x_3\Big(\begin{array}{c}b\\0\end{array}\Big)\Big)\quad\text{and}\quad\tilde{\phi}_2:=R\phi_2.$$
Arguing similarly to \cite[Proof of Theorem 2.4 (upper bound)]{hornung.neukamm.velcic}, it can be shown that \eqref{eq:limsup-ineq} holds.\\

\noindent\textbf{Case 2}: $\gamma_1=0$ \textbf{and }$\gamma_2=+\infty$.\\
By Lemma \ref{lemma:approx-B} and Corollary \ref{cor:Q-relaxed} it is enough to prove the theorem for $u\in\mathcal{A}(\omega)$. By Corollary \ref{cor:Q-relaxed} there exist $B\in L^2(\omega;\M^{2\times 2}_{\rm sym})$, $\xi\in L^2(\Omega;W^{1,2}_{\rm per}(Q;\R^2))$, $\eta\in L^2(\Omega;W^{2,2}_{\rm per}(Q))$, $g_i\in L^2(\Omega\times Y)$, $i=1,2,3$, and $\phi\in L^2(\Omega\times Q; W^{1,2}_{\rm per}(Q;\R^3))$ such that
\begin{align*}
&\frac{1}{12}\int_{\omega}\overline{\mathscr{Q}}_{\rm hom}^{0}(\Pi^u(x'))\,dx'\\
&\quad=\int_{\Omega\times Q\times Q}{\mathscr{Q}}\left(y,z,\left(\begin{array}{cc}x_3\Pi^u(x')+B(x')&0\\0&0\end{array}\right)\right.\\
&\qquad+{\rm sym\,}\left(\begin{array}{cc}{\rm sym }\,\nabla_y\xi(x,y)+x_3\nabla_y^2\eta(x',y)&g_1(x,y)\\
\phantom{g}&g_2(x,y)\\
g_1(x,y)\quad g_2(x,y)&g_3(x,y)\end{array}\right)\\
&\qquad+{\rm sym\,}\left(\left.\nabla_z \phi_2(x,y,z)\right|0\right)\Bigg)\,dz\,dy\,dx.
\end{align*}
By the linear dependence of $B$ on $\Pi^u$, in particular there holds
$$\{x':\Pi^u(x')=0\}\subset \{x':\,B(x')=0\}.$$
By density, we can assume that $B\in C^{\infty}(\bar{\omega};\M^{2\times 2})$, $\xi\in C^{\infty}_c(\omega; C^{\infty}_{\rm per}(Q;\R^2))$, $\eta\in C^{\infty}_c(\omega; C^{\infty}_{\rm per}(Q))$, and $g_i\in C^{\infty}_c(\omega; C^{\infty}_{\rm per}\big(\iv\times Q\big)),$ $i=1,2,3$. Since $u\in\mathcal{A}(\omega)$, by \eqref{eq:star-star-limsup} there exists a displacement $V\in C^{\infty}(\bar{\omega};\R^2)$ such that
$$B={\rm sym}\,((\nabla' u)^T\nabla'V).$$
Set
\begin{align*}
&v^h(x):=u(x')+hx_3n_u(x')=h(V(x')+hx_3\mu(x')),\\
&\mu(x'):=(Id-n_u(x')\otimes n_u(x'))(\partial_{1}V(x')\wedge\partial_2 u(x')+\partial_1 u(x')\wedge\partial_2 V(x')),\\
&R(x'):=(\nabla' u(x')|n_u(x')),
\end{align*}
and let
\begin{align*}
u^h(x)&:=v^h(x)-\ep^2(h)n_u(x')\eta\Big(x',\frac{x'}{\ep(h)}\Big)\\
&\quad+h\ep^2(h)x_3 R(x')\left(
\begin{array}{c}\partial_{x_1}\eta\left(x',\frac{x'}{\ep(h)}\right)+\frac{1}{\ep(h)}\partial_{y_1}\eta\left(x',\frac{x'}{\ep(h)}\right)\\
\partial_{x_2}\eta\left(x',\frac{x'}{\ep(h)}\right)+\frac{1}{\ep(h)}\partial_{y_2}\eta\left(x',\frac{x'}{\ep(h)}\right)\end{array}\right)\\
&\quad+h\ep(h)R(x')\left(\begin{array}{c}\xi\Big(x',\frac{x'}{\ep(h)}\Big)\\0\end{array}\right)\\
&\quad+h^2\int_{-\tfrac12}^{x_3}R(x')g\Big(x',t,\frac{x'}{\ep(h)}\Big)\,dt+h\ep^2(h)R(x')\phi\Big(x,\frac{x'}{\ep(h)},\frac{x'}{\ep^2(h)}\Big),
\end{align*}
for a.e. $x\in\Omega$. The proof of \eqref{eq:limsup-ineq} is a straightforward adaptation of \cite[Proof of Theorem 2.4 (Upper bound)]{velcic}.
\end{proof}
\begin{proof}[Proof of Theorem \ref{thm:main-result}]
Theorem \ref{thm:main-result} follows now by Theorem \ref{thm:liminf} and Theorem \ref{thm:limsup}.
\end{proof}

\section{Appendix}
In this section we collect a few results which played an important role in the proof of Theorem \ref{thm:main-result}. We recall that in Case 2, we claimed that the maps $R^h$ are piecewise constant on cubes of the form $Q(\ep(h)z,\ep(h))$, $z\in\Z^2$. Indeed, this holds if we show that for every $z\in\Z^2$ there exists $z'\in\Z^2$ such that
$$Q(\ep(h)z,\ep(h))\subset Q(\delta(h)z',\delta(h))$$
or, equivalently, with $m:=\frac{\delta(h)}{\ep(h)}\in\N$,
\be{eq:right-inclusion}\Big(z-\frac12,z+\frac12\Big)\subset m\Big(z'-\frac12,z'+\frac12\Big).\ee
The next lemma attests that this holds provided $m$ is odd.
\begin{lemma} 
\label{lemma:integers-appendix}
Let $a\in\N_0$. Then for every $z\in\Z$ there exists $z'\in\Z$ such that
\eqref{eq:right-inclusion} holds with $m=2a+1$.
\end{lemma}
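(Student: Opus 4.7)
My plan is to rewrite the desired inclusion as an arithmetic condition and then invoke the symmetric division algorithm for the odd modulus $m = 2a+1$. Unfolding the definitions, the inclusion $(z - \tfrac12, z + \tfrac12) \subset m(z' - \tfrac12, z' + \tfrac12) = (mz' - \tfrac{m}{2}, mz' + \tfrac{m}{2})$ is equivalent to the two inequalities
\[
mz' - \tfrac{m}{2} \le z - \tfrac12 \quad\text{and}\quad z + \tfrac12 \le mz' + \tfrac{m}{2},
\]
which together are equivalent to $|mz' - z| \le \tfrac{m-1}{2} = a$. So the lemma reduces to showing that for every $z \in \Z$ there exists $z' \in \Z$ with $|z - mz'| \le a$.

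For this, I would use the fact that since $m = 2a+1$ is odd, the set $\{-a, -a+1, \dots, a-1, a\}$ is a complete set of residues modulo $m$. Hence, for any $z \in \Z$, one can write uniquely $z = m z' + r$ with $z' \in \Z$ and $r \in \{-a, \dots, a\}$; this $z'$ satisfies $|z - mz'| = |r| \le a$, and so $(z - \tfrac12, z + \tfrac12) \subset m(z' - \tfrac12, z' + \tfrac12)$, as required.

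Concretely, one can exhibit $z'$ as $z' := \bigl\lfloor (z + a)/m \bigr\rfloor$: writing $z + a = m q + s$ with $q \in \Z$ and $s \in \{0, 1, \dots, m-1\}$, we get $z = m q + (s - a)$ with $s - a \in \{-a, \dots, a\}$, so we may take $z' = q$. This is essentially a one-line argument once the inclusion has been translated, and there is no substantial obstacle; the only care required is to ensure that the residue system used is symmetric about zero, which is possible precisely because $m$ is odd, i.e., because $\delta(h)/\ep(h)$ was chosen to be of the form $2\lceil h/\ep(h) \rceil + 1$ in Case 2 of Theorem \ref{thm:limit stresses}.
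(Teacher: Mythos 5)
Your proof is correct and follows essentially the same route as the paper's: both reduce the interval inclusion to the arithmetic condition $|z - mz'| \le a$ and then apply the division algorithm. The only difference is cosmetic: you invoke the symmetric residue system $\{-a,\dots,a\}$ modulo $m=2a+1$ directly (with the clean closed form $z'=\lfloor (z+a)/m\rfloor$), whereas the paper uses the standard remainder $l\in\{0,\dots,2a\}$ and splits into the cases $l\le a$ and $l>a$, choosing $z'=n$ or $z'=n+1$ accordingly; your packaging avoids the case split but encodes the same fact.
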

\begin{proof}
Without loss of generality we may assume that $z\in\N_0$ (the case in which $z<0$ is analogous). Solving \eqref{eq:right-inclusion} is equivalent to finding $z'\in\Z$ such that
\be{eq:appendix-star}
\begin{cases}
z-\frac12\geq (2a+1)z'-\frac{(2a+1)}{2},&\\
z+\frac12\leq (2a+1)z'+\frac{(2a+1)}{2},&
\end{cases}
\ee
that is
\be{eq:appendix-star-star}
\begin{cases}
z\geq (2a+1)z'-a,&\\
z\leq (2a+1)z'+a.&
\end{cases}
\ee
Let $n,l\in\N_0$ be such that $z=n(2a+1)+l$ and
\be{eq:appendix-point}
l<2a+1.
\ee
Then \eqref{eq:appendix-star-star} is equivalent to
\be{eq:appendix-last}
\begin{cases}
n(2a+1)+l+a\geq (2a+1)z',&\\
n(2a+1)+l-a\leq (2a+1)z'.&
\end{cases}
\ee
Now, if $0\leq l\leq a$ it is enough to choose $z'=n$. If $l>a$, the result follows setting $z':=n+1$. Indeed, with $a+1>r>1\in\N$ such that $l=a+r$, \eqref{eq:appendix-last} simplifies as
$$\begin{cases}
n(2a+1)+2a+r\geq (2a+1)(n+1),&\\
n(2a+1)+r\leq (2a+1)(n+1),&
\end{cases}$$
that is
$$\begin{cases}
2a+r\geq 2a+1&\\
r\leq 2a+1,
\end{cases}$$
which is trivially satisfied.
\end{proof}
\begin{remark}
\label{rk:case3}
By Lemma \ref{lemma:integers-appendix} it follows that, setting $p:=\frac{\delta(h)}{\ep^2(h)}$ and provided $p$ is odd, for every $z\in\Z^2$ there exists $z'\in\Z^2$ such that
$$Q(\ep^2(h)z,\ep^2(h))\subset Q(\delta(h)z,\delta(h)).$$
This observation allowed us to construct the sequence $\{R^h\}$ in Case 3 of the proof of Theorem \ref{thm:main-result}.
\end{remark}
\begin{remark}
We point out that if $m$ is even there may be $z\in\Z$ such that \eqref{eq:right-inclusion} fails to be true for every $z'\in\Z$,  i.e.
$$\Big(z-\frac12,z+\frac12\Big)\not\subseteq \Big(mz'-\frac{m}{2},mz'+\frac{m}{2}\Big).$$
Indeed, if $m$ is even, then $z=\tfrac32 m\in\N$ and \eqref{eq:appendix-star} becomes
$$\begin{cases}
\frac32 m-\frac12\geq mz'-\frac{m}{2}&\\
\frac32 m+\frac12\leq mz'+\frac{m}{2},&
\end{cases}$$
which in turn is equivalent to
$$z'\in\Big[1+\frac{1}{2m},2-\frac{1}{2m}\Big].$$
This last condition leads to a contradiction as
$$\Big[1+\frac{1}{2m},2-\frac{1}{2m}\Big]\cap\Z=\emptyset\quad\text{for every } m\in\N.$$
\end{remark}
We conclude the Appendix with a result that played a key role in the identification of the limit elastic stress, and in the proof of the liminf and limsup inequalities \eqref{eq:intro-liminf} and \eqref{eq:intro-limsup}. We omit its proof, as it follows by \cite[Lemma 4.3]{neukamm.velcic}.
\begin{lemma}
\label{lemma:mod3scales-lsc}
Let ${\mathscr{Q}}:\R^2\times\R^2\times \mthree\to[0,+\infty)$ be such that
\begin{enumerate}
\item[(i)] ${\mathscr{Q}}(y,z,\cdot)$ is continuous for a.e. $y,z\in \R^2$,
\item[(ii)] ${\mathscr{Q}}(\cdot,\cdot,F)$ is $Q\times Q$-periodic and measurable for every $F\in\mthree$,
\item[(iii)] for a.e. $y,z\in\R^2$, the map ${\mathscr{Q}}(y,z,\cdot)$ is quadratic on $\mthree_{\rm sym}$, and satisfies
$$\frac{1}{C}|{\rm sym F}|^2\leq {\mathscr{Q}}(y,z,F)={\mathscr{Q}}(y,z,{\rm symF})\leq C|{\rm sym F}|^2$$
for all $F\in\mthree$, and some $C>0$.
\end{enumerate}

 Let $\{E^h\}\subset L^2(\Omega;\mthree)$ and $E\in L^2(\Omega\times Q\times Q;\mthree)$ be such that
$$E^h\wks E\quad\text{weakly dr-3-scale}.$$
Then
$$\liminf_{h\to 0}\int_{\Omega}{\mathscr{Q}}\Big(\frac{x'}{\ep(h)},\frac{x'}{\ep^2(h)}, E^h(x)\Big)\,dx\geq \intoyz{\mathscr{Q}(y,z,E(x,y,z))}.$$
If in addition 
$$E^h\stts E\quad\text{strongly dr-3-scale},$$
then
$$\lim_{h\to 0}\int_{\Omega}{\mathscr{Q}}\Big(\frac{x'}{\ep(h)},\frac{x'}{\ep^2(h)}, E^h(x)\Big)\,dx= \intoyz{\mathscr{Q}(y,z,E(x,y,z))}.$$
\end{lemma}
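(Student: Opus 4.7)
The plan is to exploit the convexity of ${\mathscr Q}(y,z,\cdot)$ via a standard duality argument adapted to dr-3-scale convergence, and then upgrade liminf to limit under the strong convergence hypothesis using the quadratic upper bound in (iii). Let $B(y,z,\cdot,\cdot)$ denote the symmetric bilinear form associated with ${\mathscr Q}(y,z,\cdot)$, so that ${\mathscr Q}(y,z,F)=B(y,z,F,F)$ and, by (iii) and Cauchy-Schwarz, $|B(y,z,F,G)|\leq C|\mathrm{sym}\,F||\mathrm{sym}\,G|$ for a.e. $y,z\in\R^2$.

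For the liminf inequality, fix a smooth test function $M\in C_c^\infty(\Omega;C_{\rm per}(Q\times Q;\mthree))$ and set $M^h(x):=M(x,x'/\ep(h),x'/\ep^2(h))$. From the algebraic identity ${\mathscr Q}(F)={\mathscr Q}(F-G)+2B(F-G,G)+{\mathscr Q}(G)$ and the nonnegativity of ${\mathscr Q}$, one obtains pointwise
\[
{\mathscr Q}\Big(\tfrac{x'}{\ep(h)},\tfrac{x'}{\ep^2(h)},E^h\Big)\geq 2 B\Big(\tfrac{x'}{\ep(h)},\tfrac{x'}{\ep^2(h)},E^h-M^h,M^h\Big)+{\mathscr Q}\Big(\tfrac{x'}{\ep(h)},\tfrac{x'}{\ep^2(h)},M^h\Big).
\]
Integrating and passing to the limit $h\to 0$, the last term converges to $\iOQ\iQ {\mathscr Q}(y,z,M(x,y,z))\,dz\,dy\,dx$ by a standard Riemann-Lebesgue-type argument on smooth admissible test functions. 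The cross term converges to $2\iOQ\iQ B(y,z,E-M,M)\,dz\,dy\,dx$ by pairing the weak dr-3-scale convergence $E^h-M^h\wks E-M$ against the admissible test function $(x,y,z)\mapsto B(y,z,\cdot,M(x,y,z))$, which is measurable and $Q\times Q$-periodic in $(y,z)$ by (ii) and smooth in $x$.

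Choosing $M=M_n$ converging to $E$ strongly in $L^2(\Omega\times Q\times Q;\mthree)$ (by density of smooth test functions) and passing to the limit in $n$, the right-hand side tends to $\iOQ\iQ {\mathscr Q}(y,z,E(x,y,z))\,dz\,dy\,dx$, proving the liminf. For the strong case, the identity $\|E^h-M^h\|_{L^2(\Omega)}^2=\|E^h\|^2-2\int_\Omega E^h:M^h\,dx+\|M^h\|^2$ together with the strong dr-3-scale convergence $\|E^h\|\to\|E\|$, the weak convergence applied to the middle term, and Riemann-Lebesgue for $M^h$ give $\|E^h-M^h\|_{L^2(\Omega)}^2\to\|E-M\|_{L^2(\Omega\times Q\times Q)}^2$. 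Using ${\mathscr Q}(\cdot,\cdot,F)\leq C|F|^2$ from (iii), we estimate
\[
\limsup_{h\to 0}\int_\Omega{\mathscr Q}\Big(\tfrac{x'}{\ep(h)},\tfrac{x'}{\ep^2(h)},E^h-M^h\Big)\,dx\leq C\|E-M\|_{L^2(\Omega\times Q\times Q)}^2,
\]
so the same identity ${\mathscr Q}(E^h)={\mathscr Q}(E^h-M^h)+2B(E^h-M^h,M^h)+{\mathscr Q}(M^h)$, integrated and sent to the limit, yields $\limsup\int{\mathscr Q}(\cdot,\cdot,E^h)\leq C\|E-M\|^2+2\iOQ\iQ B(y,z,E-M,M)+\iOQ\iQ {\mathscr Q}(y,z,M)$. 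Letting $M\to E$ in $L^2(\Omega\times Q\times Q;\mthree)$ collapses the right-hand side to $\iOQ\iQ {\mathscr Q}(y,z,E(x,y,z))\,dz\,dy\,dx$.

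The main technical hurdle is ensuring that the cross-term map $(x,y,z)\mapsto B(y,z,\cdot,M(x,y,z))$ is an admissible dr-3-scale test function despite $B$ being merely measurable in $(y,z)$; this is handled by first-proving the inequality for tensor-product test functions $M(x,y,z)=\varphi(x)\psi(y,z)N$ with $N\in\mthree$, using the measurability and periodicity from (ii), and then extending by linearity and density to general smooth $M$. Everything else is a routine translation of the proof of \cite[Lemma 4.3]{neukamm.velcic} to the dr-3-scale framework, exploiting that dr-3-scale convergence is simply a specialized form of classical 3-scale convergence as noted in the remark following Definition~\ref{def:dr}.
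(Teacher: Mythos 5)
The paper itself does not prove this lemma; it simply cites \cite[Lemma 4.3]{neukamm.velcic} and omits the argument, so your attempt supplies a proof where the paper has none. Your overall strategy -- expand the quadratic form around a smooth test field $M$ via ${\mathscr Q}(F)={\mathscr Q}(F-M)+2B(F-M,M)+{\mathscr Q}(M)$, drop the nonnegative term for the liminf, and use the $L^2$-approximation $\|E^h-M^h\|^2\to\|E-M\|^2$ to get the reverse inequality in the strong case -- is the standard convexity/duality proof and is correct in structure, including the final limit $M\to E$.

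There is, however, a flaw in how you propose to resolve the technical hurdle you rightly identify. You say the measurable-coefficient issue is handled ``by first proving the inequality for tensor-product test functions $M(x,y,z)=\varphi(x)\psi(y,z)N$ and then extending by linearity and density to general smooth $M$.'' Decomposing $M$ into tensor products does not help: even with $M=\varphi\psi N$, the cross-term integrand is $\varphi(x)\psi(y,z)L_N(y,z)$ where $L_N(y,z)$ is the (merely bounded, measurable) matrix representing $F\mapsto B(y,z,F,N)$, so the measurability in $(y,z)$ persists unchanged. The genuine resolution is instead that for an $L^2$-bounded sequence weakly dr-3-scale converging to a limit, the admissible test functions can be enlarged from $C_c^\infty(\Omega;C_{\rm per}(Q\times Q))$ to $\varphi(x)\chi(y,z)$ with $\varphi\in C_c^\infty(\Omega)$ and $\chi\in L^\infty_{\rm per}(Q\times Q)$ (or even $L^2_{\rm per}$): approximate $\chi$ by $\chi_n\in C_{\rm per}(Q\times Q)$ in $L^2(Q\times Q)$, note that the error term
\[
\int_\Omega (E^h-M^h)(x):\varphi(x)(\chi-\chi_n)\Big(\tfrac{x'}{\ep(h)},\tfrac{x'}{\ep^2(h)}\Big)\,dx
\]
is bounded, via Cauchy--Schwarz and Riemann--Lebesgue, by $C\sup_h\|E^h-M^h\|_{L^2}\,\|\chi-\chi_n\|_{L^2(Q\times Q)}$, and send $h\to 0$ before $n\to\infty$. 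The same density argument justifies the Riemann--Lebesgue convergence of the last term $\int{\mathscr Q}(x'/\ep,x'/\ep^2,M^h)\,dx$ whose integrand has bounded measurable dependence on $(y,z)$. With this correction your proof is complete and is essentially the argument the cited reference carries out in the two-scale setting.
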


\section{Concluding Remarks}
The rigorous identification of two-dimensional models for thin three-dimensional structures is a classical question in mechanics of materials. Recently, in \cite{hornung.neukamm.velcic}, \cite{neukamm.velcic} and \cite{velcic}, simultaneous homogenization and dimension reduction for thin plates has been studied, under physical growth conditions for the energy density, and in the situation in which one periodic in-plane homogeneity scale arises.

In this paper we deduced a multiscale version of \cite{hornung.neukamm.velcic} and \cite{velcic}, extending the analysis to the case in which two periodic in-plane homogeneity scale are present, in the framework of Kirchhoff's nonlinear plate theory. Denoting by $h$ the thickness of the plate, and by $\ep(h)$ and $\ep^2(h)$ the two periodicity scales, we provided a characterization of the effective energy in the regimes
$$\lim_{h\to 0}\frac{h}{\ep(h)}:=\gamma_1\in[0,+\infty]\quad\text{and}\quad\lim_{h\to 0}\frac{h}{\ep^2(h)};=\gamma_2=+\infty.$$
The analysis relies on multiscale convergence methods and on a careful study of the multiscale limit of the sequence of linearized three-dimensional stresses, based on Friesecke, James and M\"uller's rigidity estimate (\cite[Theorem 4.1]{friesecke.james.muller}).

The identification of the reduced models for $\gamma_1=0$ and $\gamma_2\in[0,+\infty)$ remains an open problem.
\section*{acknowledgements}
The authors thank the Center for Nonlinear Analysis (NSF Grant No. DMS-0635983), where this research was carried out. The research of L. Bufford,
E. Davoli, and I. Fonseca was funded by the National Science Foundation under Grant No. DMS-
0905778. L. Bufford and I. Fonseca were also supported by the National Science Foundation under Grant No. DMS-1411646. E. Davoli and I. Fonseca
 acknowledge support of the National Science Foundation under the PIRE Grant No.
OISE-0967140.


\begin{thebibliography}{90}          

\bibitem{acerbi.buttazzo.percivale}
E. Acerbi, G. Buttazzo, D. Percivale, 
A variational definition for the strain energy of an elastic string, 
\textit{J. Elasticity} \textbf{25} (1991) no.2, 137--148.

\bibitem{allaire}
G. Allaire, {Homogenization and two-scale convergence},
   \textit{SIAM J. Math. Anal.} \textbf{23} {(1992)} no.6, 1482--1518.


\bibitem{allaire.briane}
{G. Allaire, M. Briane}, {Multiscale convergence and reiterated homogenisation},
\textit{Proc. Roy. Soc. Edinburgh Sect. A} \textbf{126} {(1996)} no.2, {297--342}.

\bibitem{babadjian.baia}
J.F. Babadjian, M. Ba{\'{\i}}a, 3D-2D analysis of a thin film with periodic microstructure, \textit{Proc. Roy. Soc. Edinburgh Sect. A} \textbf{136} (2006) no.2, 223--243.

\bibitem{baia.fonseca}
M. Ba{\'{\i}}a, I. Fonseca,The limit behavior of a family of variational multiscale problems, \textit{Indiana Univ. Math. J.} \textbf{56} (2007) no.1, 1--50.
      

\bibitem{braides}
A. Braides, Homogenization of some almost periodic coercive functionals, \textit{Rend. Naz. Accad. Sci. XL. Mem. Mat.} \textbf{5} (1985), 313--322.

\bibitem{braides.fonseca.francfort}
A. Braides, I. Fonseca. G.A. Francfort, 3D-2D asymptotic analysis for inhomogeneous thin films, \textit{Indiana Univ. Math. J.} \textbf{49} (2000), 1367--1404.

\bibitem{friesecke.james.muller} 
G. Friesecke, R.D. James, S. M\"uller, 
A theorem on geometric rigidity and the derivation of nonlinear plate theory from three-dimensional elasticity, 
\textit{Comm. Pure Appl. Math.} \textbf{55} (2002), 1461--1506.

\bibitem{friesecke.james.muller2} 
G. Friesecke, R.D. James, S. M\"uller, 
A hierarchy of plate models derived from nonlinear elasticity by Gamma-convergence, \textit{Arch. Rational Mech. Anal.} \textbf{180} (2006) no.2, 183--236.

\bibitem{ferreira.fonseca}
R. Ferreira, I. Fonseca, Reiterated homogenization in {$BV$} via multiscale convergence, \textit{SIAM J. Math. Anal.} \textbf{44} (2012) no. 3, 2053--2098.

\bibitem{ferreira.fonseca2}
R. Ferreira, I. Fonseca, Characterization of the multiscale limit associated with bounded sequences in {$BV$}, \textit{J. Convex Anal.} \textbf{19} (2012) no. 2, 403--452.

\bibitem{fonseca.leoni}
I. Fonseca, G. Leoni, Modern Methods in the Calculus of Variations: $L^p$ spaces, Springer, New York (2007).

\bibitem{fonseca.zappale}
I. Fonseca, E. Zappale, Multiscale relaxation of convex functionals, \textit{J. Convex Anal.} \textbf{10} (2003) no.2, 325--350.

\bibitem{hornung2}
P. Hornung, Fine level set structure of flat isometric immersions, \textit{Arch. Rational Mech. Anal.} \textbf{199} (2011) no.3, 943--1014.

\bibitem{hornung}
P. Hornung,
Approximation of flat $W^{2,2}$ isometric immersions by smooth ones, \textit{Arch. Rational Mech. Anal.} \textbf{199} (2011) no.3, 1015--1067.

\bibitem{hornung.lewicka.pakzad}
P. Hornung, M. Lewicka, and R. Pakzad, Infinitesimal isometries on developable surfaces and asymptotic theories for thin developable shells, \textit{J. Elasticity} \textbf{111} (2013) no.1, 1--19.

\bibitem{hornung.neukamm.velcic}
P. Hornung, S. Neukamm, I. Vel\v{c}i\`c,
Derivation of a homogenized nonlinear plate theory from 3d elasticity, \textit{Calc. Var. Partial Differential Equations} \textbf{51} (2014) no. 3--4, 677--699.

\bibitem{ledret.raoult} 
H. Le Dret, A. Raoult, 
The nonlinear membrane model as variational limit of nonlinear three-dimensional elasticity, \textit{J. Math. Pures Appl.} \textbf{74} (1995), 549--578.

\bibitem{lukkassen.nguetseng.wall}
{D. Lukkassen, G. Nguetseng, and P. Wall}, {Two-scale convergence}, \textit{Int. J. Pure Appl. Math.} \textbf{2} {(2002)}, {35--86}.

\bibitem{muller}
S. M\"uller, Homogenization of nonconvex integral functionals and cellular elastic materials, \textit{Arch. Ration. Mech. Anal.}, \textbf{99} (1987), 189--212.

\bibitem{nguetseng}
G. Nguetseng, {A general convergence result for a functional related to the
              theory of homogenization},\textit{SIAM J. Math. Anal.}, \textbf{20} {(1989)}, {608--623}.

\bibitem{neukammPHD}
S. Neukamm, \emph{Homogenization, linearization and dimension reduction in elasticity with variational methods.} PhD thesis, Technische Universit\"at M\"unchen, 2010.

\bibitem{neukamm}
S. Neukamm, \emph{Rigorous derivation of a homogenized bending-torsion theory for inextensible rods from three-dimensional elasticity}, \textit{Arch. Ration. Mech. Anal.} \textbf{206} (2012), 645--706.

\bibitem{neukamm.velcic}
S. Neukamm, I. Vel\v{c}i\`c,
Derivation of a homogenized von-K\`arm\`an plate theory from 3D nonlinear elasticity, \textit{Math. Models Methods Appl. Sci.} \textbf{23} (2013), 2701--2748. 

\bibitem{pakzad}
R. Pakzad, On the Sobolev space of isometric immersions, \textit{J. Differential Geom.} \textbf{66} (2004), 47--69.

\bibitem{schmidt}
B. Schmidt, Plate theory for stressed heterogeneous multilayers of finite bending energy, \textit{J. Math. Pures Appl.} \textbf{88} (2007), 107--122.

\bibitem{velcic}
I. Vel\v{c}i\`c, A note on the derivation of homogenized bending plate model,
arXiv:1212.2594v2[math.AP]

\end{thebibliography}
\end{document}